\documentclass[11pt]{article}
\usepackage{color}

\usepackage{amssymb}   
\usepackage{amsthm}    
\usepackage{amsmath}   
\usepackage{stmaryrd}  
\usepackage{titletoc}  
\usepackage{mathrsfs}  
\usepackage{graphicx}
\usepackage{enumitem}
\vfuzz2pt 
\hfuzz2pt 

\newlength{\defbaselineskip}
\setlength{\defbaselineskip}{\baselineskip}
\newcommand{\setlinespacing}[1]%
           {\setlength{\baselineskip}{#1 \defbaselineskip}}

\theoremstyle{plain}
\newtheorem{thm}{Theorem}[section]

\newtheorem{lem}[thm]{Lemma}
\newtheorem{prop}[thm]{Proposition}
\newtheorem{exam}[thm]{Example}

\theoremstyle{definition}
\newtheorem{defn}{Definition}[section]

\newtheorem{rmk}{Remark}[section]

\newcommand{\eps}{\varepsilon}

\DeclareMathOperator*{\esssup}{esssup}
\DeclareMathOperator*{\essinf}{essinf}

\newcommand{\cO}{\mathcal{O}}
\newcommand{\cH}{\mathcal{H}}

\newcommand{\cL}{\mathcal{L}}
\newcommand{\cT}{\mathcal{T}}

\newcommand{\cA}{\mathcal{A}}
\newcommand{\cS}{\mathcal{S}}

\newcommand{\cG}{\mathcal{G}}

\newcommand{\cU}{\mathcal{U}}

\newcommand{\bH}{\mathbb{H}}
\newcommand{\bV}{\mathbb{V}}

\newcommand{\bP}{\mathbb{P}}
\newcommand{\bB}{\mathbb{B}}
\newcommand{\bR}{\mathbb{R}}
\newcommand{\bN}{\mathbb{N}}

\newcommand{\sF}{\mathscr{F}}

\textwidth =16cm \topmargin =-18mm \textheight =23.5cm \oddsidemargin=5pt
\evensidemargin=0pt

\makeatletter\@addtoreset{equation}{section} \makeatother
 \allowdisplaybreaks
\begin{document}

\title{Optimal control of infinite-dimensional differential systems with randomness and path-dependence and stochastic path-dependent Hamilton-Jacobi equations\footnotemark[1] 
}

\author{Jinniao Qiu\footnotemark[2]  \and Yang Yang\footnotemark[2]
}
\footnotetext[1]{This work was partially supported by the National Science and Engineering Research Council of Canada (NSERC). Yang was partially supported by a graduate scholarship through the NSERC-CREATE Program on Machine Learning in quantitative Finance and Business Analytics (Fin-ML CREATE). The authors also acknowledge the support of the Banff International Research Station (BIRS) for the Focused Research Group [22frg198] ``Novel perspectives in kinetic equations for emerging phenomena", July 17-24, 2022, where part of this work was done. In addition, part of the work was revised during Qiu's visit to Universit\'e Paris Dauphine partially supported by the 2022 PIMS-Europe Fellowship and both the hospitality and supports are gratefully acknowledged.}
\footnotetext[2]{Department of Mathematics \& Statistics, University of Calgary, 2500 University Drive NW, Calgary, AB T2N 1N4, Canada. \textit{E-mail}: \texttt{jinniao.qiu@ucalgary.ca} (J. Qiu), \texttt{yang.yang1@ucalgary.ca} (Y. Yang)
.}

\maketitle

\begin{abstract}
This paper is devoted to the stochastic optimal control problem of infinite-dimensional differential systems allowing for both path-dependence and measurable randomness. As opposed to the deterministic path-dependent cases studied by Bayraktar and Keller [J. Funct. Anal. 275 (2018), 2096--2161], the value function turns out to be a random field on the path space and it is characterized by a stochastic path-dependent Hamilton-Jacobi (SPHJ) equation. A notion of viscosity solution is proposed and the value function is proved to be the unique viscosity solution to the associated SPHJ equation.
\end{abstract}

{\bf Mathematics Subject Classification (2010):}  49L20, 49L25, 93E20, 35D40, 60H15

{\bf Keywords:} stochastic path-dependent Hamilton-Jacobi equation, stochastic optimal control, viscosity solution, backward stochastic partial differential equation

\section{Introduction}
The purpose of this paper is to characterize the value function of a certain class of path-dependent stochastic optimal control problems in an infinite dimensional setting as the unique solution to the corresponding stochastic path-dependent Hamilton-Jacobi (SPHJ) equation. These equations in finite dimensional set up have been studied by Qiu (\cite{qiu2022controlled}). However, our extension to the infinite dimensional spaces is nontrivial.

Let $(\Omega,\sF,\{\sF_t\}_{t\geq0},\bP)$ be a complete filtered probability space. The filtration $\{\sF_t\}_{t\geq0}$ satisfies the usual conditions and is generated by an $m$-dimensional Wiener process $W=\{W(t):t\in[0,\infty)\}$ together with all the $\bP$-null sets in $\sF$. Let $\bV\subseteq \bH\subseteq \bV^*$ be a Gelfand triple, where $\bV$ is a separable reflexive Banach space with a continuous, dense, and compact embedding into a separable Hilbert space $\bH$.

Throughout this work,  the number $T\in (0,\infty)$ denotes a fixed deterministic terminal time and the set $C([0,T];\bH)$ represents the space of $\bH$-valued continuous functions on $[0,T]$.  For each $x\in C([0,T];\bH)$, denote by $x_t$ its restriction to time interval $[0,t]$ for each $t\in [0,T]$ and by $x(t)$ its value at time $t\in[0,T]$. 

Consider the following stochastic optimal control problem
\begin{align}
\min_{\theta\in\cU}E\left[\int_0^T\!\! f(s,X_s,\theta(s))\,ds +G(X_T) \right], \label{Control-probm}
\end{align}
subject to
\begin{equation}\label{state-proces-contrl}
\left\{
\begin{split}
&\frac{dX(t)}{dt}=AX(t)+\beta(t,X_t,\theta(t)),\,\,
\,t\geq 0; \\
& X_0=x_0\in \bH.
\end{split}
\right.
\end{equation}
Here, we denote by $\cU$ the set of all the $U$-valued and $\{\sF_t\}_{t\in [0,T]}$-adapted processes with $U$ being a nonempty compact set. The process $(X(t))_{t\in[0,T]}$ is the {\sl state process}, governed by the {\sl control} $\theta\in\cU$. The notation $X^{r,x_r;\theta}(t)$ for $0\leq r\leq t\leq T$ may be used to indicate the dependence of the state process on the control $\theta$, the initial time $r$, and initial path $x_r$. Here $A: \bV\to \bV^*$ is a linear time-constant operator\footnote{Under certain conditions, our results may be extended to operators $A$ that are time and path-dependent, and even nonlinear. To avoid cumbersome arguments, we  consider a time-constant linear operator $A$ herein.} and $\beta(t,X_t,\theta(t))$ takes values in $\bH$. For the well-posedness of the state system, we apply the following assumptions on operator $A$.
\begin{enumerate}[label=(\roman*)]
\item (Coercivity) There exists $c_1\in\bR$, $c_2\in\bR^+$ such that for all $v\in \bV$,
\begin{equation*}
2\cdot {_{\bV^*}\langle} Av,v \rangle_{\bV} \le c_1 \| v \|_{\bH}^2 - c_2 \| v \|_{\bV}^2.
\end{equation*}
\item (Boundedness) There exists $c_3 \ge 0$ such that for all $v\in \bV$,
\begin{equation*}
\|Av\|_{\bV^*} \le c_3 \|v\|_{\bV}.
\end{equation*}
\end{enumerate}


In this paper, we consider the non-Markovian cases where the coefficients $\beta,f$ may depend not only on time and control but also \textit{explicitly} on $\omega \in\Omega$ and  the path/history of the state process. The function $G$ is random and path-dependent as well. Such problems arise naturally from controlled partial differential equations allowing for path-dependent and random coefficients. An example is sketched as follows.

\begin{exam}\label{example}
Let $\mathcal{O}\subseteq \bR^d$ be a bounded domain with smooth boundary $\partial\mathcal{O}$. Denote by $W_0^{k,p}(\mathcal{O})$ the $k$-th order Sobolev spaces on $\mathcal{O}$ with elements vanishing at $\partial\mathcal{O}$, for $k\in\mathbb{Z}$, $p\in (1,\infty)$. We consider the following control problem
\begin{align*}
\min_{\theta\in\cU}E\left[\int_0^T\!\!  \tilde{f}(s, \tilde{X}_s,\theta(s))\,ds + \tilde{G}(\tilde{X}_T) \right]\,, \label{Control-probm-exam2}
\end{align*}
subject to
\begin{equation*}\label{state-proces-contrl-exam2}
 { \tilde{X}(t)}=   x_0 + \int_0^t (\Delta \tilde{X}(s)+ \tilde{\beta}(s, \tilde{X}_s,\theta(s)))\,ds + \tilde{\eta}(t)\,\,
\,t\geq 0,  x_0\in \bH,
\end{equation*}
where $\bV = W_0^{1,2}(\mathcal{O})$ is dense and compactly embedded into $\bH = L_0^2(\mathcal{O})$, and $\bV^*=W_0^{-1,2}(\mathcal{O})$. $\tilde{f}$ and $\tilde{G}$ are functions taking values in $\mathbb{R}$. $\tilde{\beta}$ is taking values in $\bH$, while $\tilde{\eta}$ may be any $(\sF_t)_{t\ge 0}$-adapted $W_0^{2,2}(\mathcal{O})$-valued stochastic process with or without rough paths, including integrals with respect to Wiener processes, fractional Brownian motions, and general semimartingales, and so on. And the Laplace operator $\Delta$ is linearly mapping $W_0^{1,2}(\mathcal{O})$ onto $W_0^{-1,2}(\mathcal{O})$. Set $X(t) = \tilde{X}(t) - \tilde{\eta}(t)$, for any $t\in[0,T]$. The control problem above could be written equivalently as \eqref{Control-probm}-\eqref{state-proces-contrl}, while the associated coefficients $(f,\beta)(s, X_s, \theta(s)) = (\tilde{f}, \tilde{\beta})(s, (X+\tilde{\eta})_s, \theta(s))$ and $G(X_T) = \tilde{G}((X+\tilde{\eta})_T)$ are obviously random.

\end{exam}

\text{ }

Back to the control problem (1.1) - (1.2), we define the dynamic cost functional:
\begin{equation}\label{cost}
J(t, x_t; \theta) = E_{\sF_t} \left[  \int_t^T \!f(s, X_s^{t, x_t; \theta}, \theta(s))ds + G(X_T^{t, x_t; \theta})  \right], \text{ } t\in[0, T],
\end{equation}
 and the value function $V$ is given by
 \begin{equation}\label{value}
 V(t,x_t) = \essinf_{\theta\in \mathcal{U}} J(t, x_t; \theta), \text{ } t\in[0, T].
 \end{equation}
 
 \text{ }
 
Due to the randomness and path-dependence of the coefficients, the value function $V(t, x_t)$ is a function of time $t$, path $x_t$, and $\omega\in\Omega$. In this work, it is in fact proven to be the unique viscosity solution to the following stochastic path-dependent Hamilton-Jacobi (SPHJ) equation:
\begin{equation}\label{SPHJ}
\left\{
\begin{split}
- \mathfrak{d}_t u(t, x_t) - \mathcal{H}(t, x_t, \nabla u(t, x_t)) &= 0, \text{ }\text{ } (t, x_t)\in [0, T)\times C([0, T]; \bH), 
\\
u(T, x_T) &= G(x_T), \text{ }\text{ } x_T\in C([0, T]; \bH), 
\end{split}
\right.
\end{equation}
with
\begin{equation}\label{Hamilton}
\mathcal{H}(t, x_t, p) = \essinf_{v\in U} \{  {_{\bV^*}\langle}Ax(t),p\rangle_{\bV} + {_{\bV^*}\langle}\beta(t, x_t, v),p\rangle_{\bV} + f(t, x_t, v)  \}, \text{ for } p\in \bV,
\end{equation}
where $\nabla u(t, x_t)$ denotes the vertical derivative of $u(t, x_t)$ at the path $x_t$ (see Definition \ref{def_deri}) and the unknown adapted random field $u$ is confined to the following form:
\begin{equation}\label{1.12}
u(t, x_t) = u(T, x_{t, T-t}) - \int_t^T \! \mathfrak{d}_s u(s, x_{t, s-t})ds -\int_t^T \! \mathfrak{d}_{\omega} u(s, x_{t, s-t})dW(s),
\end{equation}
where $x_{t, r-t}(s) = x_t(s)1_{[0, t)}(s) + x_t(t)1_{[t, r]}(s)$ for $0\le t\le r\le T$, $0\le s\le r$. The semimartingale decomposition theorem indicates the uniqueness so that the pair $(\mathfrak{d}_t, \mathfrak{d}_{\omega})$ is well-defined as two linear operators; in finite dimensional cases, the operators are consistent with those differential operators defined in \cite[Section 5.2]{cont2013functional} and \cite[Theorem 4.3]{leao2018weak} for instance. Comparing \eqref{SPHJ} - \eqref{1.12}, we can rewrite the SPHJ equation formally as a path-dependent backward stochastic partial differential equation (BSPDE):
\begin{equation}\label{BSPDE}
\left\{
\begin{split}
- du(t, x_t) &= \mathcal{H}(t,x _t, \nabla u(t, x_t))dt - \psi(t, x_t)dW(t), \text{ }\text{ } (t, x_t)\in [0, T)\times C([0, T]; \bH)
\\
u(T, x_T) &= G(x_T), \text{ }\text{ } x_T\in C([0, T]; \bH)
\end{split}
\right.
\end{equation}
 where the pair $(u, \psi) = (u, \mathfrak{d}_{\omega} u)$ is unknown.
 \\
\\
The notion of viscosity solutions for deterministic partial differential equations   can be traced back to early works (see \cite{crandall1984some,crandall1983viscosity,ishii1990viscosity,jensen1988uniqueness,lions1983optimal} for the finite dimensional cases, and \cite{crandall1985hamilton,lions1983optimal,lions1988viscosity} for the infinite dimensional cases, to name a few). The optimal control problem with deterministic path-dependent coefficients is first studied in \cite{lukoyanov2007viscosity} with a viscosity solution approach under the finite dimensional setup. The Hamiltonian is non-anticipatory and the existence and uniqueness theorems are proved. Other cases with similar setups can be found in \cite{ekren2016viscosity1,ekren2016pseudo,ren2017comparison}; for theories of general deterministic path-dependent PDEs, please refer to \cite{cont2013functional,cosso2018path,peng2016bsde,ren2017comparison} to mention but a few. When it comes to the extension of optimal control problems under an infinite dimensional framework, a wellposedness result is provided in \cite{mete1988hamilton} for the viscosity solution to the HJB equations in Banach spaces with state-dependent, deterministic coefficients and controls; another study with path-dependent setup is provided by \cite{bayraktar2018path} for a class of fully nonlinear path-dependent PDEs with nonlinear, monotone and coercive operators in Hilbert spaces. The work is focused on the wellposedness and stability of minimax solutions while some discussion of the viscosity solution approach is included as well. Under the finite dimensional framework, when the coefficients are state-dependent and possibly random (see \cite{bender2016first,qiu2018viscosity,qiu2019uniqueness}), the value function is proved to be the solution to a backward stochastic partial differential equation (BSPDE); for more research work in general BSPDEs, please see \cite{Bayraktar-Qiu_2017,cardaliaguet2019master,hu2002semi,peng1992stochastic} among many others. When the coefficients are both possibly random and path-dependent, a class of optimal control problems have been studied in \cite{qiu2022controlled} under the finite dimensional framework. 
Our work is a nontrivial extension of \cite{qiu2022controlled} in the infinite dimensional setup. Inspired by early works of viscosity solution in \cite{bayraktar2018path,fabbri2017stochastic,prevot2007concise}, we use a Gelfand triple $\bV\subset \bH \subset \bV^*$, where the continuous and compact embedding is set as usual and helps to deal with the lack of local compactness issue in the path space caused by both the path-dependence and the infinite dimensional spaces the state process takes values in. Meanwhile the dense and compact embedding argument, along with the bounded and coercive assumptions of operator $A$ helps us handle the possible unboundedness in $A:\bV\to\bV$ or $\bV\to\bH$ as well as the local compactness issue. It is worthwhile to point out that in \cite{fabbri2017stochastic}, the viscosity solution approach is discussed under one or more continuously and densely embedded Gelfand triples while both $\bV$ and $\bH$ are Hilbert spaces. In contrast to our setup, the state dynamics is deterministic and state-dependent, with an extra diffusion term. And the linear operator $A$ is possibly depending on both time and the admissible control. In general, our theory makes it possible to employ the dynamic programming and viscosity solution approach to study optimal control problems for a fairly general class of stochastic path-dependent Hamilton-Jacobi equations in infinite dimensional spaces. In this setting, the control problems \eqref{Control-probm}, \eqref{state-proces-contrl} and the associated path-dependent stochastic Hamilton-Jacobi equation \eqref{SPHJ}, to the best of our knowledge, have never been studied in the literature. 
 
The main obstacles in our paper are three-folded. First the operator $A:\bV\to \bV^*$ and the path-dependence make it hard to directly obtain an appropriate \emph{uniform-in-time}  stability estimation of the state processes in space $\bV$ or $\bH$, because for a.e. $t\in [0,T]$, $X(t)\in \bV$, $AX(t)$ takes value in $\bV^*$. It is thus difficult to achieve uniform convergence of its own finite dimensional projections. Although eventually we manage to resolve such issues in the uniqueness discussion of the viscosity solution, the unboundedness of $A: \bV\to\bV$ or $\bV\to\bH$, which leads to weaker estimations of path distance within the Gelfand triple as shown in Lemma \ref{lemma3.1}, further causes trouble in ensuring uniform convergence of the finite-dimensional approximations of other coefficients $f$, $\beta$, and $G$ (see \eqref{f_uniform}). For the same reason, we need to introduce further assumptions to ensure the existence of viscosity subsolution. This is different from the result under a finite dimensional framework. Second, the coefficients and thus the solution $u$ are random and path-dependent. The lack of local compactness in the path space, which is caused not just by the infinite dimensional setup for the state process but also by the path-dependence of the coefficients, drives us to define the random test functional spaces as sequences of compact subspaces using compact embedding and stopping time theories. A similar technique is adopted as in \cite{ekren2016viscosity1,ekren2016pseudo} to replace the pointwise extremality in the standard definition of viscosity solutions by the corresponding extremality in the context of optimal stopping problems. However, in contrast to the finite dimensional case \cite{qiu2022controlled}, not only the definition of viscosity solutions needs to be changed accordingly, but we can only obtain an even weaker version of comparison principle that is associated with infinite sequences of integers $\{k_n\}_{n\in\bN^+}$ and has an explicit initial state dependence. This is different from the deterministic nonlinear path-dependent cases (for instance \cite{ekren2016viscosity1,lukoyanov2007viscosity}) and the stochastic state-dependent cases (for instance \cite{qiu2018viscosity,qiu2019uniqueness}). 
Third, we did not define any topology in the measurable space $(\Omega,\sF)$. As a result, it is inappropriate to express our coefficients, test functionals as well as viscosity solutions pointwisely w.r.t. $\omega\in\Omega$, although they may be explicitly dependent on it. Instead, both the test functions and viscosity solutions are parametrized by $\Omega_{\tau}\in\sF_{\tau}$ for each point $(\tau,\xi)$, where $\tau$ may be a stopping time and $\xi$ valued in $ C([0,\tau];\bV)$. The combination of path-dependence and measurable randomness prompts us to take a different approach via dense and compact embedding arguments, as well as a weak version of comparison principle, other than the conventional variable-doubling techniques, in solving our stochastic path-dependent Hamilton-Jacobian equations.  

Our paper is organized as follows. In Section 2, we show some preliminary notations and main assumptions. Then we bring in certain regularity conditions to our test functional spaces, under which we are able to introduce the definition of the viscosity solutions. In Section 3, we first introduce the well-posedness results on the state process and some of its important estimates. Then we move on to the regularity analysis of the cost functional and the value function. Applying stopping time techniques, a generalized dynamic programming principle is proved,  and a generalized It\^o-Kunita formula is given under the path-dependent setup. In the end of this section, we prove that the value function is a viscosity solution to the associated SPHJ equation. In Section 4, a weak version of comparison principle is proved followed by some stronger assumptions and a finite dimensional approximation lemma. At last, we are able to prove the uniqueness of the viscosity solution to the SPHJ equation using finite dimensional approximations. In the appendix, one may find the proofs of Lemma 3.2 and Proposition 3.3.


\section{Preliminaries and definition of viscosity solution}
\subsection{Preliminaries}
Let $\bB$ be a Banach space equipped with norm $\|\cdot\|_{\bB}$. For each $r\in[0,T]$, let space $\Lambda_r^0(\bB):=C([0,r];\bB)$ be the set of all $\bB$-valued continuous functions on $[0,r]$ and $\Lambda_r(\bB):=D([0,r];\bB)$ the space of $\bB$-valued c\`adl\`ag (right continuous with left limits) functions on $[0,r]$. Define
\begin{align*}
\Lambda^0(\bB)=\cup_{r\in[0,T]}\Lambda_r^0(\bB), \Lambda(\bB)=\cup_{r\in[0,T]}\Lambda_r(\bB).
\end{align*}
Throughout this paper, for each path $X\in\Lambda_T(\bB)$ and $t\in[0,T]$, denote by $X_t=(X(s))\textsubscript{$0\leq s\leq t$}$ its restriction to time interval $[0,t]$, while using $X(t)$ to represent its value at time $t$. For each $(x_r,\overline{\rm x}_t)\in\Lambda_r(\bB)\times\Lambda_t(\bB)$ with $0\leq r\leq t\leq T$,
\begin{equation*}
\begin{split}
\|x_r\|_{0,\bB}=\sup_{s\in[0,r]}\|x_r(s)\|_{\mathbb{B}};\\
d_{0,\bB}(x_r,\overline{\rm x}_t)=\sqrt{|t-r|}+\sup_{s\in[0,t]}\{\|x_r(s)-\overline{\rm x}_t(s)\|_{\mathbb{B}}1_{[0,r)}(s)+\|x_r(r)-\overline{\rm x}_t(s)\|_{\mathbb{B}}1_{[r,t]}(s)\}.
\end{split}
\end{equation*}
Here for each $t\in[0,T]$, $(\Lambda_t^0(\bB),\|\cdot\|_{0,\mathbb{B}})$ and $(\Lambda_t(\bB),\|\cdot\|_{0,\mathbb{B}})$ are Banach spaces, while $(\Lambda_t^0(\bB),d_{0,\bB})$ and $(\Lambda_t(\bB),d_{0,\bB})$ are complete metric spaces. For each $x_t\in\Lambda_t(\bB)$ (respectively for $x_t\in\Lambda_t^0(\bB)$), we may define, correspondingly, $\overline{\rm x}\in\Lambda_T(\bB)$ (respectively for $\overline{\rm x}\in\Lambda_T^0(\bB)$) with $\overline{\rm x}(s)=x_t(t\land s)$ for $s\in[0,T]$. In addition, we shall use $\mathcal{B}(\Lambda^0(\bB))$, $\mathcal{B}(\Lambda(\bB))$, $\mathcal{B}(\Lambda_t^0(\bB))$, $\mathcal{B}(\Lambda_t(\bB))$ to denote the associated Borel $\sigma$-algebras.

For each $x_t\in\Lambda_t(\bV^*)$ and any $h\in \bV^*$, we denote its vertical perturbation $x_t^h\in\Lambda_t(\bV^*)$ with $x_t^h(s)=x_t(s)1_{[0,t)}(s)+(x_t(s)+h)1_{t}(s)$ for $s\in[0,t]$. Note that this vertical perturbation may not be time continuous at the end point.

\begin{defn}\label{def_deri}
Given a functional $\phi :\Lambda(\bV^*)\to\mathbb{R}$ and $x_t\in\Lambda_t(\bV^*)$, $\phi$ is said to be vertically differentiable at $x_t$ if the function
\begin{align*}
\phi(x_t^{\cdot}): \bV^*&\to\mathbb{R}
\\
h&\to\phi(x_t^h)
\end{align*}
is differentiable at 0 in a Gateaux derivative sense. The gradient $\nabla\phi(x_t)$ is $\bV$-valued and defined by
\begin{equation*}
{_{\bV}\langle}  \nabla\phi(x_t),h  \rangle_{\bV^*} = \lim_{\lambda\to 0}\frac{\phi(x_t^{\lambda h})-\phi(x_t)}{\lambda}
\end{equation*}
for all $h\in \bV^*$.
\end{defn}

Let $\bB'$ be another Banach space equipped with norm $\|\cdot\|_{\bB'}$. The continuity of functionals on metric spaces $\Lambda^0(\bB)$ and $\Lambda(\bB)$ can be defined in a standard way. Given $x_t\in\Lambda(\bB)$, we say a mapping $\phi: \Lambda(\bB)\to\mathbb{B}'$ is continuous at $x_t$ if for any $\epsilon > 0$ there exists some $\delta >0$ such that for any $x_r\in\Lambda(\bB)$ satisfying $d_{0,\bB}(x_r, x_t)<\delta$, it holds that $\|\phi(x_r)-\phi(x_t)\|_{\mathbb{B}'}<\epsilon$. If the $\mathbb{B}'$-valued functional $\phi$ is continuous and bounded at all $x_t\in\Lambda(\bB)$, $\phi$ is said to be continuous on $\Lambda(\bB)$ and denoted by $\phi\in C(\Lambda(\bB) ; \mathbb{B}')$. Similarly we define $C(\Lambda^0(\bB) ;\mathbb{B}')$, $C([0,T]\times\Lambda(\bB) ; \mathbb{B}')$, and $C([0,T]\times\Lambda^0(\bB) ; \mathbb{B}')$.

Throughout this paper, as usual the measurability of non-separable space-valued random functions is in a strong sense, i.e., such measurable functions may be approximated point-wisely (a.e. if a measure is given) by simple functions.  For each $t\in [0,T]$, denote by $L^0(\Omega\times\Lambda_t(\bB), \sF_t\otimes\mathcal{B}(\Lambda_t(\bB)); \mathbb{B}')$,  the space of $\mathbb{B}'$-valued  $\sF_t\otimes\mathcal{B}(\Lambda_t(\bB))$-measurable random variables. For each
measurable function
\begin{equation*}
u: (\Omega\times[0,T]\times\Lambda(\bB), \sF\otimes\mathcal{B}([0,T])\otimes\mathcal{B}(\Lambda(\bB)))\to (\mathbb{B}',\mathcal{B}(\mathbb{B}')),
\end{equation*}
we say $u$ is $adapted$ if for any time $t\in [0,T]$, $u$ is $\sF_t\otimes\mathcal{B}(\Lambda_t(\bB))$-measurable. For $p\in [1,\infty]$, denote by $\mathcal{S}^p(\Lambda(\bB); \mathbb{B}')$ the set of the adapted functions $u: \Omega\times [0,T]\times\Lambda(\bB)\to \mathbb{B}'$ such that for almost all $\omega\in\Omega$, $u$ is valued in $C([0,T]\times\Lambda(\bB); \mathbb{B}')$ and
\begin{equation*}
\| u \|_{\mathcal{S}^p(\Lambda(\bB); \mathbb{B}')}:=\left\| \sup_{(t,x_t)\in [0,T]\times\Lambda_t(\bB)} \| u(t,x_t) \|_{\mathbb{B}'} \right\|_{L^p(\Omega, \sF, \mathbb{P})}<\infty.
\end{equation*}
For $p\in [1,\infty)$, denote by $\mathcal{L}^p(\Lambda(\bB); \mathbb{B}')$ the totality of all the adapted functions $\mathcal{X}: \Omega\times [0,T]\times\Lambda(\bB)\to\mathbb{B}'$ such that for almost all $(\omega,t)\in\Omega\times [0,T]$, $\mathcal{X}$ is valued in $C(\Lambda(\bB); \mathbb{B}')$, and
\begin{equation*}
\|\mathcal{X}\|_{\mathcal{L}^p(\Lambda(\bB); \mathbb{B}')}:=\left\| \left(\int_0^T \! \sup_{x_t\in\Lambda_t(\bB)} \| \mathcal{X}(t,x_t) \|^p_{\mathbb{B}'}dt\right)^{1/p} \right\|_{L^p(\Omega, \sF, \mathbb{P})}<\infty.
\end{equation*}
Obviously $(\mathcal{S}^p(\Lambda(\bB); \mathbb{B}'), \| \cdot \|_{\mathcal{S}^p(\Lambda(\bB); \mathbb{B}')})$ and $(\mathcal{L}^p(\Lambda(\bB); \mathbb{B}'), \| \cdot \|_{\mathcal{L}^p(\Lambda(\bB); \mathbb{B}')})$ are Banach spaces. In a similar way, we define spaces $L^0(\Omega\times\Lambda_t^0(\bB), \sF_t\otimes\mathcal{B}(\Lambda_t^0(\bB)); \mathbb{B}')$, $(\mathcal{S}^p(\Lambda^0(\bB); \mathbb{B}'), \| \cdot \|_{\mathcal{S}^p(\Lambda^0(\bB); \mathbb{B}')})$, and $(\mathcal{L}^p(\Lambda^0(\bB); \mathbb{B}'), \| \cdot \|_{\mathcal{L}^p(\Lambda^0(\bB); \mathbb{B}')})$.

For each $\delta >0$, $0\le\tau\le t\le T$, Banach space $\bB\subset\bB'$, and $\xi\in\Lambda_{\tau}^0(\bB)$, define the neighbourhood of $\xi$ in the space $\Lambda_t^0(\bB')$ as $B^{\bB'}_{\delta}(\xi)$ being the family of $x\in \Lambda^0_{t}(\bB')$ satisfying
\begin{equation*}
\sup_{r\in[0,t]}\|x(r)-\xi(r\land \tau)\|_{\bB'} < \delta.
\end{equation*}

Within the Gelfand triple, there exists $c>0$ such that
\begin{equation*}
\|\cdot\|_{\bV^*} \le c\|\cdot\|_{\bH} \le c^2\|\cdot\|_{\bV},
\end{equation*}
i.e., for all $h\in\bV$,
\begin{equation*}
\|h\|_{\bV^*} \le c\|h\|_{\bH} \le c^2\|h\|_{\bV}.
\end{equation*}
W.l.o.g, we assume $c=1$. Following is the assumption we use throughout this paper.

($\mathcal{A}1$) $G\in L^{\infty}(\Omega; \sF_T; C(\Lambda_T(\bV^*); \mathbb{R}))$. 
\begin{enumerate}[label=(\roman*)]
\item for each $v\in \cU$, $f(\cdot, \cdot, v)$, $\beta(\cdot, \cdot, v)$ is adapted;
\item for almost all $(\omega, t)\in\Omega\times [0,T]$, $f(t, \cdot, \cdot)$, $\beta(t, \cdot, \cdot)$ is continuous on $\Lambda_t(\bV^*)\times U$;
\item there exists $L>0$ such that for all $x_T,\overline{x}_T\in \Lambda_T(\bH)$, and $t\in [0,T]$, there hold
\begin{equation*}
\esssup_{\omega\in\Omega} | G(x_T) |+\esssup_{\omega\in\Omega} \sup_{v\in U} | f(t, x_t, v) |+\esssup_{\omega\in\Omega} \sup_{v\in U} \| \beta(t, x_t, v)  \|_{\bH} \le L,
\end{equation*}
\begin{align*}
&\esssup_{\omega\in\Omega} | G(x_T)-G(\overline{x}_T) | + \esssup_{\omega\in\Omega} \sup_{v\in U} | f(t, x_t, v)-f(t, \overline{x}_t, v) | 
\\
&+\esssup_{\omega\in\Omega} \sup_{v\in U} \| \beta(t, x_t, v)-\beta(t, \overline{x}_t, v) \|_{\bV^*} \le L(\| x_T-\overline{x}_T \|_{0,\bH} + \|x_t - \overline{x}_t\|_{0,\bH}).
\end{align*}
\end{enumerate}



\subsection{Definition of viscosity solutions}
For $\delta\ge0$, $x_t\in\Lambda_t(\bV^*)$, we define the horizontal extension $x_{t,\delta}\in\Lambda_{t+\delta}(\bV^*)$ by setting $x_{t,\delta}(s)=x_t(s\land t)$ for all $s\in [0,t+\delta]$.

\begin{defn}\label{cf1}
For $u\in\mathcal{S}^2(\Lambda(\bV^*); \mathbb{R})$ with $\nabla u\in\mathcal{L}^2(\Lambda(\bV^*); \bV)$, we say $u\in\mathcal{C}^1_{\sF}$ if
\begin{enumerate}[label=(\roman*)]
\item there exists $(\mathfrak{d}_t u, \mathfrak{d}_{\omega} u)\in\mathcal{L}^2(\Lambda(\bV^*); \mathbb{R})\times\mathcal{L}^2(\Lambda(\bV^*); \mathbb{R}^m)$ such that for all $0\le r\le \tau\le T$, $x_r\in\Lambda_r(\bV^*)$
\begin{equation*}
u(\tau,x_{r,\tau -r})=u(r,x_r)+\int_r^{\tau} \! \mathfrak{d}_s u(s, x_{r,s-r})ds +\int_r^{\tau} \! \mathfrak{d}_{\omega} u(s, x_{r,s-r})dW(s), a.s.;
\end{equation*}
\item there exists a constant $\rho\in (0,\infty)$ such that for almost all $(\omega,t)\in\Omega\times [0,T]$ and all $x_t\in\Lambda_t^0(\bV^*)$, there holds $\| \nabla u(t,x_t) \|_{\bV} \le\rho$;
\item there exists a constant $\alpha\in (0,1)$ and a finite partition $0=\underline{t}_0<\underline{t}_1<...<\underline{t}_n=T$, for integer $n\ge 1$, such that $\nabla u$ is a.s. valued in $C((\underline{t}_j, \underline{t}_{j+1})\times\Lambda(\bV^*); \bV)$ for $j=0, ..., n-1$, and for any $0<\delta<\min_{0\le j\le {n-1}} | \underline{t}_{j+1}-\underline{t}_j |$, there exists $L_{\alpha}^{\delta}\in (0,\infty)$ satisfying  a.s for almost all $t\in\cup_{0\le j\le n-1} (\underline{t}_j, \underline{t}_{j+1}-\delta]$ and all $x_t, y_t\in\Lambda_t^0(\bV^*)$,
\begin{align*}
|  u(t, x_t) - u(t, y_t) | + \| \nabla u(t, x_t)-\nabla u(t, y_t) \|_{\bV}&\le L_{\alpha}^{\delta}\| x_t-y_t \|_{0,\bV^*}^{\alpha},
\\
| \mathfrak{d}_t u(t, x_t)-\mathfrak{d}_t u(t, y_t) | + \| \mathfrak{d}_{\omega} u(t, x_t)-\mathfrak{d}_{\omega} u(t, y_t) \|_{\bR^m}  &\le L_{\alpha}^{\delta}\| x_t-y_t \|_{0,\bV^*}^{\alpha}.
\end{align*}
\end{enumerate}
We call the constant $\alpha$ is the exponent associated to $u\in\mathcal{C}_{\sF}^1$ and $0=\underline{t}_0<\underline{t}_1<...<\underline{t}_n=T$ the associated partition.
\end{defn}

Each $u\in\mathcal{C}_{\sF}^1$ can be thought of as an It\^{o} process and thus a semi-martingale parameterized by $x\in\Lambda(\bV^*)$. Doob-Meyer  decomposition theorem ensures the uniqueness of the integrable pair $(\mathfrak{d}_t u, \mathfrak{d}_{\omega} u)$ at points $(\omega, t, x_{s,t-s})$ for $0\le s<t\le T$, and a standard denseness argument may yield the uniqueness of the pair $(\mathfrak{d}_t u, \mathfrak{d}_{\omega} u)$ in $\mathcal{L}^2(\Lambda(\bV^*); \mathbb{R})\times\mathcal{L}^2(\Lambda(\bV^*); \mathbb{R}^m)$. In finite dimensional cases, this makes sense of the two linear operators $\mathfrak{d}_t$ and $\mathfrak{d}_{\omega}$ which are consistent with the differential operators in \cite[Section 5.2]{cont2013functional} and \cite[Theorem 4.3]{leao2018weak}. Particularly, if $u(t,x)$ is a deterministic function on the time-state space $[0,T]\times \bV^*$, we may have $\mathfrak{d}_{\omega} u \equiv 0$ and $\mathfrak{d}_t u$ coincides with the classical partial derivative in time; if the random function $u$ on $\Omega\times [0,T]\times \bV^*$ is regular enough (w.r.t. $\omega$) in the sense of Malliavin's calculus, the term $\mathfrak{d}_{\omega} u$ is just the Malliavin's derivative. In addition, the operators $\mathfrak{d}_t$ and $\mathfrak{d}_{\omega}$ are different from the path derivatives $(\partial_t, \partial_{\omega})$ via the functional It\^{o} formulas (see \cite{buckdahn2015pathwise} and \cite[Section 2.3]{ekren2016viscosity}); if $u(\omega, t, x)$ is smooth enough w.r.t. $(\omega, t)$ in the path space, for each $x$, we have the relation
\begin{equation*}
\mathfrak{d}_t u(\omega, t, x)=(\partial_t+\frac{1}{2}\partial_{\omega\omega}^2)u(\omega, t, x),  \text{ } \mathfrak{d}_{\omega} u(\omega, t, x)=\partial_{\omega} u(\omega, t, x),
\end{equation*}
which may be seen from \cite[Section 6]{ekren2016viscosity} and \cite{buckdahn2015pathwise}.

Let $\mathcal{T}^t$ be the set of stopping times $\tau$ valued in $[t, T]$ and $\mathcal{T}^t_{+}$ the subset of $\mathcal{T}^t$ such that for each $\tau\in\mathcal{T}_{+}^t$, $\tau>t$. Then for each $\tau\in\mathcal{T}^0$ and $\Omega_{\tau}\in\sF_{\tau}$, we denote by $L^0(\Omega_{\tau}, \sF_{\tau}; \Lambda_{\tau}^0(\bB))$\footnote{Here we actually mean $L^0(\Omega_{\tau}, \sF_{\tau}\cap\Omega_{\tau}; \Lambda_{\tau}^0(\bB))$, where $\sF_{\tau}\cap\Omega_{\tau}:=\{ B\cap\Omega_{\tau}: B\in\sF_{\tau} \}$. For simplicity, we just denote $L^0(\Omega_{\tau}, \sF_{\tau}; \Lambda_{\tau}^0(\bB)) = L^0(\Omega_{\tau}, \sF_{\tau}\cap\Omega_{\tau}; \Lambda_{\tau}^0(\bB))$.} the set of $\Lambda_{\tau}^0(\bB)$-valued $\sF_{\tau}$-measurable functions defined on $\Omega_{\tau}$. 

Now, for each $k\in\mathbb{N}^{+}$, $0\le t\le s\le T$, $\xi\in\Lambda_t^0(\bV)$, we define
\begin{align*}
\Lambda_{t, s}^{0,k;\xi}(\bV) = \Bigg\{ &x\in\Lambda^0_s(\bH)\cap L^2(0,s;\bV) : x(\tau) = \xi(t\land \tau)+\int_{t\land\tau}^{\tau} \! Ax(r) + g(r)dr, \tau\in [0,s], \text{ for some }
\\
&g\in L^{\infty}(0,T; \bH)\text{ with } \| g \|_{L^{\infty}(0,T; \bH)}\le k \Bigg\},
\end{align*}
and in particular , we set $\Lambda_{0,t}^{0,k}(\bV) = \cup_{\xi\in \bV} \Lambda_{0,t}^{0,k;\xi}(\bV)$ for each $t\in [0,T]$. Then obviously $\tilde{\Lambda}^0_{0,t}(\bV) := \cup_{k\in\mathbb{N}^+} \cup_{\xi\in \bV} \Lambda_{0,t}^{0,k;\xi}(\bV)$ is dense in $\Lambda_t^0(\bH)$. We also note that by \cite[Remark 2.3]{bayraktar2018path}, the path space $\Lambda_{0, t}^{0,k;\xi}(\bV)$ is compactly embedded into $\Lambda_t^0(\bH)$. 



We now introduce the notion of viscosity solutions. For each $(u, \tau)\in\mathcal{S}^2(\Lambda(\bV^*); \mathbb{R})\times\mathcal{T}^0$, $\Omega_{\tau}\in\sF_{\tau}$ with $\mathbb{P}(\Omega_{\tau})>0$ and $\xi\in L^0(\Omega_{\tau}, \sF_{\tau}; \Lambda_{\tau}^0(\bV))$, we define for each $k\in\mathbb{N}^{+}$,
\begin{align*}
\underline{\mathcal{G}}u(\tau, \xi; \Omega_{\tau}, k):=&\Bigg\{ \phi\in\mathcal{C}_{\sF}^1: \text{ there exists } \hat{\tau}_k\in \mathcal{T}^{\tau}_{+} \text{ such that } 
\\
&(\phi-u)(\tau, \xi)1_{\Omega_{\tau}}=0=\essinf_{\overline{\tau}\in \mathcal{T}^{\tau}} E_{\sF_{\tau}} \left[ \inf_{y\in\Lambda_{\tau,\overline{\tau}\land \hat{\tau}_k}^{0,k;\xi}(\bV)}(\phi-u)(\overline{\tau}\land \hat{\tau}_k,y) \right]1_{\Omega_{\tau}} \text{ }a.s. \Bigg\},
\end{align*}

\begin{align*}
\overline{\mathcal{G}}u(\tau, \xi; \Omega_{\tau}, k):=&\Bigg\{ \phi\in\mathcal{C}_{\sF}^1: \text{ there exists } \hat{\tau}_k\in \mathcal{T}^{\tau}_{+} \text{ such that } 
\\
&(\phi-u)(\tau, \xi)1_{\Omega_{\tau}}=0=\esssup_{\overline{\tau}\in \mathcal{T}^{\tau}} E_{\sF_{\tau}} \left[ \sup_{y\in\Lambda_{\tau,\overline{\tau}\land \hat{\tau}_k}^{0,k;\xi}(\bV)}(\phi-u)(\overline{\tau}\land \hat{\tau}_k,y) \right]1_{\Omega_{\tau}} \text{ }a.s. \Bigg\},
\end{align*}

Throughout this work, by saying $(s, x)\to (t^{+},\xi)$ for some $(t,\xi)\in [0,T)\times\Lambda_t^0(\bV)$, we mean that there exists $k\in \bN^+$  such that $(s,x)\to(t^{+},\xi)$ with $s\in (t, T]$, $x\in \Lambda_{t,s}^{0,k;\xi}(\bV)$, and $\sup_{r\in[t,s]}\|x(r)-\xi(t)\|_{\bV^*}\to 0$. The definition of viscosity solutions then comes as follows.
\begin{defn}\label{def_vis_sol}
We say $u\in\mathcal{S}^2(\Lambda^0(\bH); \mathbb{R})$ is a viscosity subsolution (resp. supersolution) of SPHJ equation \eqref{SPHJ}, if $u(T, y)\le (\text{resp.} \ge)G(y)$ for all $y\in\Lambda_T^0(\bH)$ a.s., and for any $K_0\in\mathbb{N}^{+}$, there exists $k\ge K_0$ such that for any $\tau\in\mathcal{T}^0$, $\Omega_{\tau}\in\sF_{\tau}$ with $\mathbb{P}(\Omega_{\tau})>0$ and $\xi\in L^0(\Omega_{\tau}, \sF_{\tau}; \Lambda_{\tau}^0(\bV))$ and any $\phi\in\underline{\mathcal{G}}u(\tau, \xi; \Omega_{\tau}, k)$(resp. $\phi\in\overline{\mathcal{G}}u(\tau, \xi; \Omega_{\tau}, k)$), there holds,
\begin{equation}\label{subsol}
\mbox{ess}	\liminf_{(s,x)\to({\tau}^{+}, \xi)}   \left\{ -\mathfrak{d}_s \phi(s,x)-\mathcal{H}(s, x, \nabla \phi(s,x)) \right\}\le 0, \text{ for } \omega\in\Omega_{\tau} \text{ a.s. }
\end{equation}

\begin{equation}\label{supsol}
(\text{ resp. } \mbox{ess}	\limsup_{(s,x)\to({\tau}^{+}, \xi)}   \left\{ -\mathfrak{d}_s \phi(s,x)-\mathcal{H}(s, x, \nabla \phi(s,x)) \right\}\ge 0, \text{ for } \omega\in\Omega_{\tau} \text{ a.s. )}
\end{equation}
The function $u$ is a viscosity solution of SPHJ equation \eqref{SPHJ} if it is both a viscosity subsolution and a viscosity supersolution of \eqref{SPHJ}.
\end{defn}


  The test function space $ \mathscr C_{\sF}^1$ is expected to include the $L^2$-solutions of \textit{ordinary} backward stochastic differential equations which are space-invariant and have $\mathfrak{d}_{t}u$ allow for time-discontinuity and just measurability in time $t$.   On the other hand, as $(s,x)\to({\tau}^{+}, \xi)$, we have $x\in\Lambda_{\tau, s}^{0,k;\xi}(\bV)$ for some $k\in\bN^+$ and thus  $x\in L^2(0,s;\bV)$ which is well defined a.e. but not pointwisely on $[0,s]$ as $\bV$-valued functions; consequently, $Ax(r)$ and $\mathcal{H}(r, x, \nabla \phi(r,x))$ for $\tau \leq r \leq s$ are not pointwisely but a.e. well defined. Also, the involved functions and terms are just measurable w.r.t. $\omega\in\Omega$ and all such measurability features  motivate us to use essential limits in \eqref{subsol} and \eqref{supsol}.

\begin{rmk}\label{r2.1}

In the above definition, we can see that each viscosity subsolution (resp. supersolution) of SPHJ equation \eqref{SPHJ} is associated to an infinite sequence of integers $1\le \underline{k}_1 \le \cdots \le \underline{k}_n \le \cdots$ (resp. $1\le \overline{k}_1 \le \cdots \le \overline{k}_n \le \cdots$) such that the required properties in Definition \ref{def_vis_sol} are holding for all the test functions in $\underline{\mathcal{G}}u(\tau, \xi; \Omega_{\tau}, \underline{k}_i)$ (resp. $\overline{\mathcal{G}}u(\tau, \xi; \Omega_{\tau}, \overline{k}_i)$) for $\forall \text{ }i\in\mathbb{N}^+$.

Throughout this paper we define for $\forall \phi\in \mathcal{C}_{\sF}^1$, $v\in U$, $t\in[0, T]$, and $x_t \in \tilde{\Lambda}^0_{0,t}(\bV)$,
\begin{equation*}
\mathcal{L}^v \phi(t, x_t) = \mathfrak{d}_t \phi(t, x_t) + {_{\bV^*}\langle} Ax(t),\nabla \phi(t, x_t)\rangle_{\bV} + {_{\bV^*}\langle}\beta(t, x_t, v),\nabla \phi(t, x_t)\rangle_{\bV}.
\end{equation*}

\end{rmk}

\begin{rmk}\label{r2.2}
In view of $(\mathcal{A}1)$, for each $\phi\in\mathcal{C}_{\sF}^1$, there exists an $(\sF_t)_{t\ge 0}$-adapted process $\zeta^{\phi}\in \cL^2(\Lambda(\bV^*);\bR)$ such that for a.e. $(\omega, t)\in\Omega\times[0, T]$, and all $x_t \in \tilde{\Lambda}^0_{0,t}(\bV)$, we have
\begin{equation*}
|  -\mathfrak{d}_t \phi(t, x_t) - \mathcal{H}(t, x_t, \nabla\phi(t, x_t))  | \le \sup_{  v\in U  } |  \mathcal{L}^v \phi(t, x_t) + f(t, x_t, v)  | \le \zeta_t^{\phi} + c_3\rho \|x(t)\|_{\bV}.
\end{equation*}
So the essential limits in the above definition is well defined. Meanwhile, there exists a finite partition $0 = \underline{t}_0 <  \underline{t}_1 < \cdots < \underline{t}_n = T$, such that for any $\min_{0\le j\le n-1} |  \underline{t}_{j+1} - \underline{t}_{j}  | > \delta > 0$, and for almost all $t\in \cup_{0\le j\le n-1} (  \underline{t}_j, \underline{t}_{j+1} - \delta  ]$, $k\in\bN^+$, $\xi\in \bV$ and all $x_t, \overline{x}_t\in\Lambda_{0,t}^{0,k;\xi}(\bV)$, it holds that
\begin{align*}
&\text{ }\text{ }\text{ }\text{ } \left|  \{  -\mathfrak{d}_t \phi(t, x_t) - \mathcal{H}(t, x_t, \nabla\phi(t, x_t))  \} - \{  -\mathfrak{d}_t \phi(t, \overline{x}_t) - \mathcal{H}(t, \overline{x}_t, \nabla \phi(t, \overline{x}_t))  \}  \right|
\\
&\le \sup_{v\in U} \left|  (  \mathcal{L}^v \phi(t, x_t) + f(t, x_t, v)  ) - (  \mathcal{L}^v \phi(t, \overline{x}_t) + f(t, \overline{x}_t, v)  )  \right|
\\
&\le (L^{\delta}_{\alpha} + L) \left[  (L+1)\|  x_t - \overline{x}_t  \|_{0,\bV^*}^{\alpha} +  (1 + \rho)\|  x_t - \overline{x}_t  \|_{0,\bH}  \right] 
\\
&\quad
	+ \sup_{v\in U}\left| {_{\bV^*}\langle}Ax(t), \nabla \phi(t, x_t)\rangle_{\bV} - {_{\bV^*}\langle} A\overline{x}(t),\nabla \phi(t, \overline{x}_t)\rangle_{\bV} \right|
\end{align*}
where $\alpha$ is the exponent associated to $\phi\in\mathcal{C}_{\sF}^1$. 

Notice that here on contrary to the finite dimensional case, we have extra $A$ operator terms in the functions $\mathcal{L}^v \phi(t, x_t)$ and $\mathcal{L}^v \phi(t, \overline{x}_t)$. They may be unbounded  according to our set up, thus we need to consider them separately. Applying triangular inequality and the coercivity of $A$ operator, we have the following result,
\begin{align*}
&\text{ }\text{ }\text{ }\text{ }\sup_{v\in U}\left| {_{\bV^*}\langle}Ax(t),\nabla \phi(t, x_t)\rangle_{\bV} - {_{\bV^*}\langle}A\overline{x}(t), \nabla \phi(t, \overline{x}_t)\rangle_{\bV} \right|
\\
& = \sup_{v\in U}\left | {_{\bV^*}\langle}A(x(t) - \overline{x}(t)), \nabla \phi(t, x_t)\rangle_{\bV} + {_{\bV^*}\langle }A\overline{x}(t), \nabla\phi(t,x_t) - \nabla\phi(t,\overline{x}_t)\rangle_{\bV} \right |
\\
& \le  \left\| A(x(t) - \overline{x}(t))  \right\|_{\bV^*} \left\| \nabla \phi(t, x_t) \right\|_{\bV} +  \left\| A\overline{x}(t) \right\|_{\bV^*}  \left\| \nabla\phi(t,x_t) - \nabla\phi(t,\overline{x}_t) \right\|_{\bV}
\\
&\le  c_3 \rho \left\| x(t) - \overline{x}(t) \right\|_{\bV} +  c_3 L_{\alpha}^{\delta} \left\| \overline{x}(t) \right\|_{\bV} \cdot \left\| x_t - \overline{x}_t \right\|_{0,\bV^*}^{\alpha}.
\end{align*}




\end{rmk}


\section{Existence of the viscosity solution}
\subsection{Some auxiliary results}
For any $T>0$, denote by  $\overline{\cS}^2([0,T];\bH)$ the space of $(\sF_t)_{t\ge 0}$-adapted $\bH$-valued time continuous stochastic processes $u: \Omega\times [0,T]\to \bH$, such that for any $u\in\overline{\cS}^2([0,T];\bH)$, we have
\begin{equation*}
\|u\|_{\overline{\cS}^2([0,T];\bH)} := \left(E \left[ \max_{t\in [0,T]} \left\| u(t) \right\|_{\bH}^2 \right]\right)^{\frac{1}{2}} < \infty.
\end{equation*}
Similarly, denote by $\overline{L}^2(0,T;\bV)$ the space of $(\sF_t)_{t\ge 0}$-adapted $\bV$-valued stochastic processes $v: \Omega\times [0,T]\to \bV$, such that for any $v\in\overline{L}^2(0,T;\bV)$, we have
\begin{equation*}
\|v\|_{\overline{L}^2(0,T;\bV)} := \left(E \int_0^T \! \left\| v(t) \right\|_{\bV}^2 dt \right)^{\frac{1}{2}} < \infty.
\end{equation*}

\begin{defn}\label{def_state_sol}
An $(\sF_t)_{t\ge 0}$-adapted process $X\in \overline{\cS}^2([0,T];\bH) \cap \overline{L}^2(0,T;\bV)$ is called a solution of (\ref{state-proces-contrl}) if we have $\bP$-a.s.
\begin{equation}
X(t) = X_0 + \int_0^t \! AX(s) + \beta(s,X_s,\theta(s)) ds, \text{ }\text{ }\forall t\in [0,T],
\end{equation}
where both sides are thought of as $\bV^*$-valued $(\sF_t)_{t\ge 0}$-adapted processes.

\end{defn}

\begin{thm}\label{state_sol_wellposedness}
Given $X_0\in \bH$, $\theta\in\cU$, under Assumption $(\cA 1)$, the stochastic differential equation \eqref{state-proces-contrl} admits a unique solution $X$ in the sense of Definition \ref{def_state_sol}.
\end{thm}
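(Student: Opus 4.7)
The plan is to build a solution by Picard iteration in $\overline{\cS}^2([0,T];\bH)\cap\overline{L}^2(0,T;\bV)$, exploiting the variational Gelfand-triple structure $\bV\hookrightarrow\bH\hookrightarrow\bV^*$. The key observation is that the state equation contains no stochastic integral, so for each $\omega$ it is a deterministic non-autonomous linear equation with an $\bH$-valued inhomogeneous forcing $\beta(\cdot,X_{\cdot},\theta)$; measurability and $(\sF_t)$-adaptedness are then transported through the iteration automatically.

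First I would record the well-posedness of the linear sub-problem: for any $(\sF_t)$-adapted $g \in L^{\infty}(\Omega\times[0,T];\bH)$, the equation $Y(t) = x_0 + \int_0^t (AY(s) + g(s))\,ds$ has a unique $(\sF_t)$-adapted solution $Y\in\overline{\cS}^2([0,T];\bH)\cap\overline{L}^2(0,T;\bV)$, a classical variational result under the coercivity and boundedness of $A$ (as in \cite{prevot2007concise}), applied pathwise. I would then set $X^0(t)\equiv x_0$ and define $X^{n+1}$ as the solution of the linear sub-problem with forcing $g^n(t) := \beta(t,X^n_t,\theta(t))$; parts (i)--(iii) of $(\cA 1)$ ensure $g^n$ is adapted and satisfies $\|g^n(t)\|_{\bH}\le L$, so each $X^{n+1}$ is well defined in the correct space.

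For the contraction step, I would apply the variational chain rule to $\|X^{n+1}(t)-X^n(t)\|_{\bH}^2$, producing two integrands: a duality pairing $2\,{_{\bV^*}\langle} A(X^{n+1}-X^n), X^{n+1}-X^n\rangle_{\bV}$ bounded by $c_1\|X^{n+1}-X^n\|_{\bH}^2 - c_2\|X^{n+1}-X^n\|_{\bV}^2$ via coercivity, and a pairing with $\beta(s,X^n_s,\theta)-\beta(s,X^{n-1}_s,\theta)$. The $\bV^*$-Lipschitz bound in $(\cA 1)$(iii), combined with Young's inequality $2ab \le (4/c_2)a^2 + (c_2/4)b^2$, controls the latter by $(4L^2/c_2)\|X^n_s - X^{n-1}_s\|_{0,\bH}^2 + (c_2/4)\|X^{n+1}-X^n\|_{\bV}^2$; the surviving $\tfrac{3c_2}{4}\|X^{n+1}-X^n\|_{\bV}^2$ term gives simultaneous control of the $\bV$-norm. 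Taking $\sup_{s\le t}$, expectation, and Gronwall, then iterating, yields $E\sup_{s\le T}\|X^{n+1}(s)-X^n(s)\|_{\bH}^2 \le (CT)^n/n!\cdot E\sup\|X^1-X^0\|_{\bH}^2$, so $\{X^n\}$ is Cauchy in $\overline{\cS}^2([0,T];\bH)\cap\overline{L}^2(0,T;\bV)$. The limit $X$ solves the equation after passing to the limit in the variational formulation using $(\cA 1)$(ii) and the boundedness of $A$; uniqueness follows from the same energy estimate applied to the difference of two solutions.

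The main obstacle is the mismatch between the $\bV^*$-Lipschitz regularity of $\beta$ and the $\bH$-norm in which the path-dependence is measured: without the $-c_2\|\cdot\|_{\bV}^2$ contribution coming from the coercivity of $A$, the cross-term produced by Young's inequality cannot be absorbed, and the Picard iteration fails to close. This is exactly why the Gelfand-triple variational framework is essential here, and why a direct $\bH$-norm contraction scheme (which would suffice were $\beta$ Lipschitz in $\bH$) does not work.
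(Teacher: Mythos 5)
Your proposal is correct and follows essentially the same route as the paper: both solve the frozen-coefficient linear equation via the variational theory of \cite{prevot2007concise}, then close a fixed-point argument with the same energy estimate in which the coercivity of $A$ supplies the $-c_2\|\cdot\|_{\bV}^2$ term needed to absorb, via Young's inequality, the cross term coming from the $\bV^*$-Lipschitz bound on $\beta$. The only (cosmetic) difference is that you obtain the fixed point by Picard iteration with $(CT)^n/n!$ decay on all of $[0,T]$, whereas the paper makes the solution map a strict contraction on a short interval $[0,T_0]$ and then patches finitely many intervals together.
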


\begin{proof}
For each $X_0\in \bH$, $\theta\in\cU$, $T_0\in [0,T]$, and any given $\tilde{X} \in \overline{\cS}^2([0,T_0];\bH) \cap \overline{L}^2(0,T_0;\bV)$, the following stochastic differential equation
\begin{equation}\label{SDE}
\left\{
\begin{split}
\frac{dX(t)}{dt} &= AX(t) + \beta(t, \tilde{X}_t,\theta(t)),
\\
X(0) &= X_0,
\end{split}
\right.
\end{equation}
admits a unique solution (\cite[Theorem 4.2.4]{prevot2007concise})
\begin{equation*}
X\in \overline{L}^2(0,T_0;\bV)\cap\overline{\cS}^2([0,T_0];\bH).
\end{equation*}
Therefore, we may define the solution map
\begin{align*}
M: \overline{L}^2(0,T_0;\bV)\cap\overline{\cS}^2([0,T_0];\bH) &\to \overline{L}^2(0,T_0;\bV)\cap\overline{\cS}^2([0,T_0];\bH)
\\
\tilde{X} &\to M_{\tilde{X}},
\end{align*}
where $X$ is the unique solution of (\ref{SDE}) associated to $\tilde{X}$.

For each $\tilde{X},\tilde{Y}\in \overline{L}^2(0,T_0;\bV)\cap\overline{\cS}^2([0,T_0];\bH)$, we set
\begin{equation*}
X = M_{\tilde{X}}, Y = M_{\tilde{Y}}.
\end{equation*}

Applying the It\^o's formula introduced by \cite[Theorem 4.2.5]{prevot2007concise}, we have
\begin{align*}
&\left\|Y(t) - X(t)\right\|_{\bH}^2
\\
= &\left\| M_{\tilde{Y}}(t) - M_{\tilde{X}}(t) \right\|_{\bH}^2
\\
= &\int_0^t \! 2 \langle A(Y(s) - X(s)), Y(s) - X(s) \rangle + 2 \langle \beta(s,\tilde{Y}_s,\theta(s)) - \beta(s,\tilde{X}_s,\theta(s)), Y(s) - X(s) \rangle ds
\\
\le &\int_0^t \! c_1 \left\| Y(s) - X(s) \right\|_{\bH}^2 - c_2 \left\| Y(s) - X(s) \right\|_{\bV}^2 ds
\\
&+2\int_0^t \! \left\| \beta(s,\tilde{Y}_s,\theta(s)) - \beta(s,\tilde{X}_s,\theta(s)) \right\|_{\bV^*} \left\| Y(s) - X(s) \right\|_{\bV} ds
\\
\le &\int_0^t \! c_1 \left\| Y(s) - X(s) \right\|_{\bH}^2 - c_2 \left\| Y(s) - X(s) \right\|_{\bV}^2 + 2L\left\| \tilde{Y}_s - \tilde{X}_s \right\|_{0,\bH}  \left\| Y(s) - X(s) \right\|_{\bV} ds
\\
\le &\int_0^t \! c_1 \left\| Y(s) - X(s) \right\|_{\bH}^2 - \frac{c_2}{2} \left\| Y(s) - X(s) \right\|_{\bV}^2 ds + \frac{2L^2}{c_2} \int_0^t \!\left\| \tilde{Y}_s - \tilde{X}_s \right\|_{0,\bH}^2 ds, 
\end{align*}
where $c_1\in\bR$ and $c_2>0$ by the coercivity assumption of operator $A$.

Put $c_1^+ = max\{0, c_1\}$, and $\tilde{c}_2 := \frac{c_2}{2} >0$. We have
\begin{align*}
&\max_{s\in [0,t]}\left\|Y(s) - X(s)\right\|_{\bH}^2 + \tilde{c}_2\int_0^t \! \|Y(s) - X(s)\|_{\bV}^2 ds
\\
\le &\int_0^t \! c_1^+ \left\| Y(s) - X(s) \right\|_{\bH}^2 ds + \frac{2L^2t}{c_2} \max_{s\in [0,t]}\left\| \tilde{Y}(s) - \tilde{X}(s) \right\|_{\bH}^2,  
\end{align*}
which, by Gr\"onwall's inequality, gives
\begin{align*}
&\max_{s\in [0,T_0]}\left\|Y(s) - X(s)\right\|_{\bH}^2 + \tilde{c}_2\int_0^{T_0} \! \|Y(s) - X(s)\|_{\bV}^2 ds
\\
\le &\text{ }\frac{2L^2T_0}{c_2} \max_{s\in[0,T_0]}\left\| \tilde{Y}(s) - \tilde{X}(s) \right\|_{\bH}^2 \cdot e^{\int_0^{T_0} \!  c_1^+  ds }
\\
\le &\text{ }\frac{L^2T_0}{\tilde{c}_2} \cdot e^{T_0 c_1^+} \max_{s\in [0,T_0]}\left\| \tilde{Y}(s) - \tilde{X}(s) \right\|_{\bH}^2.
\end{align*}
If we choose proper $T_0 >0$ such that $\frac{L^2T_0}{\tilde{c}_2} \cdot e^{T_0 c_1^+ } <1$, we can prove that the mapping $M$ is a contraction. By Banach fixed point theorem, it admits a unique fixed point. Such fixed point is the unique solution of \eqref{SDE} over the time interval $[0,T_0]$. Note that $T_0$ depends only on $c_1^+$, $\tilde{c}_2$, and $L$. Similarly, we may obtain the unique solution over time intervals $[T_0, 2T_0]$ and recursively after finite steps, we may obtain the unique solution over the whole time interval $[0,T]$. The proof is complete.

\end{proof}

Under assumption $(\cA1)$, the following assertions hold.
\begin{lem}\label{lemma3.1}
Let $(\mathcal{A}1)$ hold. Given $\theta\in\mathcal{U}$, the stochastic ordinary differential equation $(1.2)$ admits a unique solution, and there exists a constant $K>0$ such that, for any $0\le r\le t\le s\le T$, and $\xi\in L^0(\Omega, \mathcal{F}_r; \Lambda_r(\bH))$,

(i) the two processes $(X_s^{r,\xi;\theta})_{t\le s\le T}$ and $(X_s^{t,X_t^{r,\xi;\theta};\theta})_{t\le s\le T}$ are indistinguishable;

(ii) $max_{r\le l\le T} \| X^{r,\xi;\theta}(l) \|_{\bH}^2 + \int_r^T \! c_2 \cdot \|X^{r,\xi;\theta}(l)\|_{\bV}^2 dl \le K^2(1 + \| \xi \|_{0,\bH}^2)$ a.s.;

(iii) $d_{0,\bV^*}(X_s^{r,\xi;\theta}, X_t^{r,\xi;\theta}) \le K (1 + \|\xi\|_{0,\bH}) |s-t|^{1/2}$ a.s.;

(iv) given another $\hat{\xi}\in L^0(\Omega, \sF_r; \Lambda_r(\bH))$, 
\begin{align*}
&\max_{r\le l\le T} \| X^{r, \xi; \theta}(l) - X^{r, \hat{\xi}; \theta}(l) \|_{\bH}^2 + c_2 \int_r^T \! \left\|X^{r, \xi; \theta}(l) - X^{r, \hat{\xi}; \theta}(l)\right\|_{\bV}^2 dl  \le K^2 \| \xi - \hat{\xi} \|_{0,\bH}^2 \text{  a.s.;}
\end{align*}

(v) if we further assume $\xi\in L^0(\Omega,\sF_r;\Lambda_r(\bV))$ with $A\xi\in L^0(\Omega,\sF_r;\Lambda_r(\bH))$, then we have
\begin{equation*}
\max_{r\le l\le s} \left\| X^{r,\xi_r;\theta}(l) - \xi(r) \right\|_{\bH}^2 + c_2 \int_r^s \! \left\| X^{r,\xi_r;\theta}(l) - \xi(r) \right\|_{\bV}^2 dl \le K(1+\|A\xi(r)\|_{\bH}^2) \cdot |s-r|^2 \text{ a.s.;}
\end{equation*}

(vi) the constant $K$ only depends on the choice of $c$, $c_1^+$, $c_2$, $c_3$, $L$, and $T$, and is independent of the control process.
\end{lem}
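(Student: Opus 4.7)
The plan is to obtain existence/uniqueness for the SDE started at $(r,\xi)$ by reproducing the Picard/contraction argument of Theorem \ref{state_sol_wellposedness} verbatim on the interval $[r,T]$ (replacing the initial time $0$ by $r$ throughout), and then to establish (i)--(vi) in sequence, exploiting the Gelfand-triple structure, the coercivity/boundedness of $A$, and the boundedness/Lipschitz estimates in $(\mathcal{A}1)$. The flow property (i) is an immediate consequence of uniqueness: the restriction $(X_s^{r,\xi;\theta})_{s\ge t}$ solves \eqref{state-proces-contrl} on $[t,T]$ with initial path $X_t^{r,\xi;\theta}$, so it must coincide pathwise with $X_s^{t,X_t^{r,\xi;\theta};\theta}$.

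For (ii) and (iii) the key tool is the It\^o formula for $\|X(l)\|_{\bH}^2$ in the Gelfand-triple (\cite[Theorem 4.2.5]{prevot2007concise}). Coercivity extracts a $-c_2\|X(l)\|_{\bV}^2$ dissipation term, the boundedness $\|\beta\|_{\bH}\le L$ together with Young's inequality absorbs the cross term, and Gronwall's lemma yields (ii). For (iii), I would read the equation in $\bV^*$,
\[
X(l')-X(l)=\int_l^{l'}\bigl(AX(r)+\beta(r,X_r,\theta(r))\bigr)\,dr,
\]
use $\|AX(r)\|_{\bV^*}\le c_3\|X(r)\|_{\bV}$ together with Cauchy--Schwarz and (ii) to produce an $|l'-l|^{1/2}$ bound on $\sup_{l\le l'\le s}\|X(l')-X(l)\|_{\bV^*}$, and then combine this with the built-in $\sqrt{|t-s|}$ in the definition of $d_{0,\bV^*}$.

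For (iv), I would apply the It\^o formula to $\|Y(l)\|_{\bH}^2$ with $Y:=X^{r,\xi;\theta}-X^{r,\hat\xi;\theta}$; the Lipschitz estimate $\|\beta(s,X_s^{r,\xi;\theta},\theta(s))-\beta(s,X_s^{r,\hat\xi;\theta},\theta(s))\|_{\bV^*}\le L\|Y_s\|_{0,\bH}$ combined with Young's inequality lets me absorb the $\|Y(l)\|_{\bV}$ factor into the $c_2$-coercivity term, after which Gronwall yields the uniform $\bH$-bound and the integrated $\bV$-bound proportional to $\|\xi-\hat\xi\|_{0,\bH}^2$. For (v), the extra regularity $A\xi(r)\in\bH$ lets me treat $Z(l):=X(l)-\xi(r)$ as solving $Z'=AZ+A\xi(r)+\beta(l,X_l,\theta(l))$ with an $\bH$-valued forcing of size at most $\|A\xi(r)\|_{\bH}+L$; then It\^o on $\|Z\|_{\bH}^2$ plus Cauchy--Schwarz in $\bH$ gives the required inequality. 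Item (vi) is a bookkeeping check that none of the constants introduced depend on $\theta$, only on $c,c_1^+,c_2,c_3,L,T$.

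The main obstacle will be the $|s-r|^2$ order in (v): a direct Gronwall on $\|Z(l)\|_{\bH}^2$ only yields linear order in $(s-r)$, because the $\bH$-valued forcing enters linearly in $\|Z\|_{\bH}$. The trick is to pass to a differential inequality for $\|Z(l)\|_{\bH}$ itself, formally
\[
\frac{d}{dl}\|Z(l)\|_{\bH}\le \frac{c_1^+}{2}\|Z(l)\|_{\bH}+\|A\xi(r)\|_{\bH}+L,
\]
which integrates to a bound of order $(l-r)$ on $\|Z(l)\|_{\bH}$; squaring produces the desired $(s-r)^2$ order for the supremum term, and plugging this back into the energy inequality $\|Z(l)\|_{\bH}^2+c_2\int_r^l\|Z\|_{\bV}^2\,dr\le\cdots$ recovers the same order for the integral term. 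Care is required because $Z$ is only a.e. well defined as a $\bV$-valued function, so all the pointwise manipulations must be interpreted in the a.e. sense and justified via the It\^o formula for the Gelfand-triple rather than classical calculus.
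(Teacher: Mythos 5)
Your proposal is correct and, for the existence/uniqueness part and items (i)--(iv) and (vi), follows essentially the same route as the paper: contraction on $[r,T]$, uniqueness for the flow property, the Gelfand-triple It\^o formula for $\|\cdot\|_{\bH}^2$ with coercivity and Gronwall for (ii) and (iv), and the integral representation in $\bV^*$ with $\|Av\|_{\bV^*}\le c_3\|v\|_{\bV}$, Cauchy--Schwarz and (ii) for (iii).

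The one place you diverge is (v). You flag the $|s-r|^2$ order as the main obstacle and propose to fix it by passing to a differential inequality for the unsquared norm $\|Z(l)\|_{\bH}$, integrating to get a linear-in-$(l-r)$ bound, and squaring. That works (modulo the standard regularization needed to divide by $\|Z\|_{\bH}$ when it vanishes, which you acknowledge), but the paper shows the obstacle is not actually there: staying with the squared-norm energy identity, one splits $AX=A(X-\xi(r))+A\xi(r)$, bounds the forcing term by $2(\|A\xi(r)\|_{\bH}+L)\,|l-r|\,\max_\tau\|Z(\tau)\|_{\bH}$, and applies the weighted Young inequality $2ab\le \tfrac14 a^2+4b^2$ so that the $\max\|Z\|_{\bH}^2$ contribution is absorbed into the left-hand side with total coefficient $\tfrac12$, leaving a remainder of order $(1+\|A\xi(r)\|_{\bH}^2)|l-r|^2$; Gronwall then gives the quadratic rate directly, and the $\bV$-integral comes along for free from the retained coercivity term. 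Your route buys a more transparent ODE-style argument at the cost of an extra regularization step; the paper's buys a single uniform framework (squared-norm energy estimate plus Gronwall) for (ii), (iv) and (v) alike. Either is acceptable.
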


We postpone the cumbersome calculations for the proof to the appendix.


\begin{prop}\label{prop3.2}
Let $(\mathcal{A}1)$ hold.

(i) For each $t\in [0,T]$, $\epsilon\in (0, \infty)$, and $\xi \in L^0(\Omega,\sF_t; \Lambda_t(\bH))$, there exists $\overline{\theta}\in \mathcal{U}$ such that
\begin{equation*}
E\left[ J(t,\xi;\overline{\theta}) - V(t,\xi) \right] < \epsilon.
\end{equation*}

(ii) For each $(\theta, x_0)\in \mathcal{U}\times \bH$, $\left\{ J(t,X_t^{0,x_0;\theta}; \theta) - V(t, X_t^{0,x_0;\theta}) \right\}_{t\in [0,T]}$ is a supermartingale, i.e., for any $0\le t\le \tilde{t}\le T$,
\begin{equation}
V(t, X_t^{0,x_0;\theta}) \le E_{\sF_t} V(\tilde{t}, X_{\tilde{t}}^{0,x_0;\theta}) + E_{\sF_t} \int_t^{\tilde{t}} \! f(s, X_s^{0, x_0;\theta}, \theta_{s})ds, \text{a.s.. } \text{ } 
\end{equation}

(iii) For each $(\theta, x_0)\in \mathcal{U} \times \bH$, $\left\{ V(s, X_s^{0, x_0; \theta}) \right\}_{s\in [0,T]}$ is a continuous process.

(iv) With probability 1, $V(t,x)$ and $J(t,x;\theta)$ for each $\theta\in\mathcal{U}$ are continuous on $[0,T]\times \Lambda(\bH)$ and
\begin{equation*}
\sup_{(t,x)\in [0,T] \times \Lambda(\bH)} \max \left\{ | V(t,x_t) |, | J(t, x_t;\theta) | \right\} \le L(T+1) \text{ }\text{ }\text{ } a.s..
\end{equation*}

(v) There exists $L_V > 0$ such that for each $(\theta, t)\in \mathcal{U} \times [0,T]$,
\begin{equation*}
| V(t,x_t) - V(t,y_t) | + | J(t, x_t; \theta) - J(t, y_t; \theta) | \le L_V\| x_t - y_t \|_{0,\bH}, \text{ }\text{ }\text{ } a.s., \text{ }\text{ }\text{ } \forall x_t,y_t \in \Lambda_t(\bH),
\end{equation*}
with $L_V$ depending only on $T$ and $L$
\end{prop}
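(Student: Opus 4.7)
My plan is to establish the five assertions in the order (v), (iv), (iii), (i), (ii), since each feeds into the next. For (v), I fix $\theta\in\cU$ and two initial paths $x_t,y_t\in\Lambda_t(\bH)$. Assumption $(\cA 1)$(iii) supplies Lipschitz estimates for $f$ and $G$ in $\|\cdot\|_{0,\bH}$, and Lemma \ref{lemma3.1}(iv) controls $\sup_{l\in[t,T]}\|X^{t,x_t;\theta}(l)-X^{t,y_t;\theta}(l)\|_{\bH}$ by $K\|x_t-y_t\|_{0,\bH}$. Inserting these into \eqref{cost} gives $|J(t,x_t;\theta)-J(t,y_t;\theta)|\le L_V\|x_t-y_t\|_{0,\bH}$ with $L_V=L_V(L,K,T)$; the elementary bound $|\essinf J_1-\essinf J_2|\le\esssup|J_1-J_2|$ then transfers the estimate to $V$.

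Assertion (iv) follows quickly: boundedness is immediate from $(\cA 1)$(iii), giving $|J(t,x_t;\theta)|\le L(T+1)$ and hence the same bound for $V$. For joint $(t,x)$-continuity on $[0,T]\times\Lambda(\bH)$, I would combine the $x$-Lipschitz bound from (v) with continuity in $t$; the latter uses Lemma \ref{lemma3.1}(ii)--(iii) to bound $d_{0,\bV^*}(X^{r,\xi;\theta}_s,X^{r,\xi;\theta}_t)$ and the uniform $\bH$-estimate on the trajectory, and then $(\cA 1)$(ii) with dominated convergence gives continuity of $J(\cdot,\cdot;\theta)$; taking essinf preserves it. Assertion (iii) is a specialization: since $X^{0,x_0;\theta}$ is $\bH$-valued continuous by Theorem \ref{state_sol_wellposedness}, the map $s\mapsto X^{0,x_0;\theta}_s$ is $d_{0,\bH}$-continuous a.s., and composition with the continuous $V$ from (iv) yields the claim.

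For (i), I exploit the downward-directed (lattice) property of $\{J(t,\xi;\theta):\theta\in\cU\}$. Given $\theta_1,\theta_2\in\cU$, the event $\Lambda:=\{J(t,\xi;\theta_1)\le J(t,\xi;\theta_2)\}$ is $\sF_t$-measurable since each $J(t,\xi;\theta_i)$ is a $\sF_t$-conditional expectation, and the spliced control $\theta_3=\theta_1 1_{\Lambda}+\theta_2 1_{\Lambda^c}$ on $[t,T]$ lies in $\cU$ and satisfies $J(t,\xi;\theta_3)=J(t,\xi;\theta_1)\wedge J(t,\xi;\theta_2)$ by uniqueness in Theorem \ref{state_sol_wellposedness}. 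A standard selection argument then produces a sequence $\theta_n\in\cU$ with $J(t,\xi;\theta_n)\downarrow V(t,\xi)$ a.s.; the uniform bound from (iv) and dominated convergence yield $E[J(t,\xi;\theta_n)]\to E[V(t,\xi)]$, whence $\overline\theta:=\theta_n$ works for $n$ large.

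The main obstacle is (ii), the sub-DPP. Fix $(\theta,x_0)\in\cU\times\bH$ and $0\le t\le\tilde t\le T$. The lattice argument from (i), applied with respect to the random initial path $\xi=X^{0,x_0;\theta}_{\tilde t}$, yields a sequence $\theta_n\in\cU$ with $J(\tilde t,X^{0,x_0;\theta}_{\tilde t};\theta_n)\downarrow V(\tilde t,X^{0,x_0;\theta}_{\tilde t})$ a.s. Setting $\theta_n':=\theta 1_{[t,\tilde t)}+\theta_n 1_{[\tilde t,T]}\in\cU$ and using the flow identity in Lemma \ref{lemma3.1}(i) to obtain $X^{t,X^{0,x_0;\theta}_t;\theta_n'}|_{[\tilde t,T]}=X^{\tilde t,X^{0,x_0;\theta}_{\tilde t};\theta_n}$, the tower property gives
\begin{equation*}
J(t,X^{0,x_0;\theta}_t;\theta_n')=E_{\sF_t}\!\!\int_t^{\tilde t}\!f(s,X^{0,x_0;\theta}_s,\theta(s))\,ds+E_{\sF_t}\bigl[J(\tilde t,X^{0,x_0;\theta}_{\tilde t};\theta_n)\bigr].
\end{equation*}
Since $V(t,X^{0,x_0;\theta}_t)\le J(t,X^{0,x_0;\theta}_t;\theta_n')$ a.s., passing $n\to\infty$ by conditional dominated convergence delivers the inequality in (ii). The truly delicate point is producing a \emph{measurable} sequence $\theta_n$ that is $\epsilon$-optimal with respect to the \emph{random} initial path $\xi=X^{0,x_0;\theta}_{\tilde t}$; I would handle it by applying the lattice splicing simultaneously in $\xi$, leveraging the spatial Lipschitz regularity from (v) together with the $\sF_{\tilde t}$-measurability of $\xi$ to interpolate from $\epsilon$-optimal controls indexed over a countable dense subset of paths.
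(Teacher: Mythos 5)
Your treatment of (v), (i) and (ii) is essentially the paper's own argument: (v) from $(\cA 1)$ plus Lemma \ref{lemma3.1}(iv); (i) from the downward-directed (lattice) property of $\{J(t,\xi;\theta)\}_{\theta\in\cU}$ via control splicing and a monotone minimizing sequence; (ii) from concatenating $\theta$ on $[t,\tilde t)$ with an $\epsilon$-optimal sequence for the random initial path $X^{0,x_0;\theta}_{\tilde t}$, the flow property, and dominated convergence. Your closing worry about measurability in (ii) is not really an issue: assertion (i) is already stated for $\xi\in L^0(\Omega,\sF_t;\Lambda_t(\bH))$, so the lattice argument applies directly with $\xi=X^{0,x_0;\theta}_{\tilde t}$ and no interpolation over a countable dense set of paths is needed.

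The genuine gap is in your (iii)/(iv), and it comes from your chosen ordering. You place (iv) and (iii) before (ii) and obtain the time-continuity of $V$ by arguing that each $J(\cdot,\cdot;\theta)$ is continuous in $t$ and that ``taking essinf preserves it.'' That step fails: $V(t,\cdot)$ is an essential infimum over an uncountable family, so the exceptional null sets depend on $\theta$, and even setting that aside an infimum of continuous functions is only upper semicontinuous unless the continuity is uniform in $\theta$. Uniformity is precisely what is unavailable here, because $J(t,x_t;\theta)$ and $J(\tilde t,x_{\tilde t};\theta)$ are conditioned on different $\sigma$-algebras $\sF_t$ and $\sF_{\tilde t}$, and $\esssup_{\theta}|E_{\sF_t}[\cdot]-E_{\sF_{\tilde t}}[\cdot]|$ need not be small as $\tilde t\to t$. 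The paper circumvents this by first proving (ii), then showing that $t\mapsto E[V(t,X^{0,x_0;\theta}_t)]$ is continuous via a quantitative two-sided estimate, invoking the regularization theorem for supermartingales with continuous mean to get right-continuity of the process $V(t,X^{0,x_0;\theta}_t)$, and using BSDE theory for left-continuity; only then does (iv) follow from (iii) together with the spatial Lipschitz bound (v). You need to reorder your argument to (v), (i), (ii), (iii), (iv) and replace the ``essinf preserves continuity'' step with an argument of this supermartingale-regularization type (or some other mechanism that produces a good modification of $V$ in the time variable).
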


Again we postpone the proof to the Appendix. Then we prove the following dynamical programming principle.


 \begin{thm}\label{dynamic}
 Let $(\mathcal{A}1)$ hold. For any stopping times $\tau, \hat{\tau}$ with $\tau \le \hat{\tau} \le T$, and any $\xi \in L^0(\Omega, \sF_{\tau}; \Lambda^0_{\tau}(\bV))$, we have
 \begin{equation*}
 V(\tau, \xi) = \essinf_{\theta \in \mathcal{U}} E_{\sF_{\tau}} \left[ \int_{\tau}^{\hat{\tau}} \! f( s, X_s^{\tau, \xi; \theta}; \theta(s) )ds + V(\hat{\tau}, X_{\hat{\tau}}^{\tau, \xi; \theta}) \right] \text{ } \text{ } \text{ } a.s.
 \end{equation*}
 
\begin{proof}
This proof is similar to \cite[Theorem 3.4]{qiu2018viscosity}, but with some delicate compactness argument about path subspaces and the infinite dimensional expansion.

Denote the right hand side by $\overline{V}(\tau, \xi)$. By Proposition \ref{prop3.2} $(iv)$, $(v)$, we can see that both $V(\tau,\xi)$ and $\overline{V}(\tau,\xi)$ are lying in the space $\mathcal{S}^{\infty}(\Lambda(\bH); \mathbb{R})$ and thus the continuity indicates that it is sufficient to prove Theorem \ref{dynamic} when $\tau$, $\hat{\tau}$, and $\xi$ are deterministic.

For $\forall \text{ } \epsilon > 0$, by Proposition \ref{prop3.2} $(v)$, there exists $\delta = \epsilon/L_V >0$ s.t. whenever $|| x_{\hat{\tau}} -y_{\hat{\tau}} ||_{0,\bH} < \delta$ for $x_{\hat{\tau}}, y_{\hat{\tau}} \in \Lambda_{\hat{\tau}}(\bH)$, it holds that
\begin{equation*}
| J(\hat{\tau}, x_{\hat{\tau}}; \theta) - J(\hat{\tau}, y_{\hat{\tau}}; \theta) | + | V(\hat{\tau}, x_{\hat{\tau}}) - V(\hat{\tau}, y_{\hat{\tau}}) | \le \epsilon \text{ }\text{ }\text{ } a.s., \forall \text{ } \theta \in \mathcal{U}.
\end{equation*}

Note that $\Lambda_{\tau, \hat{\tau}}^{0, L; \xi}(\bV)$ is compactly embedded into $\Lambda_{\hat{\tau}}^{0}(\bH)$. There exists a sequence $\{ x^j \}_{j\in\mathbb{N}^+} \in \Lambda_{\tau, \hat{\tau}}^{0, L; \xi}(\bV)$ s.t. $\cup_{j\in\mathbb{N}^+}( \Lambda_{\tau, \hat{\tau}}^{0, L; \xi}(\bV) \cap B^{\bH}_{\delta/3}(x^j) ) = \Lambda_{\tau, \hat{\tau}}^{0, L; \xi}(\bV)$. Denote $D_1 = \Lambda_{\tau, \hat{\tau}}^{0, L; \xi}(\bV) \cap B^{\bH}_{\delta/3}(x^1)$, and
\begin{equation*}
D_j = (  B^{\bH}_{\delta/3}(x^j) - (  \cup_{i=1}^{j-1} B^{\bH}_{\delta/3}(x^i)  )  ) \cap \Lambda_{\tau, \hat{\tau}}^{0, L; \xi}(\bV), \text{ }\text{ }\text{ } j>1.
\end{equation*}
Then $\{D_j\}_{j\in\mathbb{N}^+}$ is a partition of $\Lambda_{\tau, \hat{\tau}}^{0, L; \xi}(\bV)$ with diameter $diam(D^j)<\delta$, i.e., $D^j \subset \Lambda_{\tau, \hat{\tau}}^{0, L; \xi}(\bV)$, $\cup_{j\in\mathbb{N}^+} D^j = \Lambda_{\tau, \hat{\tau}}^{0, L; \xi}(\bV)$, $D^i \cap D^j = \emptyset$ if $i \neq j$, and for any $x, y \in D^j$, $\| x - y \|_{0,\bH}<\delta$.

Then the rest of the proof is similar to  that of \cite[Theorem 3.4]{qiu2018viscosity}. For each $j\in\mathbb{N}^+$, take $\overline{x}^j \in D^j$, and a straight forward application of Proposition \ref{prop3.2} $(i)$ leads to some $\theta ^j \in \mathcal{U}$ satisfying
\begin{equation*}
0 \le J(\hat{\tau}, \overline{x}^j; \theta ^j) - V(\hat{\tau}, \overline{x}^j) := \alpha^j \text{ }\text{ }\text{ } a.s., with \text{ } E\left[ \alpha^j \right] < \frac{\epsilon}{2^j}.
\end{equation*}
Thus for each $x\in D^j$, with triangular inequality and results above, it holds that
\begin{align*}
&\text{ }\text{ }\text{ }\text{ } J(\hat{\tau}, x; \theta^j) - V(\hat{\tau}, x)
\\
&\le | J(\hat{\tau}, x; \theta^j) - J(\hat{\tau}, \overline{x}^j, \theta^j) | + | J(\hat{\tau}, \overline{x}^j; \theta^j) - V(\hat{\tau}, \overline{x}^j) | + | V(\hat{\tau}, \overline{x}^j) - V(\hat{\tau}, x) |
\\
&\le \epsilon + \alpha^j, \text{ }\text{ }\text{ } a.s..
\end{align*}

Further by the uniform boundedness of coefficient $\beta$ introduced by $(\mathcal{A}1)$ $(iii)$, for $\forall \text{ } \theta \in \mathcal{U}$, $X_{\hat{\tau}}^{\tau, \xi; \theta} \in\Lambda_{\tau, \hat{\tau}}^{0, L; \xi}(\bV)$ a.s. We then introduce the following control

\[
\tilde{\theta}(s) = 
\begin{cases}
      \theta(s), &\text{if} \text{ } s\in[0, \hat{\tau});\\
      \sum_{j\in\mathbb{N}^+} \theta^j(s)1_{D^j}(X_{\hat{\tau}}^{\tau, \xi; \theta}), &\text{if} \text{ } s\in [\hat{\tau}, T].
\end{cases}
\]
Then if follows that
\begin{align*}
V(\tau, \xi) &\le J(\tau, \xi; \tilde{\theta})
\\
&= E_{\sF_{\tau}} \left[  \int_{\tau}^{\hat{\tau}} \! f(  s, X_s^{\tau, \xi; \theta}, \theta(s)  )ds + J(\hat{\tau}, X_{\hat{\tau}}^{\tau, \xi; \theta}; \tilde{\theta})  \right]
\\
&\le E_{\sF_{\tau}} \left[  \int_{\tau}^{\hat{\tau}} \! f(  s, X_s^{\tau, \xi; \theta}, \theta(s)  )ds + V(\hat{\tau}, X_{\hat{\tau}}^{\tau, \xi; \theta}) + \sum_{j\in\mathbb{N}^+} \alpha^j  \right] + \epsilon,
\end{align*}
where ${\alpha^j}$ is independent of the choices of the control process $\theta$. Then take expectation on both hand side, we are going to obtain
\begin{align*}
EV(\tau, \xi) \le E\overline{V}(\tau, \xi) + 2\epsilon,
\end{align*}
which by the arbitrariness of $\epsilon$ and together with the obvious relation
\begin{equation*}
V(\tau, \xi) \ge \overline{V}(\tau, \xi),
\end{equation*}
yields the equality
\begin{equation*}
V(\tau, \xi) = \overline{V}(\tau, \xi) \text{ }\text{ }\text{ } a.s.
\end{equation*}

\end{proof}
\end{thm}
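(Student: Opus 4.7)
The plan is to prove the two inequalities $V(\tau,\xi)\ge \bar V(\tau,\xi)$ and $V(\tau,\xi)\le \bar V(\tau,\xi)$ separately, where $\bar V(\tau,\xi)$ denotes the right-hand side. By Proposition \ref{prop3.2}(iv)--(v), both $V$ and $\bar V$ are bounded and $\|\cdot\|_{0,\bH}$-Lipschitz in the path variable, so a standard approximation by step-function stopping times and simple random initial conditions reduces the problem to the case where $\tau\le \hat\tau$ are deterministic and $\xi\in\Lambda_\tau^0(\bV)$ is deterministic.

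The easy inequality $V\ge \bar V$ is obtained as follows. For any $\theta\in\cU$, the flow property of the state process (Lemma \ref{lemma3.1}(i)) combined with the tower property yields
\begin{equation*}
J(\tau,\xi;\theta)=E_{\sF_\tau}\left[\int_\tau^{\hat\tau}\! f(s,X_s^{\tau,\xi;\theta},\theta(s))\,ds+J(\hat\tau,X_{\hat\tau}^{\tau,\xi;\theta};\theta)\right].
\end{equation*}
Since $J(\hat\tau,X_{\hat\tau}^{\tau,\xi;\theta};\theta)\ge V(\hat\tau,X_{\hat\tau}^{\tau,\xi;\theta})$ a.s., taking the essential infimum over $\theta$ gives $V(\tau,\xi)\ge\bar V(\tau,\xi)$.

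The hard direction $V\le \bar V$ requires pasting near-optimal controls measurably. Fix $\epsilon>0$ and choose $\delta=\epsilon/L_V$ from Proposition \ref{prop3.2}(v). The crucial observation is that for every $\theta\in\cU$, the uniform bound $\|\beta\|_{\bH}\le L$ from $(\cA 1)$(iii) forces $X_{\hat\tau}^{\tau,\xi;\theta}\in \Lambda_{\tau,\hat\tau}^{0,L;\xi}(\bV)$ a.s., and this path space is \emph{compactly} embedded into $\Lambda_{\hat\tau}^0(\bH)$. Cover $\Lambda_{\tau,\hat\tau}^{0,L;\xi}(\bV)$ by countably many $\delta/3$-balls in the $\bH$-topology around a sequence $\{x^j\}$, and extract a Borel partition $\{D^j\}_{j\in\bN^+}$ of $\|\cdot\|_{0,\bH}$-diameter less than $\delta$. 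For each $j$ pick a representative $\bar x^j\in D^j$, and invoke Proposition \ref{prop3.2}(i) to obtain $\theta^j\in\cU$ with $0\le J(\hat\tau,\bar x^j;\theta^j)-V(\hat\tau,\bar x^j)=:\alpha^j$ satisfying $E[\alpha^j]<\epsilon/2^j$. Define the concatenated control
\begin{equation*}
\tilde\theta(s)=\theta(s)1_{[0,\hat\tau)}(s)+\sum_{j\in\bN^+}\theta^j(s)1_{D^j}(X_{\hat\tau}^{\tau,\xi;\theta})1_{[\hat\tau,T]}(s);
\end{equation*}
it is adapted because $\{X_{\hat\tau}^{\tau,\xi;\theta}\in D^j\}\in\sF_{\hat\tau}$. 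Using the Lipschitz estimates of $J(\hat\tau,\cdot;\theta^j)$ and $V(\hat\tau,\cdot)$ on $D^j$ together with the flow property, one obtains
\begin{equation*}
V(\tau,\xi)\le J(\tau,\xi;\tilde\theta)\le E_{\sF_\tau}\!\left[\int_\tau^{\hat\tau}\! f(s,X_s^{\tau,\xi;\theta},\theta(s))\,ds+V(\hat\tau,X_{\hat\tau}^{\tau,\xi;\theta})\right]+\epsilon+\sum_j\alpha^j.
\end{equation*}
Taking expectations, letting $\epsilon\downarrow 0$, and then taking essinf over $\theta$ on the right gives $E V(\tau,\xi)\le E\bar V(\tau,\xi)$; a localization to arbitrary $\sF_\tau$-measurable sets upgrades this to the a.s.\ statement.

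The principal obstacle is the measurable pasting of near-optimal controls: in the classical finite-dimensional state setting one uses local compactness of $\bR^d$, but here both the path-dependence and the infinite-dimensional state space destroy local compactness on $\Lambda_{\hat\tau}^0(\bH)$. The remedy is the compact embedding $\Lambda_{\tau,\hat\tau}^{0,L;\xi}(\bV)\hookrightarrow \Lambda_{\hat\tau}^0(\bH)$, which is precisely the structural reason for introducing the Gelfand triple and restricting to paths arising from $L^\infty$-bounded drifts; this provides the countable $\bH$-partition needed to build $\tilde\theta$, while the flow and Lipschitz estimates of Lemma \ref{lemma3.1} and Proposition \ref{prop3.2} handle the remaining estimates.
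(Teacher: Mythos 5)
Your proposal is correct and follows essentially the same route as the paper: reduction to deterministic $\tau,\hat\tau,\xi$ via the Lipschitz/boundedness estimates of Proposition \ref{prop3.2}, a countable small-diameter partition of $\Lambda_{\tau,\hat\tau}^{0,L;\xi}(\bV)$ obtained from its compact embedding into $\Lambda_{\hat\tau}^0(\bH)$, measurable pasting of $\epsilon/2^j$-optimal controls, and the flow property to close the estimate. The only cosmetic difference is that you spell out the easy inequality $V\ge\bar V$ via the tower property, which the paper simply calls the obvious relation.
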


\subsection{Existence of the viscosity solution}
Due to our path-dependence setting, we shall compose the random fields and stochastic differential equations by generalizing an It\^{o}-Kunita formula \cite[Pages 118-119]{kunita1981some}. Recall that for each $\phi\in\mathcal{C}_{\sF}^1$, $v\in U$, $t\in [0, T]$, $x_t\in\tilde\Lambda^0_{0,t}(\bV)$, we have
\begin{equation*}
\mathcal{L}^v\phi(t,x_t)=\mathfrak{d}_t\phi(t,x_t)+{_{\bV^*}\langle}Ax(t), \nabla\phi(t, x_t)\rangle_{\bV} +{_{\bV^*}\langle}\beta(t,x_t,v),\nabla\phi(t,x_t)\rangle_{\bV}.
\end{equation*}

\begin{lem}\label{lemma3.4}
Let Assumption ($\mathcal{A}1$) hold. Suppose $u\in\mathcal{C}^1_{\sF}$ with the associated partition $0=\underline{t}_0<\underline{t}_1<...<\underline{t}_n=T$, then for each $\theta\in\mathcal{U}$, it holds a.s. that for each $\underline{t}_j\le\rho\le\tau<\underline{t}_{j+1}$, $j=0, ..., n-1$ and $x_{\rho}\in \Lambda^0_{\rho}(\bV)$,
\begin{equation}\label{lemma3.4eq}
u(\tau, X_{\tau}^{\rho, x_{\rho}; \theta})-u(\rho, x_{\rho})=\int_{\rho}^{\tau} \! \mathcal{L}^{\theta(s)}u(s, X_s^{\rho, x_{\rho}; \theta})ds+\int_{\rho}^{\tau} \! \mathfrak{d}_{\omega} u(s, X_s^{\rho, x_{\rho}; \theta})dW(s) \text{ a.s.  }
\end{equation}

\end{lem}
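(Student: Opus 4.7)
The plan is a path-dependent It\^o--Kunita argument based on Riemann-sum approximation. Fix $j$ and restrict to $[\rho, \tau] \subset [\underline{t}_j, \underline{t}_{j+1})$; pick $\delta > 0$ with $\tau < \underline{t}_{j+1} - \delta$ so that the uniform H\"older constant $L_{\alpha}^{\delta}$ from Definition~\ref{cf1}(iii) applies on $[\rho, \tau]$, and abbreviate $X = X^{\rho, x_{\rho}; \theta}$. For a sequence of partitions $\pi_N: \rho = t_0^N < \cdots < t_N^N = \tau$ with $|\pi_N|\to 0$, telescope
\begin{align*}
u(\tau, X_{\tau}) - u(\rho, x_{\rho}) = H_N + V_N,
\end{align*}
where $H_N = \sum_i [u(t_{i+1}^N, X_{t_i^N, t_{i+1}^N - t_i^N}) - u(t_i^N, X_{t_i^N})]$ is the \emph{horizontal} piece (freeze the past, advance time) and $V_N = \sum_i [u(t_{i+1}^N, X_{t_{i+1}^N}) - u(t_{i+1}^N, X_{t_i^N, t_{i+1}^N - t_i^N})]$ is the \emph{vertical} piece (update the endpoint of the path).

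For $H_N$ I invoke Definition~\ref{cf1}(i) on each horizontally extended segment $X_{t_i^N, t_{i+1}^N - t_i^N}$ to represent the $i$-th summand as an It\^o integral, sum, and then use the $\alpha$-H\"older continuity of $\mathfrak{d}_s u$ and $\mathfrak{d}_{\omega} u$ in $\|\cdot\|_{0,\bV^*}$ together with the continuity estimate $\|X_s - X_{t_i^N, s-t_i^N}\|_{0,\bV^*} \le K(1+\|x_\rho\|_{0,\bH})|s-t_i^N|^{1/2}$ from Lemma~\ref{lemma3.1}(iii); dominated convergence and It\^o isometry then yield $H_N \to \int_{\rho}^{\tau}\mathfrak{d}_s u(s, X_s)\,ds + \int_{\rho}^{\tau}\mathfrak{d}_{\omega} u(s, X_s)\,dW(s)$. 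For $V_N$, insert the endpoint-perturbed path $(X_{t_i^N, t_{i+1}^N - t_i^N})^{h_i}$ with $h_i = X(t_{i+1}^N) - X(t_i^N) = \int_{t_i^N}^{t_{i+1}^N}[AX(s) + \beta(s, X_s, \theta(s))]\,ds$; the integral-form Gateaux expansion together with the H\"older continuity of $\nabla u$ writes each summand as ${_{\bV^*}\langle}\nabla u(t_{i+1}^N, X_{t_i^N, t_{i+1}^N - t_i^N}), h_i\rangle_{\bV}$ plus a Gateaux remainder $R_i^{(1)}$ bounded by $C L_{\alpha}^{\delta}\|h_i\|_{\bV^*}^{1+\alpha}$ and a bookkeeping residual $R_i^{(2)}$ coming from the comparison of $X_{t_{i+1}^N}$ with the pure endpoint perturbation. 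Substituting the integral form of $h_i$ and using continuity of $\nabla u$ on $(\underline{t}_j, \underline{t}_{j+1})$, the leading sum converges to $\int_\rho^\tau [{_{\bV^*}\langle} AX(s), \nabla u(s, X_s)\rangle_{\bV} + {_{\bV^*}\langle} \beta(s, X_s, \theta(s)), \nabla u(s, X_s)\rangle_{\bV}]\,ds$, and combining this with the horizontal limit delivers~\eqref{lemma3.4eq}.

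The principal obstacle is showing that the vertical residuals $\sum_i R_i^{(1)}$ and $\sum_i R_i^{(2)}$ vanish as $|\pi_N| \to 0$, and this is where the ODE (finite-variation) structure of the state equation combined with the unboundedness of $A: \bV\to\bV^*$ is decisive. Setting $\tilde h_i := \int_{t_i^N}^{t_{i+1}^N}\|AX + \beta\|_{\bV^*}\,ds$, the boundedness $\|Av\|_{\bV^*}\le c_3\|v\|_{\bV}$ together with Lemma~\ref{lemma3.1}(ii) and the uniform bound on $\beta$ gives $AX + \beta \in L^2(\rho, \tau; \bV^*)$, so $\sum_i \tilde h_i^2 \le |\pi_N|\,\|AX+\beta\|_{L^2(\rho,\tau;\bV^*)}^2 \to 0$; H\"older interpolation with exponents $2/(1+\alpha)$ and $2/(1-\alpha)$ then yields $\sum_i \tilde h_i^{1+\alpha} \to 0$, absorbing $R_i^{(1)}$. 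The genuine difficulty is $R_i^{(2)}$, because the exponent $\alpha\in(0,1)$ and the fact that $AX$ lives only in $\bV^*$ (not $\bH$) rule out the quadratic-variation trick used in the finite-dimensional diffusion case of \cite{qiu2022controlled}; the resolution exploits that the Gelfand pairing ${_{\bV^*}\langle} AX(s), \nabla u(s, X_s)\rangle_{\bV}$ is well posed via the uniform bound $\|\nabla u\|_{\bV}\le \rho$ from Definition~\ref{cf1}(ii), combined with the uniform modulus of continuity of $X$ in $\bV^*$ and the total-variation bound $\sum_i \tilde h_i \le \|AX+\beta\|_{L^1(\rho,\tau;\bV^*)}$, letting $\sum_i R_i^{(2)}\to 0$ as $\max_i \tilde h_i \to 0$.
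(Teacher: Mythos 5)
Your overall architecture (telescope into a horizontal piece handled by Definition~\ref{cf1}(i) and a vertical piece handled by the Gateaux derivative, then pass to the limit using the H\"older bounds of Definition~\ref{cf1}(iii) and the state estimates of Lemma~\ref{lemma3.1}) is the right one, and your treatment of $H_N$ and of the first-order remainder $R_i^{(1)}$ is sound: the interpolation $\sum_i \tilde h_i^{1+\alpha}\le\bigl(\sum_i\tilde h_i^2\bigr)^{(1+\alpha)/2}N^{(1-\alpha)/2}\lesssim N^{-\alpha}$ does kill $R^{(1)}$. The gap is exactly where you locate it, in $R_i^{(2)}$, and the resolution you sketch does not close it. Because you telescope along the \emph{true} path, the vertical summand compares $u(t_{i+1}^N,X_{t_{i+1}^N})$ with $u$ evaluated at the pure endpoint perturbation $(X_{t_i^N,\,t_{i+1}^N-t_i^N})^{h_i}$; these two paths differ on the whole interior interval $(t_i^N,t_{i+1}^N)$ by $\sup_s\|X(s)-X(t_i^N)\|_{\bV^*}\le\tilde h_i$. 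The only available control on $u$ in the path variable is $\alpha$-H\"older with $\alpha<1$, so $|R_i^{(2)}|\le L_\alpha^\delta\,\tilde h_i^{\alpha}$, and $\sum_i\tilde h_i^{\alpha}$ is of order $N^{1-\alpha}\bigl(\sum_i\tilde h_i\bigr)^{\alpha}\to\infty$ for a uniform partition (take $\tilde h_i\equiv V/N$ to see the bound is attained). Neither the total-variation bound $\sum_i\tilde h_i\le\|AX+\beta\|_{L^1(\rho,\tau;\bV^*)}$ nor the modulus of continuity of $X$ improves the exponent, so $\sum_iR_i^{(2)}$ is not shown to vanish.

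The paper avoids this residual altogether by never telescoping along the true path: it introduces the piecewise-constant approximation ${}^{N}X(t)=\sum_iX^{0,x;\theta}(t_i)1_{[t_i,t_{i+1})}(t)+X^{0,x;\theta}(\tau)1_{\{\tau\}}(t)$ and telescopes $u(t_{i+1},{}^{N}X_{t_{i+1}})-u(t_i,{}^{N}X_{t_i})=I_1^i+I_2^i$. Since ${}^{N}X$ is frozen on $[t_i,t_{i+1})$, the left limit ${}^{N}X_{t_{i+1}^-}$ \emph{is} the horizontal extension of ${}^{N}X_{t_i}$ (so $I_1^i$ is exactly the It\^o increment of Definition~\ref{cf1}(i)), and ${}^{N}X_{t_{i+1}}$ \emph{is} exactly the vertical perturbation ${}^{N}X_{t_{i+1}^-}^{X(t_{i+1})-X(t_i)}$ --- there is no interior discrepancy and hence no $R^{(2)}$. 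Moreover $I_2^i$ is written \emph{exactly}, with no Taylor remainder, as $\int_{t_i}^{t_{i+1}}\nabla u\bigl(t_{i+1},{}^{N}X_{t_{i+1}^-}^{X(s)-X(t_i)}\bigr)\bigl[AX(s)+\beta(s,X_s,\theta(s))\bigr]\,ds$ by the fundamental theorem of calculus along the absolutely continuous perturbation $s\mapsto X(s)-X(t_i)$. The price of the approximation is then paid only inside time-integrated integrands (where the uniform bound $\|{}^{N}X_t-X_t\|_{0,\bV^*}\lesssim N^{-1/2}$ plus dominated convergence suffices) and once at the final evaluation $u(\tau,{}^{N}X_\tau)\to u(\tau,X_\tau)$. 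You should restructure your argument the same way; as written, the claim $\sum_iR_i^{(2)}\to0$ is unsupported and, with only H\"older-$\alpha$ regularity of $u$, false as an estimate.
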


We will again postpone the proof to the Appendix.




\begin{thm}\label{exist}
Let $(\mathcal{A}1)$ hold and $\bV^A:= \{\eta\in \bV: A\eta\in \bH\}$ be dense in $\bV$. The value function V defined by \eqref{value} is a viscosity solution of the SPHJ equation \eqref{SPHJ}.
\begin{proof}
We prove the value function $V$ is both a viscosity supersolution and a viscosity subsolution.
Obviously by Proposition \ref{prop3.2} ($iv$), the value function $V\in\mathcal{S}^{\infty}(\Lambda(\bH); \mathbb{R})$.
 
\textbf{Step 1.} To prove $V$ is a viscosity subsolution, we only need to prove \eqref{subsol} holds. 

Suppose it does not hold, i.e. suppose for any $k\in\mathbb{N}^{+}$, $k\ge K_0$ for some $K_0\in\mathbb{N}^{+}$, there exists $\phi\in\underline{\mathcal{G}}V(\tau, \xi_{\tau}; \Omega_{\tau}, k)$ with $\tau\in\mathcal{T}^0$, $\Omega_{\tau}\in\sF_{\tau}$, $\mathbb{P}(\Omega_{\tau})>0$, and $\xi_{\tau}\in L^0(\Omega_{\tau}, \sF_{\tau}; \Lambda_{\tau}^0(\bV))$ such that there exists $\epsilon,\tilde \delta>0$, $\tau'\in\mathcal T^{\tau}_+$ and $\Omega'\subset\Omega_{\tau}$ satisfying $\mathbb{P}(\Omega')>0$, $\Omega'\subset \{\tau<\tau'\}$, and a.e. on $\Omega'$,
\begin{equation}
\essinf_{s\in [\tau, (\tau+\tilde{\delta}^2)\land \tau'], x\in B^{\bV^*}_{\tilde{\delta}} (\xi_{\tau})\cap\Lambda_{{\tau, s\land T}}^{0, k; \xi_{\tau}}(\bV)}   \left\{ -\mathfrak{d}_s \phi(s, x) - \mathcal{H}(s, x, \nabla\phi(s, x)) \right\} \ge 3\epsilon,
\end{equation}
Choose $k\in\mathbb{Z}^{+}$ s.t. $k>L$ and let $\hat{\tau}_k$ be the stopping time corresponding to the test function $\phi\in\underline{\mathcal{G}}V(\tau, \xi_{\tau}; \Omega_{\tau}, k)$.
Here for any $t\in[0,T]$, set $\xi(t) = \xi(\tau\land t)$ and we have $\xi\in\Lambda^0_{T}(\bV)$. 
 
 Recall that, for $\phi\in\mathcal C_{\sF}^1$, there is a partition $0=\underline t_0<\underline t_1<\ldots<\underline t_n=T$. W.l.o.g., we assume that there exists $\tilde{\delta}\in (0,1)$ with $2 \tilde \delta^2 <\min_{0\leq j\leq n-1} (\underline t_{j+1}-\underline t_j)$ such that $\Omega'=\{[\tau,\tau+2\tilde{\delta}^2] \subset [\underline t_j,\underline t_{j+1})\}$ for some $j\in\{0,\ldots, n-1\}$. W.l.o.g., we take $\hat\tau_k=\tau'$.
  
By the continuity assumption $(\cA 1)$ (ii) and the measurable selection theorem, there exists $\overline{\theta}\in\cU$ such that for almost all $\omega\in\Omega'$, it holds that
\begin{equation}\label{pick_theta}
- \cL^{\overline{\theta}(s)} \phi(s, \xi_{s}) - f(s, \xi_{s},\overline{\theta}(s)) \ge - \mathfrak{d}_s \phi(s, \xi_{s}) - \cH(s, \xi_{s}, \nabla \phi(s, \xi_{s})) - \frac{\epsilon}{2},
 \end{equation}
 for almost all $s$ satisfying $\tau\le s < (\tau+\tilde{\delta}^2)\land \hat{\tau}_k$.
 
By the continuity argument regarding the path dependence in Proposition \ref{prop3.2} (iv) and Definition \ref{cf1}, for each $h\in (0,\tilde{\delta}^2)$, $t\in [\tau,(\tau+h)\land T]$, there exist $\delta_1>0$ such that for any $\tilde{\xi}_{t}\in \Lambda_{t}^{0}(\bH)$ satisfying
\begin{equation*}
\left\| \xi_t - \tilde{\xi_t} \right\|_{0,\bH} \le \delta_1,
\end{equation*}
it holds that
\begin{equation*}
\left| \left( \phi - V \right)(t,\xi_t) - \left( \phi - V \right)(t,\tilde{\xi}_t) \right| \le \frac{\epsilon}{4}h.
\end{equation*}

Similarly by Lemma \ref{lemma3.1} (iv), for each $h\in (0,\tilde{\delta}^2)$, $t\in [\tau,(\tau+h)\land T]$, there exist $\delta_2>0$ such that for any $\overline{\xi}_{\tau}\in \Lambda_{\tau}^{0}(\bH)$ satisfying
\begin{equation*}
\left\| \xi_{\tau} - \overline{\xi}_{\tau} \right\|_{0,\bH} \le \delta_2,
\end{equation*}
it holds a.s. that
\begin{equation*}
\left| \left( \phi - V \right)(t,X_t^{\tau,\xi_{\tau};\overline{\theta}}) - \left( \phi - V \right)(t,X_t^{\tau,\overline \xi_{\tau};\overline{\theta}}) \right| \le \frac{\epsilon}{4}h.
\end{equation*}
  
By the denseness of $\bV^A$ in $\bV$ and the H\"older continuity argument in Remark \ref{r2.2}, for each $\epsilon >0$, there exists $\hat{\xi}$ with $\hat{\xi}(s) := \xi(s)1_{[0,\tau)}(s) + \hat{\xi}(\tau)1_{[\tau,T]}(s)$ such that,
\begin{enumerate}
\item [(1)] $\hat{\xi}(\tau)$ takes value in $\bV^A$,
\item [(2)] for each $\delta_1,\delta_2>0$ mentioned above, 
\begin{align}
\left\| \xi(\tau) - \hat{\xi}(\tau) \right\|_{\bV} \le &\left[\frac{\epsilon}{6(1+c_3)(1+\rho) (1+L+L_{\alpha}^{\delta}) (1+(L+1)(L+L_{\alpha}^{\delta}) + c_3L_{\alpha}^{\delta}\|\xi(\tau)\|_{\bV})} \right]^{\frac{1}{\alpha}} \nonumber
\\
&\land \delta_1\land \frac{\delta_2}{K+1}, \label{ini_path_conti}
\end{align}
where the constant $K>0$ is introduced in Lemma \ref{lemma3.1} (iv). Thus by Remark \ref{r2.2}, for almost all $\omega\in\Omega$ and all $\tau\le s< (\tau +h)\land\hat{\tau}_k$,
\begin{equation*}
\left| -\cL^{\overline{\theta}(s)}\phi(s,\xi_s) - f(s,\xi_s,\overline{\theta}(s)) + \cL^{\overline{\theta}(s)}\phi(s,\hat{\xi}_s) + f(s,\hat{\xi}_s,\overline{\theta}(s)) \right| \le \frac{\epsilon}{2}.
\end{equation*}
\end{enumerate}
Note that by Lemma \ref{lemma3.1} (iv), we have
 \begin{equation*}
 \max_{s\in [\tau,(\tau+h)\land \hat{\tau}_k]} \left\| X^{\tau,\xi_{\tau}\overline{\theta}}(s) - X^{\tau,\hat{\xi}_{\tau};\overline{\theta}}(s) \right\|_{\bH} \le K\|\xi - \hat{\xi}\|_{0,\bH}.
 \end{equation*}


By the dynamic programming principle, \eqref{pick_theta}, Lemma \ref{lemma3.4}, and the time continuous property of state process under $(\cA1)$ we have
\begin{align*}
0 \ge &\frac{1}{h}E_{\sF_{\tau}} \left[ (\phi-V)(\tau, \xi_{\tau}) - (\phi - V)( (\tau+h)\land\hat{\tau}_k, X^{\tau, \xi_{\tau}; \overline{\theta}}_{(\tau+h)\land\hat{\tau}_k} ) \right]
\\
= &\frac{1}{h}E_{\sF_{\tau}} \left[ (\phi-V)(\tau, \xi_{\tau}) - (\phi-V)(\tau, \hat{\xi}_{\tau}) + (\phi-V)(\tau, \hat{\xi}_{\tau}) - (\phi - V)( (\tau+h)\land\hat{\tau}_k, X^{\tau, \hat{\xi}_{\tau}; \overline{\theta}}_{(\tau+h)\land\hat{\tau}_k} ) \right]
\\
& -\frac{1}{h}E_{\sF_{\tau}} \left[  (\phi - V)( (\tau+h)\land\hat{\tau}_k, X^{\tau, \xi_{\tau}; \overline{\theta}}_{(\tau+h)\land\hat{\tau}_k} ) - (\phi - V)( (\tau+h)\land\hat{\tau}_k, X^{\tau, \hat{\xi}_{\tau}; \overline{\theta}}_{(\tau+h)\land\hat{\tau}_k} )  \right]
\\
\ge &-\frac{\epsilon}{2} + \frac{1}{h}E_{\sF_{\tau}} \left[ (\phi-V)(\tau, \hat{\xi}_{\tau}) - (\phi - V)( (\tau+h)\land\hat{\tau}_k, X^{\tau, \hat{\xi}_{\tau}; \overline{\theta}}_{(\tau+h)\land\hat{\tau}_k} ) \right]
\\
\ge &-\frac{\epsilon}{2} + \frac{1}{h}E_{\sF_{\tau}} \left[ \phi(\tau, \hat{\xi}_{\tau}) - \phi( (\tau+h)\land\hat{\tau}_k, X^{\tau, \hat{\xi}_{\tau}; \overline{\theta}}_{(\tau+h)\land\hat{\tau}_k} ) - \int_{\tau}^{(\tau+h)\land\hat{\tau}_k} \! f(s, X_s^{\tau, \hat{\xi}_{\tau}; \overline{\theta}}, \overline{\theta}(s))ds \right]
\\
= &-\frac{\epsilon}{2} + \frac{1}{h} E_{\sF_{\tau}} \int_{\tau}^{(\tau+h)\land{\hat{\tau}_k}} \! \left[ -\mathcal{L}^{\overline{\theta}(s)}\phi(s, X_s^{\tau, \hat{\xi}_{\tau}; \overline{\theta}}) - f( s, X_s^{\tau, \hat{\xi}_{\tau}; \overline{\theta}}, \overline{\theta}(s) ) \right]ds
\\
\ge &-\frac{\epsilon}{2} + \frac{1}{h} E_{\sF_{\tau}} \int_{\tau}^{(\tau+h)\land{\hat{\tau}_k}} \! \left[ -\mathcal{L}^{\overline{\theta}(s)}\phi(s, \xi_{s}) - f( s, \xi_{s}, \overline{\theta}(s) ) \right]ds
\\
&-\frac{1}{h} E_{\sF_{\tau}} \int_{\tau}^{(\tau+h)\land{\hat{\tau}_k}} \! \left| -\mathcal{L}^{\overline{\theta}(s)}\phi(s, \hat{\xi}_{s}) - f( s, \hat{\xi}_{s}, \overline{\theta}(s) ) + \mathcal{L}^{\overline{\theta}(s)}\phi(s, \xi_{s}) + f( s, \xi_{s}, \overline{\theta}(s) ) \right|ds
\\
& - \frac{1}{h} E_{\sF_{\tau}} \int_{\tau}^{(\tau+h)\land{\hat{\tau}_k}} \! \left| -\mathcal{L}^{\overline{\theta}(s)}\phi(s, \hat{\xi}_{s}) - f( s, \hat{\xi}_{s}, \overline{\theta}(s) ) + \mathcal{L}^{\overline{\theta}(s)}\phi(s, X_s^{\tau, \hat{\xi}_{\tau}; \overline{\theta}}) + f( s, X_s^{\tau, \hat{\xi}_{\tau}; \overline{\theta}}, \overline{\theta}(s) ) \right| ds
\\
\ge &-\frac{\epsilon}{2}+ 2\epsilon \cdot E_{\sF_{\tau}}\left[ \frac{(\tau+h)\land \hat\tau_k-\tau}{h} \right]\\
& -\frac{1}{h} E_{\sF_{\tau}} \int_{\tau}^{(\tau+h)\land{\hat{\tau}_k}} \!   \|X_s^{\tau,\hat{\xi}_{\tau};\overline{\theta}} - \hat{\xi}_s\|^{\alpha}_{0,\bV^*} ((L+1)(L+L_{\alpha}^{\delta})+c_3L_{\alpha}^{\delta}\|\hat{\xi}(s)\|_{\bV})  ds
\\
&-\frac{1}{h} E_{\sF_{\tau}} \int_{\tau}^{(\tau +h)\land\hat{\tau}_k} \! (L+L_{\alpha}^{\delta})(1+\rho)\|X_s^{\tau,\hat{\xi}_{\tau};\overline{\theta}} - \hat{\xi}_s\|_{0,\bH} + c_3\rho \|X^{\tau,\hat{\xi}_{\tau};\overline{\theta}}(s) - \hat{\xi}(s)\|_{\bV} ds
\\
\ge &-\frac{\epsilon}{2}+ 2\epsilon \cdot E_{\sF_{\tau}}\left[ \frac{(\tau+h)\land \hat\tau_k-\tau}{h} \right]\\
& -\frac{1}{h} E_{\sF_{\tau}} \int_{\tau}^{(\tau+h)\land{\hat{\tau}_k}} \!   K^{\alpha} (1+ \|\hat{\xi}\|_{0,\bH})^{\alpha} h^{\frac{\alpha}{2}} ((L+1)(L+L_{\alpha}^{\delta})+c_3L_{\alpha}^{\delta}\|\hat{\xi}(\tau)\|_{\bV})  ds
\\
&-\frac{1}{h} E_{\sF_{\tau}} \int_{\tau}^{(\tau +h)\land\hat{\tau}_k} \! (L+L_{\alpha}^{\delta})(1+\rho)\|X_s^{\tau,\hat{\xi}_{\tau};\overline{\theta}} - \hat{\xi}_s\|_{0,\bH} + c_3\rho \|X^{\tau,\hat{\xi}_{\tau};\overline{\theta}}(s) - \hat{\xi}(s)\|_{\bV} ds
\\
\ge &-\frac{\epsilon}{2}+ 2\epsilon \cdot E_{\sF_{\tau}}\left[ \frac{(\tau+h)\land \hat\tau_k-\tau}{h} \right]\\
&
-  E_{\sF_{\tau}} \Bigg[ K^{\alpha} (1+ \|\hat{\xi}\|_{0,\bH})^{\alpha} h^{\frac{\alpha}{2}} ((L+1)(L+L_{\alpha}^{\delta})+c_3L_{\alpha}^{\delta}\|\hat{\xi}(\tau)\|_{\bV})
\\
&\text{ }\text{ }\text{ }\text{ }\text{ }\text{ }\text{ }\text{ }\text{ }\text{ }\text{ }\text{ } - (L+L^{\delta}_{\alpha})(1+\rho)  K^{\frac{1}{2}}\cdot(1+\|A\hat{\xi}(\tau)\|_{\bH}) h
\\
&\text{ }\text{ }\text{ }\text{ }\text{ }\text{ }\text{ }\text{ }\text{ }\text{ }\text{ }\text{ } - \frac{c_3\rho}{\sqrt{c_2}h} \left(  E_{\sF_{\tau}} \left[ \int_{\tau}^{(\tau +h)\land\hat{\tau}_k} \! c_2\|X^{\tau,\hat{\xi}_{\tau};\overline{\theta}}(s) - \hat{\xi}(\tau)\|_{\bV}^2 ds  \right] \right)^{\frac{1}{2}}\cdot h^{\frac{1}{2}} \Bigg]
\\
\ge &-\frac{\epsilon}{2}+ 2\epsilon \cdot E_{\sF_{\tau}}\left[ \frac{(\tau+h)\land \hat\tau_k-\tau}{h} \right]\\
& - E_{\sF_{\tau}}\Bigg[ K^{\alpha} (1+ \|\hat{\xi}\|_{0,\bH})^{\alpha} h^{\frac{\alpha}{2}} ((L+1)(L+L_{\alpha}^{\delta})+c_3L_{\alpha}^{\delta}\|\hat{\xi}(\tau)\|_{\bV})
\\
&\text{ }\text{ }\text{ }\text{ }\text{ }\text{ }\text{ }\text{ }\text{ }\text{ }\text{ }\text{ } - (L+L^{\delta}_{\alpha})(1+\rho)  K^{\frac{1}{2}}(1+\|A\hat{\xi}(\tau)\|_{\bH}) h - \frac{c_3\rho}{\sqrt{c_2}} K^{\frac{1}{2}} \left( 1 + \| A\hat{\xi}(\tau) \|_{\bH}^2 \right)^{\frac{1}{2}} \cdot h^{\frac{1}{2}} \Bigg]
\\
\to &\frac{3\eps}{2} > 0 \text{, as } h\to 0^+. 
\end{align*}
This gives us a contradiction. Thus the value function $V$ is indeed a viscosity subsolution.

\textbf{Step 2.} To prove the value function $V$ is a viscosity supersolution, similarly we assume the opposite and argue with contradiction like what we did for proving viscosity subsolution.  Assume that for any $k\in\mathbb{N}^+$ with $k\ge K_0$ for some $K_0\in\mathbb{N}^+$, there exists $\phi\in\overline{\mathcal{G}}V(\tau, \xi_{\tau}; \Omega_{\tau}, k)$ with $\tau\in\mathcal{T}^0$, $\Omega_{\tau}\in\sF_{\tau}$, $P(\Omega_{\tau})>0$, and $\xi_{\tau}\in L^0(\Omega_{\tau}, \sF_{\tau}; \Lambda^0_{\tau}(\bV))$ such that there exist $\epsilon, \tilde{\delta}>0$, $\tau'\in \mathcal T^{\tau}_+$ and $\Omega'\in\sF_{\tau}$ with $\Omega'\subset \Omega_{\tau}$, $\Omega'\subset \{\tau<\tau'\}$, $P(\Omega')>0$, satisfying a.e. on $\Omega'$,
\begin{equation*}
\esssup_{  s\in(\tau, (\tau+\tilde{\delta}^2)\land \tau'], x\in B^{\bV^*}_{\tilde{\delta}}(\xi_{\tau})\cap\Lambda_{\tau, s\land T}^{0, k; \xi_{\tau}}(\bV)  }  \left\{  -\mathfrak{d}_s \phi(s,x) - \mathcal{H}(s, x, \nabla\phi(s,x))  \right\} \le - \epsilon.
\end{equation*}  

By Definition \ref{cf1}, for $\phi\in\mathcal C_{\sF}^1$, there is a partition $0=\underline t_0<\underline t_1<\ldots<\underline t_n=T$. W.l.o.g., we assume that there exists $\tilde{\delta}\in (0,1)$ such that $2 \tilde \delta ^2 <\min_{0\leq j\leq n-1} (\underline t_{j+1}-\underline t_j)$ and $\Omega'=\{[\tau,\tau+2\tilde{\delta}^2] \subset [\underline t_j,\underline t_{j+1})\}$ for some $j\in\{0,\ldots, n-1\}$. Also, w.l.o.g., we assume $\hat\tau_k=\tau'$ where $\hat{\tau}_k$ is the stopping time associated to the test function $\phi\in\overline{\mathcal{G}}V(\tau, \xi_{\tau}; \Omega_{\tau}, k)$.

For each $\theta\in\mathcal{U}$, define $\tau^{\theta} = \inf\{  s>\tau: X_s^{\tau, \xi_{\tau}; \theta}\notin B^{\bV^*}_{\tilde{\delta}}(\xi_{\tau})  \}$. Then for any $h\in(0, \frac{\tilde{\delta}^2}{4})$, by Chebyshev's inequality, we have
\begin{align*}
E_{\sF_{\tau}} [  1_{\tau^{\theta}<\tau+h}  ]
&=
      E_{\sF_{\tau}}\left[  1_{\max_{\tau\le s\le \tau+h}\|  X^{\tau, \xi_{\tau}; \theta}(s) - \xi(\tau) \|_{\bV^*} + \sqrt{h} > \tilde{\delta}  }  \right]
\\
&\le
      \frac{1}{(  \tilde{\delta} - \sqrt{h}  )^8} E_{\sF_{\tau}} \left[  \max_{\tau\le s\le \tau+h} \|  X^{\tau, \xi_{\tau}; \theta}(s) - \xi(\tau)  \|_{\bV^*}^8    \right]
\\
&\le
      \frac{K^8(1+\|\xi\|_{0,\bH})^8}{(  \tilde{\delta} - \sqrt{h}  )^8} \left( \sqrt{h}  \right)^8
\\
&\le
      \frac{256 K^8(1+\|\xi\|_{0,\bH})^8}{  \tilde{\delta}^8  } h^4
\\
&\le
      \tilde{C}h^4 \text{  a.s.}
\end{align*}
Here $K$ is the constant in Lemma \ref{lemma3.1}, being independent of the choice of control process $\theta$. Hence $\tilde{C}$ does not depend on the control process as well.

By the definition of $\overline{\mathcal{G}}V(\tau, \xi_{\tau}; \Omega_{\tau}, k)$, Lemma \ref{lemma3.4}, Theorem \ref{dynamic}, H\"older's inequality, Remark \ref{r2.2}, and the approximation above, we have for any $h\in(0, \frac{\tilde{\delta}^2}{4})$ and almost all $\omega\in\Omega_{\tau}$,
\begin{align*}
0
&=
      \frac{V(\tau, \xi_{\tau}) - \phi(\tau, \xi_{\tau})}{h}
\\
&=
      \frac{1}{h} \essinf_{\theta\in\mathcal{U}} E_{\sF_{\tau}} \left[  \int_{\tau}^{\hat{\tau}_k \land(\tau+h)} \! f(s,X_s^{\tau, \xi_{\tau}; \theta}, \theta(s))ds + V(  \hat{\tau}_k \land(\tau+h) ,X_{\hat{\tau}_k \land(\tau+h)}^{\tau, \xi_{\tau}; \theta}  ) - \phi(\tau, \xi_{\tau})  \right]
\\
&\ge
      \frac{1}{h} \essinf_{\theta\in\mathcal{U}} E_{\sF_{\tau}} \left[  \int_{\tau}^{\hat{\tau}_k \land(\tau+h)} \! f(s,X_s^{\tau, \xi_{\tau}; \theta}, \theta(s))ds + \phi(  \hat{\tau}_k \land(\tau+h) ,X_{\hat{\tau}_k \land(\tau+h)}^{\tau, \xi_{\tau}; \theta}  ) - \phi(\tau, \xi_{\tau})  \right]
\\
&=
      \frac{1}{h} \essinf_{\theta\in\mathcal{U}} E_{\sF_{\tau}}\left[  \int_{\tau}^{\hat{\tau}_k \land(\tau+h)}  \! (  \mathcal{L}^{\theta(s)} \phi(s, X_s^{\tau, \xi_{\tau}; \theta}) + f(s,X_s^{\tau, \xi_{\tau}; \theta}, \theta(s)) )ds  \right]
\\
&\ge
      \frac{1}{h} \essinf_{\theta\in\mathcal{U}} E_{\sF_{\tau}}\Bigg[  \int_{\tau}^{\tau^{\theta} \land(\tau+h)\land \hat{\tau}_k}  \! \left(  \mathcal{L}^{\theta(s)} \phi(s, X_s^{\tau, \xi_{\tau}; \theta}) + f(s,X_s^{\tau, \xi_{\tau}; \theta}, \theta(s)) \right)ds 
      \\
      &\text{ }\text{ }\text{ }-1_{  \{ \hat{\tau}_k>\tau^{\theta} \} \cap \{ \tau+h>\tau^{\theta} \}} \int_{\tau}^{(\tau+h)\land \hat{\tau}_k} \left|  \mathcal{L}^{\theta(s)} \phi(s, X_{s}^{\tau, \xi_{\tau}; \theta}) +  f(s,X_s^{\tau, \xi_{\tau}; \theta}, \theta(s)) \right| ds \Bigg]
\\
&\ge
      \frac{1}{h} \essinf_{\theta\in\mathcal{U}} E_{\sF_{\tau}}\Bigg[  \int_{\tau}^{(\tau+h)\land \hat{\tau}_k}  \! \left(  \mathcal{L}^{\theta(s)} \phi(s, X_{s\land\tau^{\theta}}^{\tau, \xi_{\tau}; \theta}) + f(s,X_{s\land\tau^{\theta}}^{\tau, \xi_{\tau}; \theta}, \theta(s)) \right)ds 
      \\
      &\text{ }\text{ }\text{ }-1_{  \{ \hat{\tau}_k>\tau^{\theta} \} \cap \{ \tau+h>\tau^{\theta} \}} \int_{\tau}^{(\tau+h)\land \hat{\tau}_k} \left|  \mathcal{L}^{\theta(s)} \phi(s, X_{s\land\tau^{\theta}}^{\tau, \xi_{\tau}; \theta}) +  f(s,X_{s\land\tau^{\theta}}^{\tau, \xi_{\tau}; \theta}, \theta(s)) \right|ds
             \\ 
             &\text{ }\text{ }\text{ }-1_{  \{ \hat{\tau}_k>\tau^{\theta} \} \cap \{ \tau+h>\tau^{\theta} \}} \int_{\tau}^{(\tau+h)\land \hat{\tau}_k} \left|  \mathcal{L}^{\theta(s)} \phi(s, X_{s}^{\tau, \xi_{\tau}; \theta}) +  f(s,X_{s}^{\tau, \xi_{\tau}; \theta}, \theta(s)) \right| ds \Bigg]
\\
&\ge
       E_{\sF_{\tau}}\left[  \frac{ ( \hat{\tau}^k \land (\tau+h) ) - \tau }{h} \right]\cdot \epsilon
       - \frac{1}{h} \esssup_{\theta\in\mathcal{U}} \left(  E_{\sF_{\tau}} \left[  1_{\{\tau+h>\tau^{\theta}\}}  \right] \right)^{\frac{1}{2}} 
      \\
      &\text{ }\text{ }\text{ }\cdot 
      \left(  E_{\sF_{\tau}} \left|  \int_{\tau}^{(\tau+h)\land \hat{\tau}_k} \left|  \mathcal{L}^{\theta(s)} \phi(s, X_{s\land\tau^{\theta}}^{\tau, \xi_{\tau}; \theta}) +  f(s,X_{s\land\tau^{\theta}}^{\tau, \xi_{\tau}; \theta}, \theta(s)) \right|ds  \right|^2  \right)^{1/2}
      \\
      &\text{ }\text{ }\text{ } -\frac{1}{h}\esssup_{\theta\in\mathcal{U}}\left(  E_{\sF_{\tau}} \left[  1_{\{  \tau+h>\tau^{\theta}  \}}  \right]  \right)^{1/2} 
      \\
      &\text{ }\text{ }\text{ }\cdot \left(  E_{\sF_{\tau}} \left|  \int_{\tau}^{(\tau+h)\land \hat{\tau}_k} \left|  \mathcal{L}^{\theta(s)} \phi(s, X_{s}^{\tau, \xi_{\tau}; \theta}) +  f(s,X_{s}^{\tau, \xi_{\tau}; \theta}, \theta(s)) \right| ds  \right|^2  \right)^{1/2}
\\
&\ge
       E_{\sF_{\tau}}\left[  \frac{ ( \hat{\tau}^k \land (\tau+h) ) - \tau }{h} \right]\cdot \epsilon 
      - 2h \tilde{C}^{1/2}\left(  E_{\sF_{\tau}} \left[  \int_{\tau}^{(\tau+h)\land\hat{\tau}_k} \! \left|  \zeta_s^{\phi} + c_3\rho \|X^{\tau,\xi_{\tau};\theta}(s)\|_{\bV} \right|^2 ds  \right]  \right)^{1/2}
\\
&\ge
           E_{\sF_{\tau}}\left[  \frac{ ( \hat{\tau}^k \land (\tau+h) ) - \tau }{h} \right]\cdot \epsilon
      - 2\sqrt{2}h \tilde{C}^{1/2} 
      \\
      &\text{ }\text{ }\text{ }\cdot\left(  E_{\sF_{\tau}} \left[  \int_{\tau}^{(\tau+h)\land\hat{\tau}_k} \! \left|  \zeta_s^{\phi} \right|^2 ds + \frac{c_3^2\rho^2}{c_2} \int_{\tau}^{(\tau+h)\land\hat{\tau}_k} \! c_2\|X^{\tau,\xi_{\tau};\theta}(s)\|_{\bV}^2 ds  \right]  \right)^{1/2}
\\
&\ge
     E_{\sF_{\tau}}\left[  \frac{ ( \hat{\tau}^k \land (\tau+h) ) - \tau }{h} \right]\cdot \epsilon
     \\
     &\quad
      - 2\sqrt{2}h \tilde{C}^{1/2}\left(  E_{\sF_{\tau}} \left[  \int_{\tau}^{(\tau+h)\land\hat{\tau}_k} \! \left|  \zeta_s^{\phi} \right|^2 ds + \frac{c_3^2\rho^2 K^2}{c_2} \left( 1 + \|\xi\|_{0,\bH}^2 \right)  \right]  \right)^{1/2}
\\
&\to
      \epsilon, \text{ as } h\to 0^+,
\end{align*}
Hence a contradiction occurs. Then the value function $V$ is proved to be a viscosity supersolution.


\end{proof}

\begin{rmk}
By Theorem \ref{state_sol_wellposedness}, the state process $X$ is continuous in $\bH$ but only $L^2$-integrable in $\bV$. Then the lack of uniform boundedness and continuity of $X$ in space $\bV$ make the H\"older continuity discussed in Remark \ref{r2.2} inapplicable in the proof in Step 1. To overcome it, we use the following assumption for the existence of viscosity subsolution
\begin{equation*}
\bV^A := \left\{ \eta\in\bV: A\eta\in\bH \right\} \text{ is dense in } \bV.
\end{equation*}
In fact, most of the differential operators we have encountered in partial differential equation theories, in particular the Laplacian, satisfy the above assumption (See Example \ref{example}, where $\bV^A = W_{0}^{2,2}(\mathcal{O})$.).

This is different from the finite dimensional case studied in \cite{qiu2022controlled}. However, this assumption is not necessary in proving the existence of viscosity supersolution. This is because the definition of the Hamilton function \eqref{Hamilton} imposes a one-sided preference of the inequalities, making the proof of the existence of viscosity subsolution harder than that of the viscosity supersolution.

\end{rmk}

\end{thm}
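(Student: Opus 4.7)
The plan is to prove the two inequalities separately, each by contradiction, using the dynamic programming principle (Theorem \ref{dynamic}) together with the path-dependent It\^o--Kunita formula (Lemma \ref{lemma3.4}). In both cases I first note that $V\in \cS^{\infty}(\Lambda(\bH);\bR)$ by Proposition \ref{prop3.2}(iv), so the essential limits in Definition \ref{def_vis_sol} are meaningful. Throughout I exploit the assumption that $\bV^A$ is dense in $\bV$, which is needed exclusively for the subsolution step to approximate initial paths $\xi(\tau)\in \bV$ by elements $\hat\xi(\tau)$ with $A\hat\xi(\tau)\in \bH$.

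For the subsolution property, I would assume toward contradiction that \eqref{subsol} fails: for some $k\geq K_0$, some $\phi\in \underline{\cG}V(\tau,\xi_{\tau};\Omega_{\tau},k)$, and some $\eps,\tilde\delta>0$, the essential infimum of $-\mathfrak{d}_s\phi - \cH$ is bounded below by $3\eps$ on a neighborhood of $(\tau,\xi_\tau)$. Using the measurable selection theorem together with the continuity in $(\cA 1)(ii)$, I would extract a control $\bar\theta\in\cU$ such that $-\cL^{\bar\theta(s)}\phi(s,\xi_s) - f(s,\xi_s,\bar\theta(s)) \geq -\mathfrak{d}_s\phi - \cH - \eps/2$. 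Then I would approximate $\xi(\tau)$ by $\hat\xi(\tau)\in\bV^A$ chosen small enough so that the Lipschitz estimates from Proposition \ref{prop3.2}(v), the H\"older continuity in Remark \ref{r2.2}, and Lemma \ref{lemma3.1}(iv) control all path-distance errors by $\eps/4 \cdot h$ on $[\tau,\tau+h]$. Applying the DPP to $(\phi-V)$, inserting the It\^o--Kunita expansion for $\phi(s,X_s^{\tau,\hat\xi_\tau;\bar\theta})$, and using Lemma \ref{lemma3.1}(v) to bound $\int_\tau^{\tau+h} c_2\|X^{\tau,\hat\xi_\tau;\bar\theta}(s)-\hat\xi(\tau)\|_{\bV}^2\,ds \le K(1+\|A\hat\xi(\tau)\|_{\bH}^2)h^2$, I would obtain that $0\ge 3\eps/2 + o(1)$ as $h\to 0^+$, the desired contradiction.

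For the supersolution property, I would assume to the contrary that $-\mathfrak{d}_s\phi - \cH \leq -\eps$ in a neighborhood for some test function $\phi\in\overline{\cG}V(\tau,\xi_\tau;\Omega_\tau,k)$. Here the supremum is over paths $x\in\Lambda_{\tau,s\wedge T}^{0,k;\xi_\tau}(\bV)$, and for each $\theta\in\cU$ the state process $X^{\tau,\xi_\tau;\theta}$ does lie in this path space a.s.\ since $\beta$ is uniformly bounded by $L<k$. To confine the process to the $\bV^*$-neighborhood I would introduce $\tau^{\theta}:=\inf\{s>\tau : X_s^{\tau,\xi_\tau;\theta}\notin B^{\bV^*}_{\tilde\delta}(\xi_\tau)\}$, and via Chebyshev's inequality and Lemma \ref{lemma3.1}(iii) bound $E_{\sF_\tau}[1_{\tau^\theta<\tau+h}]\le \tilde C h^4$ uniformly in $\theta$. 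Applying the DPP, the It\^o--Kunita formula for $\phi$, and H\"older's inequality to split the integrand on $\{\tau^\theta<\tau+h\}$ versus its complement, the uniform smallness of $E_{\sF_\tau}[1_{\tau^\theta<\tau+h}]^{1/2}$ together with the linear growth bound on $|\cL^{\theta(s)}\phi+f|\le \zeta_s^\phi+c_3\rho\|X^{\tau,\xi_\tau;\theta}(s)\|_{\bV}$ from Remark \ref{r2.2} produces $0\ge \eps + o(1)$ as $h\to 0^+$, a contradiction.

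The main obstacle is the infinite-dimensional, random, path-dependent setting: the state process is continuous in $\bH$ but only $L^2$ in $\bV$, while $Ax(s)$ takes values in $\bV^*$. Consequently the H\"older continuity in Remark \ref{r2.2} contains an unbounded factor $\|\bar x(t)\|_{\bV}$ that cannot be controlled uniformly along the state trajectory. In the supersolution proof this is circumvented by integrating $c_3^2\rho^2/c_2\int c_2\|X\|_{\bV}^2$, which Lemma \ref{lemma3.1}(ii) bounds by $K^2(1+\|\xi\|_{0,\bH}^2)$. The subsolution proof has no such slack because the one-sided essential infimum in the Hamiltonian \eqref{Hamilton} forces the control $\bar\theta$ to be chosen before taking the path limit; this is precisely why the hypothesis $\overline{\bV^A}=\bV$ enters, providing an approximating initial value with $\|A\hat\xi(\tau)\|_{\bH}<\infty$ so that Lemma \ref{lemma3.1}(v) can be invoked.
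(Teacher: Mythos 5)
Your proposal follows essentially the same route as the paper's proof: both halves argue by contradiction, combining the dynamic programming principle with the It\^o--Kunita formula of Lemma \ref{lemma3.4}, using the denseness of $\bV^A$ and Lemma \ref{lemma3.1}(v) to control the $\bV$-norm terms in the subsolution step, and the exit time $\tau^{\theta}$ with the Chebyshev bound $\tilde C h^4$ plus Lemma \ref{lemma3.1}(ii) in the supersolution step. The identification of where and why the hypothesis on $\bV^A$ enters also matches the paper's own remark, so I see no gap in the approach.
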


\section{Uniqueness}

\subsection{A weak comparison principle}
\begin{prop}\label{prop-cor-cmp}
Assume $(\cA1)$ holds and $u$ is a viscosity subsolution (resp. supersolution)  of SPHJ equation \eqref{SPHJ}. Then there is an infinite sequence of integers $1\leq \underline k_1<\underline k_2<\cdots<\underline k_n<\cdots$ (resp., $1\leq \overline k_1<\overline k_2<\cdots<\overline k_n<\cdots$), such that for each $i\in\bN^+$, $x(0)\in \bV$, $\phi_i\in\mathcal C^1_{\sF}$ satisfying 
$\phi_i(T,x)\geq (\text{resp. }\leq) G(x)$ for all $x\in\Lambda_T^0(\bH)$
\text{ }a.s. and  
\begin{align*}
\text{ess}\liminf_{(s,x)\rightarrow (t^+,y)}
	  \left\{  -\mathfrak{d}_{s}\phi_i(s,x)-\cH(s,x,\nabla \phi_i(s,x) ) \right\} \geq 0, \text{ a.s.,}
\\
\text{(resp. }
\text{ess}\limsup_{(s,x)\rightarrow (t^+,y)}
	 \left\{  -\mathfrak{d}_{s}\phi_i(s,x)-\cH(s,x,\nabla \phi_i(s,x) ) \right\} \leq 0\text{, a.s.)}
\end{align*}
for  each $t\in[0,T)$ with $y\in\Lambda_{0,t}^{0,\underline k_i;x(0)}(\bV)$ (resp., $y\in\Lambda_{0,t}^{0,\overline k_i;x(0)}(\bV)$), it holds a.s. that $u(t,x)\leq$ (resp., $\geq$) $\phi_i(t,x)$,  for each $t\in[0,T]$ with $x\in\Lambda_{0,t}^{0,\underline k_i;x(0)}(\bV)$ (resp. $x\in\Lambda_{0,t}^{0,\overline k_i;x(0)}(\bV)$).
\end{prop}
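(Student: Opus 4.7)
The strategy is a proof by contradiction using a linear-in-time penalization together with an optimal-stopping/touching argument adapted to the definition of $\underline{\cG}u(\cdot,\cdot\,;\cdot,\underline k_i)$.

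First, extract the sequence $\{\underline k_i\}_{i\in\bN^+}$ from the viscosity subsolution definition of $u$, fix $i\in\bN^+$, $x(0)\in\bV$, and a test functional $\phi_i\in\mathcal{C}_{\sF}^1$ satisfying the hypotheses. Suppose for contradiction there exist $t_0\in[0,T)$, $x_0\in\Lambda_{0,t_0}^{0,\underline k_i;x(0)}(\bV)$, and $\Omega_0\in\sF_{t_0}$ with $\bP(\Omega_0)>0$ on which $u(t_0,x_0)>\phi_i(t_0,x_0)$. Introduce the penalization $\phi_i^{\eta}(t,x):=\phi_i(t,x)+\eta(T-t)$ with $\eta>0$ small; then $\phi_i^{\eta}\in\mathcal{C}_{\sF}^1$, $\phi_i^{\eta}(T,\cdot)=\phi_i(T,\cdot)\ge G$ a.s., and the event $\{u(t_0,x_0)>\phi_i^{\eta}(t_0,x_0)\}$ still has positive probability. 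Because $\mathfrak d_t\phi_i^{\eta}=\mathfrak d_t\phi_i-\eta$ and $\nabla\phi_i^{\eta}=\nabla\phi_i$, the assumed inequality on $\phi_i$ strengthens to
\[
\mbox{ess}\liminf_{(s,x)\to(t^+,y)}\bigl\{-\mathfrak d_s\phi_i^{\eta}(s,x)-\cH(s,x,\nabla\phi_i^{\eta}(s,x))\bigr\}\ge \eta>0
\]
on the $\underline k_i$-path set, which is the inequality that will eventually be contradicted.

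Next, consider the optimal-stopping value
\[
\Psi^{\eta}(\tau,\xi):=\essinf_{\bar\tau\in\cT^{\tau}} E_{\sF_{\tau}}\Bigl[\inf_{y\in\Lambda_{\tau,\bar\tau}^{0,\underline k_i;\xi}(\bV)}(\phi_i^{\eta}-u)(\bar\tau,y)\Bigr],
\]
which is strictly negative on a set of positive probability by the contradiction hypothesis. Exploiting the compact embedding $\Lambda_{\tau,s}^{0,\underline k_i;\xi}(\bV)\hookrightarrow\Lambda_s^{0}(\bH)$ from \cite[Remark 2.3]{bayraktar2018path}, the $\bH$-path-continuity of $u$ and $\phi_i^{\eta}$ (Proposition \ref{prop3.2}(v) and Definition \ref{cf1}(iii)), and a classical Snell-envelope/measurable-selection argument, produce a stopping time $\tau^*\in\cT^{0}$, a set $\Omega^*\in\sF_{\tau^*}$ with $\bP(\Omega^*)>0$, and $\xi^*\in L^0(\Omega^*,\sF_{\tau^*};\Lambda_{\tau^*}^{0}(\bV))$ essentially realizing the infimum. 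The boundary inequality $\phi_i^{\eta}(T,\cdot)\ge u(T,\cdot)$ forces $\tau^*<T$ on $\Omega^*$, so that a $\hat\tau_{\underline k_i}\in\cT^{\tau^*}_{+}$ can be chosen inside the next subinterval of the regularity partition of $\phi_i$. Setting $c^*:=(\phi_i^{\eta}-u)(\tau^*,\xi^*)<0$ on $\Omega^*$ and $\tilde\phi:=\phi_i^{\eta}-c^*\in\mathcal{C}_{\sF}^1$, the optimality at $(\tau^*,\xi^*,\Omega^*)$ translates into $(\tilde\phi-u)(\tau^*,\xi^*)1_{\Omega^*}=0$ and yields $\tilde\phi\in\underline{\cG}u(\tau^*,\xi^*;\Omega^*,\underline k_i)$.

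Finally, the viscosity subsolution property of $u$ applied with the test function $\tilde\phi$ gives a.s.\ on $\Omega^*$
\[
\mbox{ess}\liminf_{(s,x)\to(\tau^{*+},\xi^*)}\bigl\{-\mathfrak d_s\tilde\phi(s,x)-\cH(s,x,\nabla\tilde\phi(s,x))\bigr\}\le 0,
\]
but the left-hand side coincides with that for $\phi_i^{\eta}$ (constants being invisible to $\mathfrak d_s$ and $\nabla$) and is thus at least $\eta>0$ by the first step, a contradiction. Hence $u\le\phi_i$ on $\Lambda_{0,t}^{0,\underline k_i;x(0)}(\bV)$ for every $t\in[0,T]$; the supersolution case is symmetric, using $\overline{\cG}u$, the opposite essential limsup inequality, and the penalization $\phi_i-\eta(T-t)$. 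The main obstacle lies in the second step: unlike the finite-dimensional analogue in \cite{qiu2022controlled}, the path space carries only a compact embedding into the coarser $\bH$-topology, both $u$ and $\phi_i$ are merely H\"older (not Lipschitz) in $\bV^*$, and $A$ may be unbounded from $\bV$ to $\bH$; extracting an actual $\sF_{\tau^*}$-measurable $\xi^*$ valued in $\Lambda_{\tau^*}^{0}(\bV)$ that realizes the essinf-of-expected-inf, and verifying the exact touching identity in $\underline{\cG}u(\cdot;\underline k_i)$ with a compatible $\hat\tau_{\underline k_i}$ inside the regularity partition, requires a delicate combination of optimal-stopping, measurable-selection, and denseness arguments tailored to the Gelfand triple, rather than the variable-doubling typical of deterministic path-dependent viscosity theory.
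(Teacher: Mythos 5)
Your proposal is correct and follows essentially the same route as the paper's proof: a contradiction argument combining a linear-in-time penalization, a Snell-envelope/optimal-stopping construction with measurable selection over the compactly embedded path spaces $\Lambda^{0,k;\xi}_{\tau,s}(\bV)$, a shift of the penalized test function to produce an element of $\underline{\cG}u$ (resp. $\overline{\cG}u$), and the viscosity inequality to close the contradiction. The only cosmetic difference is that your shift $c^*$ is an $\sF_{\tau^*}$-measurable random variable rather than a constant, so to keep $\tilde\phi\in\mathcal C^1_{\sF}$ one should subtract the martingale $E_{\sF_s}[c^*]$ as the paper does with $E_{\sF_s}Y^{\overline k_i}(\tau^{\overline k_i})$; this leaves $\mathfrak d_s$ and $\nabla$ unchanged, so your contradiction goes through verbatim.
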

\begin{proof}
We only need to prove the case when $u$ is a viscosity supersolution. For the viscosity subsolution the proof will be similar. 

Given $i\in\bN^+$, we assume that $u(t,\bar x_t)<\phi_i(t,\bar x_t)$ with a positive probability  at some point $(t,\bar x_t) $ with $t\in[0,T)$ and $\bar x_t\in \Lambda^{0,\overline k_i;x(0)}_{0,t}(\bV)$ for some $x(0) \in \bV$. 

Note that we do not need to include $t=T$ since, by Definition \ref{def_vis_sol}, the terminal condition clearly disagrees with our assumption here. Without any loss of generality, we assume $x(0)=0$. Then, there exists $\delta>0$ such that $\bP( \overline\Omega_t)>0$ with $\overline\Omega_t:=\{\phi_i(t,\bar x_t)-u(t,\bar x_t) >\delta  \}$.  




We know $\Lambda_{t,s}^{0,k;\xi}(\bV)$ is compactly embedded in the space $\Lambda_s^0(\bH)$. Then applying the measurable selection theorem, there exists $\xi^{\overline k_i}_t\in L^0\left(\overline\Omega_t,\sF_t;\Lambda^{0,{\overline k_i};0}_{0,t}(\bV) \right)$ such that
$$
\phi_i(t,\xi^{\overline k_i}_t)- u(t,\xi^{\overline k_i}_t) =\max_{x_t\in   \Lambda^{0,{\overline k_i};0}_{0,t}(\bV) } \left\{  \phi_i(t,x_t)-u(t,x_t) \right\}\geq \delta\text{ for almost all }\omega\in\overline\Omega_t.
$$ 
   W.l.o.g., we take $\overline\Omega_t=\Omega$ in what follows. For each $s\in(t,T]$, choose an $\sF_s$-measurable and $ \Lambda^{0,{\overline k_i};\xi^{\overline k_i}_t}_{t,s}(\bV)$-valued variable $\xi^{\overline k_i}_s$ such that  
\begin{align}
\left(\phi_i(s,\xi^{\overline k_i}_s)- u(s,\xi^{\overline k_i}_s)  \right)^+=\max_{x_s\in \Lambda^{0,{\overline k_i};\xi^{\overline k_i}_t}_{t,s}(\bV)   }   \left( \phi_i(s,x_s)-u(s,x_s) \right)^+ , \quad \text{a.s.,}\label{eq-maxima}
\end{align}  
and set
$Y^{\overline k_i}(s)
=
	(\phi_i(s,\xi^{\overline k_i}_s)-u(s,\xi^{\overline k_i}_s) )^+ +\frac{\delta (s-t)}{3(T-t)}
	$, and 
$Z^{\overline k_i}(s)
= 
	\esssup_{\tau\in\cT^s} E_{\sF_s} [Y^{\overline k_i}({\tau})]$.
Here, recall that  $\mathcal{T}^s$ denotes the set of stopping times valued in $[s,T]$. As $(\phi_i-u)^+\in \cS^2(\Lambda^0_{T}(\bH);\bR)$, there follows obviously the time-continuity of  
$$
	\max_{x_s\in \Lambda^{0,{\overline k_i};\xi^{\overline k_i}_t}_{t,s}(\bV)  }   \left(\phi_i(s,x_s)-u(s,x_s) \right)^+,
$$ 
and thus that of
$\left( \phi_i(s,\xi^{\overline k_i}_s)-u(s,\xi^{\overline k_i}_s) \right)^+$, although the continuity of process $(\xi^{\overline k_i}_s)_{s\in [t,T]}$ (as path space-valued process) in space $\bV$ might not be ensured. Therefore, the process $(Y^{\overline k_i}(s))_{t\leq s \leq T}$ has continuous trajectories. 
Define $\tau^{\overline k_i}=\inf\{s\geq t:\, Y^{\overline k_i}(s)=Z^{\overline k_i}(s)\}$ as a stopping time. In view of the terminal condition, it is obvious that 
\begin{equation*}
E_{\sF_t} \left[ Y^{\overline k_i}(T) \right] = \frac{\delta}{3} < \delta \le Y^{\overline{k}_i}(t).
\end{equation*}

Then by definition of $Z^{\overline{k}_i}(t)$ and the fact that $Y^{\overline{k}_i}(t)$ is $\sF_t$-measurable, we have
\begin{equation*}
Z^{\overline{k}_i}(t) \ge E_{\sF_t} \left[  Y^{\overline{k}_i}(t)  \right] = Y^{\overline{k}_i}(t).
\end{equation*}
Further with the optimal stopping theory, we have
$$
E_{\sF_t}Y^{\overline k_i}(T)
=\frac{\delta}{3}
	<\delta \leq 
	Y^{\overline k_i}(t) \leq Z^{\overline k_i}(t)=E_{\sF_t} \left[ Y^{\overline k_i}({\tau^{\overline k_i}}) \right] =E_{\sF_t}\left[Z^{\overline k_i}({\tau^{\overline k_i}})\right],
$$
which gives that $\bP(\tau^{\overline k_i}<T)>0$. As 
\begin{equation}
	\left(\phi_i(\tau^{\overline k_i},\xi^{\overline k_i}_{\tau^{\overline k_i}}) - u(\tau^{\overline k_i},\xi^{\overline k_i}_{\tau^{\overline k_i}}) \right)^+ 
	+\frac{\delta (\tau^{\overline k_i}-t)}{3(T-t)}
=Z^{\overline k_i}({\tau^{\overline k_i}}) \geq E_{\sF_{\tau^{\overline k_i}}}\left[Y^{\overline k_i}(T)\right]= \frac{\delta}{3},
\label{relation-tau}
\end{equation}
we have 
\begin{align*}
\bP\left(( \phi_i(\tau^{\overline k_i},\xi^{\overline k_i}_{\tau^{\overline k_i}}) - u(\tau^{\overline k_i},\xi^{\overline k_i}_{\tau^{\overline k_i}}))^+>0\right)>0. 
\end{align*}
 Define 
$$\hat\tau^{\overline k_i}=\inf\left\{s\geq\tau^{\overline k_i}:\, (\phi_i(s,\xi^{\overline k_i}_s)-u(s,\xi^{\overline k_i}_{s})  )^+\leq 0\right\}.$$ Obviously, $\tau^{\overline k_i}\leq\hat\tau^{\overline k_i}\leq T$.
 Put $\Omega_{\tau^{\overline k_i}}=\left\{\tau^{\overline k_i}<\hat\tau^{\overline k_i}\right\}$. Then $\Omega_{\tau^{\overline k_i}}\in \sF_{\tau^{\overline k_i}}$ and in view of relation \eqref{relation-tau}, and the definition of $\hat\tau^{\overline k_i}$, we have $\Omega_{\tau^{\overline k_i}} =\left\{\tau^{\overline k_i}<T\right\}$ and  $\bP(\Omega_{\tau^{\overline k_i}})>0$.

Set 
$$\Phi_i(s,x_s)=\phi_i(s,x_s)  +\frac{\delta (s-t)}{3(T-t)}-E_{\sF_s}Y^{\overline k_i}({\tau^{\overline k_i}})
.$$
 Then $\Phi_i\in\mathcal{C}^1_{\sF}$ since $\phi_i \in \mathcal{C}^1_{\sF}$. For each $\bar\tau\in\cT^{\tau^{\overline k_i}}$, 
 we have for almost all $\omega\in\Omega_{\tau^{\overline k_i}}$, 
\begin{align*}
\left(\Phi_i-u\right)(\tau^{\overline k_i},\xi^{\overline k_i}_{\tau^{\overline k_i}})
=0=Z^{\overline k_i}({\tau^{\overline k_i}})-Y^{\overline k_i}({\tau^{\overline k_i}})
&\geq 
E_{\sF_{\tau^{\overline k_i}}}\left[Y^{\overline k_i}({\bar\tau\wedge \hat{\tau}^{\overline k_i}}) \right] - Y^{\overline k_i}({\tau^{\overline k_i}})
\\
&\geq 
 E_{\sF_{\tau^{\overline k_i}}} \left[ \max_{y\in  \Lambda^{0,{\overline k_i};  \xi^{\overline k_i}_{\tau^{\overline k_i}}}_{\tau^{\overline k_i}, \bar\tau\wedge\hat\tau^{\overline k_i}}(\bV)} (\Phi_i-u)(\bar\tau\wedge\hat\tau^{\overline k_i},y)   \right],
\end{align*}
where we have used the obvious relation $\Lambda^{0,{\overline k_i};  \xi^{\overline k_i}_{\tau^{\overline k_i}}}_{\tau^{\overline k_i}, \bar\tau\wedge\hat\tau^{\overline k_i}}(\bV) \subset \Lambda^{0,{\overline k_i};\xi^{\overline k_i}_t}_{t, \bar\tau\wedge\hat\tau^{\overline k_i} }(\bV)$.
This together with the arbitrariness of $\bar\tau$ implies that $\Phi_i\in \overline{\cG} u\left(\tau^{\overline k_i},\xi^{\overline k_i}_{\tau_{\overline k_i}};\Omega_{\tau^{\overline k_i}},{\overline k_i}\right)$. In view of the correspondence between the viscosity supersolution $u$ and the infinite sequence $\left\{{\overline k_1}, \overline k_2,\cdots,\overline k_n,\cdots\right\}$ and Definition \ref{def_vis_sol},   we have for almost all $\omega\in\Omega_{\tau^{{\overline k_i}}}$, 
\begin{align*}
0
&\leq
	 \text{ess}\limsup_{(s,x)\rightarrow ((\tau^{\overline k_i})^+,\,\, \xi^{{\overline k_i}}_{\tau^{{\overline k_i}}})}
	  \left\{ -\mathfrak{d}_{s}\Phi_i(s,x)-\cH(s,x,\nabla \Phi_i(s,x) ) \right\}
\\
&=
	-\frac{\delta}{3(T-t)}
	+ \text{ess}\limsup_{(s,x)\rightarrow ((\tau^{{\overline k_i}})^+,\,\, \xi^{{\overline k_i}}_{\tau^{{\overline k_i}}})}
	  \left\{  
	-\mathfrak{d}_{s}\phi_i(s,x)-\cH(s,x,\nabla \phi_i(s,x)  ) \right\}
\\
&\leq -\frac{\delta}{3(T-t)} ,
\end{align*}
which is a contradiction.
\end{proof}

\begin{rmk}\label{rmk4.1}
This is an even weaker version of the weak comparison principle than that introduced in the finite dimensional case (see \cite[Proposition 4.1]{qiu2022controlled}). Under the infinite dimensional framework, we need to fix the initial state $x(0)$ in order to deal with the lack of compactness of the path space when taking the essential supremum in Lemma \ref{lem-approx} below. The proof of uniqueness of the viscosity solution will need to be adjusted accordingly. 
\end{rmk}

\subsection{Uniqueness}

Since $\bH$ is an infinite dimensional separable Hilbert space, it has a standard orthogonal basis $\{e_i\}_{i\in \bN^+}$ such that $\bH = \text{span}\{ e_1, e_2, e_3, \cdots \}$. Thus for any $h\in \bH$, its $d$-dimensional projection is defined as
\begin{equation*}
(^d{P})h = \sum_{i=1}^d  \langle e_i,  h \rangle \cdot e_i, \text{ }\text{ } d\in\bN^+.
\end{equation*}
Of course, such definition can be extended to space $\bV^*$ by defining
\begin{equation*}
{_{\bV^*}\langle} (^d{P})h^*, h \rangle_{\bV} = {_{\bV^*}\langle} h^*, (^d{P})h \rangle_{\bV},
\end{equation*}
for any $h\in \bV$, $h^*\in \bV^*$.

It is critical to point out that although by denseness, for any $i\in\bN^+$, $e_i\in \bV$ holds, $\{e_i\}_{i\in\bN^+}$ might not be the standard orthogonal basis for space $\bV$ because $\bV$ is only a Banach space, and for any $i\in\bN^+$, $\|e_i\|_{\bV} = 1$ does not necessarily hold. Let $  (^d{P})Ax(t) $, $(^d{P}) \beta(t, x_t, v)$ be such projection of $ Ax(t)$, $\beta(t, x_t, v)$ for any $v\in U$, $t\in [0,T]$, $x(t)\in \bV$, and $x_t\in\Lambda_t(\bV)$. Notice here we need to consider $Ax(\cdot)$ as a whole when we are computing the finite dimensional projection. Likewise, for the path dependence of $f(t,\cdot,v), \beta(t,\cdot,v), G(\cdot)$, we choose not to do projection of the path inside these functions to avoid changes in these functions every time the projection dimension $d$ is changed. Later we will see that the projection of the path process itself is not necessary.



Indeed one can argue that for any $t\in [0,T]$, $x(t)\in \bV$, the finite dimensional projection of the coefficient $Ax(t)$, which originally appears in the infinite dimensional cases, will then be combined with the projection of $\beta$. It does not hurt if we keep the projection of this term from the beginning in $\hat{\bH}^d$, where $\hat{\bH}^d := \text{span}\{e_1, e_2, \cdots, e_d\}$. Then the finite dimensional approximation method would ensure us a similar result in the infinite dimensional case.


We need to further introduce the following assumptions.

$(\cA 2)$ The linear operator $A$ satisfies
\begin{enumerate}
\item [(i)] $(^d{P})A = A (^d{P})$, for any $d\in\bN^+$;
\item [(ii)] there exists $L>0$ such that for all $x_T,\overline{x}_T\in \Lambda_T(\bV^*)$, and $t\in [0,T]$, there holds
\begin{align*}
&\esssup_{\omega\in\Omega} | G(x_T)-G(\overline{x}_T) | + \esssup_{\omega\in\Omega} \max_{v\in  U} | f(t, x_t, v)-f(t, \overline{x}_t, v) | 
\\
&+\esssup_{\omega\in\Omega} \max_{v\in  U} \| \beta(t, x_t, v)-\beta(t, \overline{x}_t, v) \|_{\bV^*} \le L(\| x_T-\overline{x}_T \|_{0,\bV^*} + \|x_t - \overline{x}_t\|_{0,\bV^*}).
\end{align*}
\item [(iii)] for each $v\in U$, $f(\cdot,\cdot,v)\in\cS^{\infty}\left( \Lambda(\bH);\bR \right)$ and $\beta(\cdot,\cdot,v)\in\cS^{\infty}\left( \Lambda(\bH);\bH \right)$.
\end{enumerate}
\begin{rmk}
In addition to the uniform Lipschitz assumption in $(\cA 1)$, introducing Assumption $(\cA 2)$ (ii) eliminates certain cases, for instance in Example \ref{example} when $\tilde{\beta}(\tilde{X}_t) = C\cdot D \tilde{X}(t)$, with $C$ being a constant vector and $D$ the usual gradient operator. This issue can be avoided if we further introduce a new Gelfand triple $\bV_1 \subseteq \bV\subseteq \bV_1^*$, where $\bV_1$ is another reflexive Hilbert space and is continuously, densely, and compactly embedded into space $\bV$. In that case, in addition to Example \ref{example}, we could further set $\bV_1:= W_0^{2,2}(\cO)$ and $\bV_1^*:=L_0^2(\cO)$ as an example.  
\end{rmk}

\begin{lem}\label{lem-approx}
 Let $(\cA1)$ and $(\cA 2)$ hold. For each $\eps>0$, there exist $d\in\bN^+$ and a partition $0=t_0<t_1<\cdots<t_{N-1}<t_N=T$ for some $N>3$ and functions 
\begin{align*}
(G^N,f^N, \beta^N) \in C^{1}(\bR^{ mN} \times \bH^{N+1};\bR)\times C( U;C^1([0,T]\times\bR^{ mN} \times \bH^{N+1};\bR)) 
\\
\times C(U;C^1([0,T]\times\bR^{ mN} \times \bH^{N+1};\bH))
\end{align*}

such that given $x(0)\in\bH$, for each $k \in \bN^+$, 
\begin{align*}
	&f^{\eps}_k(t) 
	\\
	:=&\esssup_{(x,v)\in \Lambda_{0,t}^{0,k;x(0)}(\bV)\times U}
		\left| f^N( W({t_1\wedge t}),\ldots, W({t_N\wedge t}),t,x({t_0\wedge t}),\ldots, x({t_N\wedge t}),v)-f(t,x,v)\right|,
\\
 	&\beta^{\eps}_k(t) 
	\\
	:= &\esssup_{(x,v)\in \Lambda_{0,t}^{0,k;x(0)}(\bV) \times U}
		\left\| ^d{P}\beta^N ( W({t_1\wedge t}),\ldots, W({t_N\wedge t}),t,x({t_0\wedge t}),\ldots, x({t_N\wedge t}),v) - \beta(t,x,v)\right\|_{\bV^*},
\\
&G^{\eps}_k
\\:=&\esssup_{x\in \Lambda_{0,T}^{0,k;x(0)}(\bV)} \left|G^N( W({t_1}),\ldots, W({t_N}),x(t_0),\ldots,x(t_N))-G(x) \right| \text{ are }\sF_t\text{-adapted with}				
\end{align*}
\begin{align}
	\left\| G^{\eps}_k  \right\|_{L^2(\Omega,\sF_T;\bR)} + \left\| f^{\eps}_k  \right\|_{L^2(\Omega\times[0,T];\bR)}   + \left\| \beta^{\eps}_k  \right\|_{L^2(\Omega\times[0,T];\bR)}    <\eps(1+k)\left( 1 + \|x(0)\|_{\bH} \right).
	\label{approx-error}
\end{align}
Moreover, $G^N$, $f^N$, and $(^{d}P)\beta^N$ are uniformly Lipschitz-continuous in the space variable $x$ in $\bH$ with an identical Lipschitz-constant $L_c$ independent of $N$, $k$, $d$, and $\eps$.
\end{lem}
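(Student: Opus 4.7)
The approximating coefficients $(G^N, f^N, \beta^N)$ will be built in three conceptual layers: a path--time discretization, a cylindrical approximation of the measurable randomness, and a final smooth mollification together with a finite-dimensional projection of $\beta$. The key technical input is a $\bV^*$-modulus of continuity on the compact set $\Lambda_{0,t}^{0,k;x(0)}(\bV)$ that scales linearly in $(1+\|x(0)\|_{\bH}+k)$, which is what drives the bound in \eqref{approx-error}.

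First, for $x\in\Lambda_{0,t}^{0,k;x(0)}(\bV)$, write $x(\tau)=x(0)+\int_0^\tau(Ax(r)+g(r))\,dr$ with $\|g\|_{L^\infty(0,T;\bH)}\le k$ and use the coercivity of $A$ to obtain a uniform bound $\int_0^T\|x(r)\|_\bV^2 dr\le C(1+\|x(0)\|_\bH^2+k^2)$. Together with the boundedness assumption on $A$ and Cauchy--Schwarz, this yields the uniform estimate
$$
\|x(\tau)-x(\tau')\|_{\bV^*}\le C\,(1+\|x(0)\|_\bH+k)\,|\tau-\tau'|^{1/2}.
$$
I then pick a partition $0=t_0<t_1<\cdots<t_N=T$ of mesh $\eta$ and let $\hat x^N$ denote the piecewise-constant interpolation through the partition nodes; the estimate above gives $\|x-\hat x^N\|_{0,\bV^*}\le C(1+\|x(0)\|_\bH+k)\sqrt\eta$ uniformly in $x$. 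The strengthened $\bV^*$-Lipschitz hypothesis in $(\cA 2)$(ii) then transfers this uniformly to the errors $|f(t,x,v)-f(t,\hat x^N,v)|$, $\|\beta(t,x,v)-\beta(t,\hat x^N,v)\|_{\bV^*}$, and $|G(x)-G(\hat x^N)|$.

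Next, once the path dependence has been collapsed to the finite collection $(x(t_0\wedge t),\ldots,x(t_N\wedge t))$, the remaining $\omega$-dependence is $\sF_t$-measurable, and since the filtration is generated by $W$, each of the resulting random variables can be approximated in $L^2(\Omega\times[0,T])$ by a continuous function of $(W(t_1\wedge t),\ldots,W(t_N\wedge t))$ via the martingale representation / density of cylindrical functionals. To make this approximation uniform in $(x,v)\in\Lambda_{0,t}^{0,k;x(0)}(\bV)\times U$, I fix a finite $\eps$-net for this set in $\bV^*\times U$ (available because $\Lambda_{0,t}^{0,k;x(0)}(\bV)\hookrightarrow\Lambda_t^0(\bH)$ is compact, hence totally bounded in $\bV^*$, and $U$ is compact), approximate pointwise on the net, and then interpolate using the uniform Lipschitz constant $L$ of $(\cA 2)$(ii). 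A standard mollification in the $\bR^{mN}\times\bH^{N+1}$-variables against a smooth compactly supported kernel produces $C^1$ representatives $G^N,f^N,\beta^N$ with a uniform Lipschitz constant $L_c$ that is a multiple of $L$ and independent of $N,d,k,\eps$. Finally, the projection $^dP$ is applied to $\beta^N$ with $d$ large enough that $\|(I-{}^dP)\beta(t,\hat x^N,v)\|_{\bV^*}<\eps$ uniformly, using that $\{(^dP)\}$ converges strongly to the identity on the relatively compact set of values that $\beta(t,\hat x^N,v)$ can assume.

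Collecting the three error contributions gives, for each of the quantities appearing in \eqref{approx-error}, a bound of the form $C(\sqrt\eta+\text{(cylindrical error)}+\text{(mollification error)}+\text{(projection error)})(1+k)(1+\|x(0)\|_\bH)$, which can be made smaller than $\eps(1+k)(1+\|x(0)\|_\bH)$ by first fixing $\eta$ small, then $N$ and $d$ large, and finally the mollification scale small. The main obstacle is the simultaneous uniformity in $(x,v)$ while preserving the $(1+k)$-scaling: the $\omega$-approximation is only naturally pointwise in $(x,v)$, and the essential supremum is over a set with no useful structure in $\bH$. It is precisely the combination of the $\bV^*$-Lipschitz hypothesis in $(\cA 2)$(ii) with the $\bV^*$-Hölder modulus from Step 1 that makes the scaling work cleanly; the weaker $\bH$-Lipschitz of $(\cA 1)$(iii) alone would not control the dependence on $k$.
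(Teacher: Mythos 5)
Your architecture matches the paper's: (1) collapse the path dependence to finitely many node values using a $\bV^*$-H\"older modulus on $\Lambda_{0,t}^{0,k;x(0)}(\bV)$, (2) replace the measurable $\omega$-dependence by cylindrical functions of $W$, (3) project $\beta$ onto $\hat\bH^d$ and prove uniform convergence of $({}^dP-I)$ on the $\bH$-bounded (hence $\bV^*$-relatively-compact) range of $\beta$. Your Step 1 is in fact more careful than the paper's: the paper cites its state-process estimate (Lemma 3.2(iii), proved for $\|\beta\|_\bH\le L$), whereas the generic element of $\Lambda_{0,t}^{0,k;x(0)}(\bV)$ has forcing $\|g\|_{L^\infty}\le k$, and your direct coercivity computation is exactly what produces the $(1+k)(1+\|x(0)\|_\bH)$ scaling in \eqref{approx-error}. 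Your Step 3 is the same Arzel\`a--Ascoli-type argument (uniform boundedness of $\|{}^dP\|_{\cL(\bV^*)}$, pointwise convergence, compactness of $B_L^{\bH}(0)$ in $\bV^*$) that the paper carries out.

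Two points in your Step 2 are genuine gaps. First, the quantifier order: the lemma requires a single $(d,N,G^N,f^N,\beta^N)$ chosen \emph{before} $x(0)$ and $k$ are given, with only the error bound allowed to grow in $(k,\|x(0)\|_{\bH})$. Your finite $\eps$-net is a net of $\Lambda_{0,t}^{0,k;x(0)}(\bV)\times U$, so the resulting interpolant depends on $k$ and $x(0)$; you would only prove the weaker statement ``for each $\eps,k,x(0)$ there exist $d,N,\ldots$''. The paper avoids this by performing the simple-function approximation at the level of the $C(\Lambda_{t_j}(\bV^*)\times U;\bR)$-valued random variable $\omega\mapsto f(\omega,t_j,\cdot,\cdot)$ (its cited Lemma 1.3 of Da Prato--Zabczyk), which is uniform over all $(x,v)$ and hence independent of $k$ and $x(0)$; the $(k,x(0))$-dependence then enters only through the node-interpolation error of Step 1. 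Your argument should be rearranged the same way. Second, ``a standard mollification in the $\bR^{mN}\times\bH^{N+1}$-variables against a smooth compactly supported kernel'' does not exist: $\bH$ is infinite dimensional and there is no translation-invariant reference measure to convolve against. To get the $C^1$ representatives you must either inherit smoothness from the cylindrical approximation itself (as the paper does through its choice of the $\tilde f_i^j$ and the smooth functions $g_i^j$ of the Brownian increments), use an inf/sup-convolution (Lasry--Lions) regularization valid in Hilbert spaces, or mollify only in the finite-dimensional projected coordinates. With these two repairs the proof goes through and yields the stated uniform Lipschitz constant $L_c$.
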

Note that since $\beta(t, x, v)$ are of infinite dimensions, the proof of Lemma \ref{lem-approx} is not as trivial as the finite dimensional case (\cite[Lemma 4.2]{qiu2022controlled}). Not only do we need to perform approximations of $\beta(t,x,v)$ regarding the path dependence applying the partition in $t$, we are also using finite dimensional projection of the infinite dimensional functions and thus a uniform convergence of such projections is required to make sense of the finite dimensional approximation method. As for the coefficients $f$ and $G$, the uniform convergence is also required because of the infinite dimensional path dependence, regardless of the fact that these coefficients themselves take value in $\bR$.

  
\begin{proof}[Sketched proof of Lemma \ref{lem-approx}]
We consider the approximations for the function $f$ first. For any $l\in\bN^+$, let us introduce the time partition of $[0,T]$ as $0=t_0\le t_1\le \cdots \le t_l = T$. Then for any $\omega\in\Omega$, $t\in [0,T]$, $v\in U$, $x_t\in\Lambda_{0,t}^{0,k;x(0)}(\bV)$, by assumption $(\cA 2)$ (iii) and dominated convergence theorem, the following approximation works as $l\to\infty$
\begin{equation*}
f(\omega,t,x_t,v) \sim f(\omega,0,x_0,v) 1_{[0,t_1]}(t) + \sum_{j=1}^{l-1} f(\omega,t_j,x_{t_j},v) 1_{(t_j,t_{j+1}]}(t), 
\end{equation*}
where the time-continuity introduced in assumption $(\cA 2)$ (iii) can be further relaxed to left continuity with right limit.

Note here the right hand side is not time continuous. We will work on this issue later. Next, by \cite[Lemma 1.3]{da2014stochastic}, for any $j\in\bN^+$, $\omega\in\Omega$, $t_j\in [0,T]$, $x_{t_j}\in\Lambda_{0,t_j}^{0,k;x(0)}(\bV)$, and $v\in U$, we have
\begin{equation*}
f(\omega,t_j,x_{t_j},v) \sim \sum_{i=1}^{l_j} 1_{A_i^j}(\omega) \tilde{f}_i^j(x_{t_j},v), 
\end{equation*}
with $A_i^j\in\sF_{t_j}$, $i=1,2,\cdots,l_{t_j}$, $\tilde{f}_i^j \in C\left(U; C^1\left(\Lambda_{0,t_j}^{0,k;x(0)}(\bV)\right)\right)$.
For each $j\in\bN^+$, $A_i^j$ are not necessarily disjoint. Of course, since by $(\cA 1)$ $f$ is uniformly bounded and Lipschitz continuous in the path variable, $\tilde{f}_i^j$ functions can be chosen to satisfy such assumptions as well, i.e. for any $v\in U$ and any $t\in [0,T]$, $x_t,y_t \in \Lambda_{0,t}^{0,k;x(0)}(\bV)$ with $y(0)=x(0)$,
\begin{align*}
\left| \tilde{f}_i^j(v,x_t) \right| &\le L+1,
\\
\left|\tilde{f}_i^j(v,x_t) - \tilde{f}_i^j(v,y_t) \right| &\le (L+1)\| x_t - y_t \|_{0,\bV^*}.
\end{align*} 
Then, by \cite[Lemma 4.3.1]{oksendal2003stochastic}, we have the following approximation in $L^2(\Omega,\sF_{t_j})$
\begin{equation*}
1_{A_i^j}(\omega) \sim g_i^j\left( W(\tilde{t}_1),\cdots, W(\tilde{t}_{l_i^j}) \right),
\end{equation*}
where $g_i^j\in C_c^{\infty} \left( \bR^{m\times l_i^j} \right)$, $\tilde{t}_r \in [0,t_j]$, $r = 1,2,\cdots,l_i^j$.

For the approximation of $\tilde{f}_i^j$ regarding the path dependence, let us introduce the following path approximation. For any $t_j\in [0,T]$ and $s\in [0,t_j]$, the following point-wise convergence holds
\begin{equation*}
x_{t_j}(s) \sim (P^M)(x_{t_j})(s),
\end{equation*} 
where
\begin{equation}\label{state appro}
(P^M)(x_{t_j})(s) := \sum_{n=1}^{2^M} x_{t_j} \left( \frac{(n-1)t_j}{2^M} \right) 1_{\left[ \frac{(n-1)t_j}{2^M} , \frac{nt_j}{2^M} \right)}(s) + x_{t_j}(t_j) 1_{\{t_j\}}(s).
\end{equation}
Then by $(\cA 2)$ and Lemma 3.2 $(iii)$, we have for any $t_j\in [0,T]$ and any $v\in U$,
\begin{align}
&\left| \tilde{f}_i^j(v, x_{t_j}) - \tilde{f}_i^j(v,(P^M)(x_{t_j})) \right| \nonumber
\\
\le &\left\{(L+1)\left\| x_{t_j} - (P^M)(x_{t_j}) \right\|_{0,\bV^*} \right\} \nonumber
\\
= &\left\{(L+1) \sup_{s\in[0,t_j]} \left\| x_{t_j}(s) - (P^M)(x_{t_j})(s) \right\|_{\bV^*} \right\} \nonumber
\\
= &\left\{ (L+1) \sup_{n} \sup_{s\in \left[ \frac{(n-1)t_j}{2^M}, \frac{nt_j}{2^M} \right)} \left\| x_{t_j}(s) - x\left(\frac{(n-1)t_j}{2^M}\right) \right\|_{\bV^*} \right\} \nonumber
\\
\le &\left\{ (L+1)K\cdot \left(\frac{t_j}{2^M}\right)^{\frac{1}{2}} (1+\|x(0)\|_{\bH}) \right\} \label{f_uniform}
\\
\to &0, \text{ as } M\to\infty\nonumber.
\end{align}

In addition, for any $t\in [0,T]$, $1_{(t_j,t_{j+1}]}(t)$ can be increasingly approximated by compactly-supported smooth functions $\phi^j(t)$.

Let $l=N$. To sum up, the function $f$ may be approximated by functions of the following form:
\begin{align*}
&f^N( W(t_1 \land t),\cdots, W(t_N \land t), t, x(0), x(t_1 \land t), \cdots, x(t_N \land t), v )
\\
= &\sum_{j=1}^{N} \sum_{i=1}^{N_j} g_i^j \left( W(t_1), \cdots, W(t_j) \right) \tilde{f}_i^j(v, x(0), x(t_1\land t), \cdots, x(t_j\land t)) \phi^j \left(t\right).
\end{align*}
The required approximation for $G$ can be obtained in a similar but simplified way since it does not have time dependence. Note the approximation error above is given by $\eps (1+k)(1+\|x(0)\|_{\bH})$ and the $k$-dependence comes from the path space.

Therefore it is easy to prove that for any $x_t \in \Lambda_{0,t}^{0,k;x(0)}(\bV)$ and any $N>1$, there exists $\eps>0$ such that
\begin{equation*}
\| G_k^{\eps} \|_{L^2(\Omega,\sF_T;\bR)} + \| f_k^{\eps} \|_{L^2(\Omega\times [0,T];\bR)} < \eps(1+k)(1+\|x(0)\|_{\bH}).
\end{equation*}
For the function $\beta$, notice for any $x_t \in \Lambda_{0,t}^{0,k;x(0)}(\bV)$, $\beta$ is uniformly bounded in $\bH$, which is dense and compactly embedded in $\bV^*$. Denote by $I$ the $identity\text{ }operator$ in $\bV^*$, we have
\begin{align*}
\| (^d{P})\beta^N - \beta \|_{\bV^*} &= \| (^d{P})\beta^N - (^d{P})\beta + (^d{P} - I)\beta \|_{\bV^*}
\\
& \le \| (^d{P})\beta^N - (^d{P})\beta \|_{\bV^*} + \| (^d{P} - I)\beta \|_{\bV^*}
\\
& \le \| \beta^N - \beta \|_{\bV^*} + \| (^d{P} - I)\beta \|_{\bV^*}
\\
& = B_1 + B_2
\end{align*}
where
\begin{align*}
B_1 = \| \beta^N - \beta \|_{\bV^*}, \text{ }B_2 = \| (^d{P} - I)\beta \|_{\bV^*}.
\end{align*}
Again by applying the same approach in the above discussion for $f$ function, we can prove $B_1 \le \eps(1+k)(1+\|x(0)\|_{\bH})$, so we only need to work on $B_2$. 
It is sufficient to prove the following uniform convergence:
\begin{equation}\label{uni_goal}
\esssup_{v\in U,x_t\in\Lambda_{0,t}^{0,k;x(0)}(\bV)} \left\| (^{d_j}{P} - I)\beta(t,x_t,v) \right\|_{\bV^*} \to 0 \text{ }\text{ a.e. } (\omega,t)\in \Omega\times [0,T], \text{ as }d_j\to\infty,
\end{equation}
where $\{d_j\}_{j\in\bN} \subseteq \bN^+$ is an infinite subsequence with $d_1<d_2<\cdots$.

Since, by $(\cA 1)$, $\beta$ is uniformly bounded in $\bH$, then for any $t\in [0,T]$, $x_t\in\Lambda^0_t(\bH)$, and $v\in U$, $\beta(t,x_t,v)$ takes value in $B_L^{\bH}(0)$, where $B_L^{\bH}(0)\subset \bH\subset \bV^*$, $B_L^{\bH}(0)$ is bounded in $\bH$ and compactly embedded in $\bV^*$. It is sufficient to prove the following uniform convergence holds
\begin{equation}\label{uni_h}
\esssup_{h\in B_L^{\bH}(0)} \left\| (^{d_j}{P} - I)h \right\|_{\bV^*} \to 0,
\end{equation}
where we consider the operator
\begin{equation*}
{^d{P}} - I: \bV^*\to \bV^*.
\end{equation*}
Define
\begin{equation*}
g_d(h) = \left\| ( {^d{P}} - I )h \right\|_{\bV^*},
\end{equation*}
and let space ${^d{P}\bB}$ be the set of $d$-dimensional projections $(^d{P})y$ of all $y\in\bB$, where $\bB$ is any Banach space. And for any space $\bB$, we define $\bB^*$ as its dual space. Thus for any linear map $\Psi: {^d{P}}\bV\to ({^d{P}}\bV)^*$, by Hahn-Banach theorem, there exists $\overline{\Psi}:\bV\to \bV^*$ such that for any $h\in \bV$
\begin{align*}
\Psi((^d{P})h) = & {_{\bV^*}\langle} \overline{\Psi}, (^d{P}h) \rangle_{\bV},
\\
=& {_{\bV^*}\langle} \overline{\Psi}, (^d{P})(^d{P}h) \rangle_{\bV},
\\
=&  {_{\bV^*}\langle} (^d{P})\overline{\Psi}, (^d{P}h) \rangle_{\bV}.
\end{align*}
Thus ${^d{P}}\bV^* = (^d{P}\bV)^*$. Similarly since $\bV$ is reflexive, we could obtain $({^d{P}}\bV^*)^* = {^d{P}}\bV^{**} = {^d{P}}\bV$. So $((^d{P}\bV)^*)^* = {^d{P}}\bV$ and $ ( (^d{P}\bV^*)^* )^* = {^d{P}}\bV^* $, i.e. for any $d\in\bN^+$, $\left( ^d{P}\bV, \|\cdot\|_{\bV} \right)$ and $\left( ^d{P}\bV^*, \|\cdot\|_{\bV^*} \right)$ are both reflexive spaces.

For any $d\in\bN^+$, $h\in \bV$, we have
\begin{align*}
\left\| (^d{P})h \right\|_{\bV} = &\sup_{b\in {^d{P}}\bV^*} \frac{ {_{\bV^*}\langle} b, {^d{P}}h \rangle_{\bV} }{ \|b\|_{\bV^*} }
\\
= &\sup_{b\in {^d{P}}\bV^*} \frac{ {_{\bV^*}\langle} b, h \rangle_{\bV} }{ \|b\|_{\bV^*} }
\\
\le &\sup_{b\in \bV^*} \frac{ {_{\bV^*}\langle} b, h \rangle_{\bV} }{ \|b\|_{\bV^*} }
\\
= &\|h\|_{\bV}.
\end{align*}
Then for any $d\in\bN^+$, $h\in \bV$, $h^*\in \bV^*$, 
\begin{align*}
{_{\bV^*}\langle} (^d{P})h^*, h \rangle_{\bV} = &{_{\bV^*}\langle} h^*, (^d{P})h \rangle_{\bV},
\\
\le &\|h^*\|_{\bV^*} \cdot \|(^d{P})h\|_{\bV},
\\
\le &\|h^*\|_{\bV^*} \cdot \|h\|_{\bV}.
\end{align*}
Then we must have
\begin{equation*}
\left\| (^d{P})h^* \right\|_{\bV^*} = \sup_{h\in \bV} \frac{ {_{\bV^*}\langle} (^d{P})h^*, h \rangle_{\bV} }{\|h\|_{\bV}} \le \|h^*\|_{\bV^*}.
\end{equation*}
This grants us equicontinuity of the function $g(\cdot)$ by
\begin{equation*}
g_d(h) = \left\| ( {^d{P}} - I )h \right\|_{\bV^*} \le 2\|h\|_{\bV^*},
\end{equation*}
where $h\in \bV^*$. Recall $B_L^{\bH}(0)$ is compactly embedded in $\bV^*$. Combined with the obvious pointwise convergence, $\{ g_d(\cdot) \}_{d\in\bN^+}$, contains a subsequence $\{ g_{d_j}(\cdot) \}$, for $j=1,2,3,\cdots$, that converges to 0 uniformly, i.e. if we pick $d = d_1, d_2, d_3, \cdots$, \eqref{uni_h} and thus \eqref{uni_goal} holds.

  \end{proof}
\begin{rmk}
The approximation lemma above differs from that of the finite dimensional case \cite[Lemma 4.2]{qiu2022controlled} in two ways. On the one hand, the initial state $x(0)$ needs to be fixed in order to make sense of the uniform convergence introduced by \eqref{state appro}. This is caused by the unboundedness of $A: \bV\to\bV$ or $\bV\to\bH$. This directly leads to the initial state dependence in not only Lemma \ref{lem-approx} but Proposition \ref{prop-cor-cmp}. On the other hand, by applying the finite dimensional approximation method, an additional layer of uniform convergence of \eqref{uni_goal} needs to be satisfied due to the infinite dimensional setup.

It is worthwhile to mention that the $x(0)$-dependence in Proposition \ref{prop-cor-cmp} and Lemma \ref{lem-approx} can be avoided if we alternatively introduce a stronger uniform boundedness assumption to the coefficients. For instance, assume further there exists $L>0$ such that for all $x_T\in \Lambda_T(\bV^*)$, $t\in [0,T]$ and $\tilde{x}_t\in\Lambda_t(\bV^*)$, there holds
\begin{equation*}
\esssup_{\omega\in\Omega} | G(x_T) |+\esssup_{\omega\in\Omega} \max_{v\in U} | f(t, \tilde{x}_t, v) |+\esssup_{\omega\in\Omega} \max_{v\in U} \| \beta(t, \tilde{x}_t, v)  \|_{\bH} \le \frac{L}{1+ \|x(0)\|_{\bH}\lor \|\tilde{x}(0)\|_\bH}.
\end{equation*}
In this case, the approximation of the coefficients introduced by \eqref{state appro} also works in a uniform sense.
\end{rmk}


\begin{thm}\label{thm-main}   
 Let Assumptions $(\cA 1)$ and $(\cA 2)$ hold. The viscosity solution to SPHJ equation \eqref{SPHJ} is unique.
 \end{thm}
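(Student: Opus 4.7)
Let $u$ be a viscosity subsolution and $\bar u$ a viscosity supersolution of \eqref{SPHJ}, and let $\{\underline k_i\}$ and $\{\overline k_i\}$ be the integer sequences associated with them by Proposition~\ref{prop-cor-cmp}. Fix an arbitrary initial point $x(0)\in \bV^A$ (invoking the denseness hypothesis already used in Theorem~\ref{exist}); the general case will follow by density of $\bV^A$ in $\bV$ (and of $\tilde\Lambda^0_{0,t}(\bV)$ in $\Lambda^0_t(\bH)$) combined with the $\bH$-Lipschitz continuity of $u,\bar u$ inherited from Proposition~\ref{prop3.2}(v). Fix $k=\underline k_i\vee \overline k_i$ and, for each $\eps>0$, invoke Lemma~\ref{lem-approx} to produce smooth approximants $(G^N,f^N,\beta^N)$ depending only on finitely many Wiener values $W(t_1),\dots,W(t_N)$ and path evaluations $x(t_0),\dots,x(t_N)$, whose $L^2$-distance from $(G,f,\beta)$ along paths in $\Lambda_{0,t}^{0,k;x(0)}(\bV)\times U$ is of order $\eps(1+k)(1+\|x(0)\|_{\bH})$.

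\textbf{Construction of the approximate test functional.} Consider the auxiliary control problem with coefficients $(G^N,f^N,(^dP)\beta^N)$ and denote its value functional by $V^{\eps,N,d}$. Because these coefficients are piecewise constant in $t$ relative to the grid $\{t_j\}$ and are smooth, Markovian functions of the finite-dimensional variables $(W(t_1),\dots,W(t_N),x(t_0),\dots,x(t_N))$, one can build $V^{\eps,N,d}$ by a finite backward recursion over the slabs $[t_j,t_{j+1}]$, each step amounting to a classical deterministic HJB equation for a controlled evolution equation on the Gelfand triple $\bV\subset \bH\subset \bV^*$ with the random parameters $\{W(t_r)\}$ and $\{x(t_r)\}_{r\le j}$ frozen. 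Standard regularity results for such HJB equations, together with the stability estimates of Lemma~\ref{lemma3.1}, yield that $V^{\eps,N,d}\in \mathcal{C}^1_{\sF}$ with associated partition $\{t_j\}$, possesses a bounded $\bV$-valued vertical gradient, and satisfies the smooth SPHJ equation with coefficients $(f^N,(^dP)\beta^N,G^N)$ in the classical sense on paths in $\Lambda_{0,t}^{0,k;x(0)}(\bV)$.

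\textbf{Sandwiching via Proposition \ref{prop-cor-cmp} and passage to the limit.} The error bounds of Lemma~\ref{lem-approx} imply that the defect of $V^{\eps,N,d}$ in the true SPHJ equation is, in $L^2$-norm over $\Lambda_{0,t}^{0,k;x(0)}(\bV)\times[0,T]$, bounded by $C\eps(1+k)(1+\|x(0)\|_{\bH})$. Adding to $V^{\eps,N,d}$ a linear correction of the form $C\eps(T-s)$ turns it into a bona fide classical supersolution (and, upon subtraction, a classical subsolution) of the true equation, with the correct terminal-condition inequality. Plugging these two test functions into Proposition~\ref{prop-cor-cmp} then gives, for all $t\in[0,T]$ and all $x\in\Lambda_{0,t}^{0,k;x(0)}(\bV)$,
\[
u(t,x)\le V^{\eps,N,d}(t,x)+C\eps,\qquad \bar u(t,x)\ge V^{\eps,N,d}(t,x)-C\eps.
\]
Subtracting and letting $\eps\to 0$ (with $N,d\to\infty$ chosen accordingly) yields $u\le \bar u$ on each $\Lambda_{0,t}^{0,k;x(0)}(\bV)$. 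Ranging over $k\in\bN^+$ and $x(0)\in \bV^A$ and exploiting the density argument mentioned above extends this inequality to $\Lambda^0_t(\bH)$ a.s. Swapping the roles of $u$ and $\bar u$ produces the reverse inequality, giving uniqueness.

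\textbf{Main obstacle.} The decisive technical point is verifying that the finite-dimensional approximation $V^{\eps,N,d}$ indeed lies in $\mathcal{C}^1_{\sF}$ with the \emph{uniform-in-}$\eps$ Hölder regularity in $\|\cdot\|_{0,\bV^*}$ and the bounded $\bV$-valued gradient that Proposition~\ref{prop-cor-cmp} requires, even though the underlying state equation still involves the unbounded operator $A$. This forces a careful propagation of the coercivity and boundedness estimates for $A$ together with the Lipschitz constants of $(G^N,f^N,(^dP)\beta^N)$ through the backward slab-by-slab recursion; it is also here that the restriction $x(0)\in\bV^A$ is genuinely used, in order to control the $\bV$-norm of the initial segment when absorbing the $Ax$ drift into the error estimates. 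Matching the $k$-dependent error of Lemma~\ref{lem-approx} with the initial-state--indexed sequence of integers from Proposition~\ref{prop-cor-cmp} is a secondary bookkeeping step, handled by monotonicity in $k$.
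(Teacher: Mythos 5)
Your overall strategy coincides with the paper's: use Lemma \ref{lem-approx} to replace $(G,f,\beta)$ by finite\-/dimensional, finitely\-/based approximants, build the value functional of the approximating control problem as a classical super/subsolution, feed it into Proposition \ref{prop-cor-cmp}, and pass to the limit in $\eps$, $N$, $d$. However, two of the steps you treat as routine are precisely where the paper has to work, and as written they fail.

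First, the regularity claim for $V^{\eps,N,d}$ is a genuine gap. The approximating state dynamics is still a first\-/order (noise\-/free in the state variable) controlled equation, so conditionally on the frozen Wiener values each slab problem is a \emph{deterministic} HJB equation; its value function is in general only Lipschitz/semiconcave, not $C^1$, and ``standard regularity results'' do not upgrade this. Membership in $\mathcal{C}^1_{\sF}$ requires a genuine $\bV$-valued vertical gradient with H\"older continuity, and Proposition \ref{prop-cor-cmp} cannot accept a merely Lipschitz candidate. The paper resolves this by enlarging the probability space with an independent Brownian motion $B$ and adding a nondegenerate perturbation $\delta\sum_{i=1}^d e_i\,dB^i(t)$ to the finite\-/dimensional state equation, so that the resulting HJB equation on $\bR^m\times\hat\bH^d$ is uniformly parabolic and Krylov's $C^{1+\bar\alpha/2,2+\bar\alpha}$ regularity applies; the It\^o--Kunita formula then shows $V^{\eps}(\cdot,\cdot-\delta B)\in\mathcal{C}^1_{\sF}$, and $\delta\to0$ is taken only at the very end. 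Your proposal omits this regularization entirely, and also keeps the slab problems on the Gelfand triple rather than on the projected space $\hat\bH^d$, which is what makes the finite\-/dimensional PDE regularity theory applicable in the first place. Second, the correction ``$C\eps(T-s)$'' cannot work: the defects $f^\eps_k$, $\beta^\eps_k$, $G^\eps_k$ of Lemma \ref{lem-approx} are random and controlled only in $L^2(\Omega\times[0,T])$, not uniformly in $\omega$, so no deterministic additive term dominates them almost surely. The paper instead absorbs the defect by the adapted solution $Y^\eps_k$ of the backward SDE with driver $f^\eps_k+C_1\beta^\eps_k$ and terminal value $G^\eps_k$ (plus a second BSDE term $\delta C_2\,y$ compensating the $\delta B$ shift); this is essential, not bookkeeping. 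A minor further point: the restriction $x(0)\in\bV^A$ plays no role in the uniqueness argument (it is only used for the subsolution part of the existence theorem), whereas the $x(0)$-dependence that does matter is the one built into Proposition \ref{prop-cor-cmp} and the error bound $\eps(1+k)(1+\|x(0)\|_{\bH})$ of Lemma \ref{lem-approx}.
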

\begin{proof}

For each $k \in \mathbb{N}^+$ and $\xi\in \bV$, we define
\begin{align*}
\overline{\mathcal{V}}_k(\xi) = \{   \phi\in\mathcal{C}_{\sF}^1 : \phi(T, x) 
&\ge 
      G(x) \text{  }\forall x\in\Lambda^0_T(\bH), \text{a.s.}, \text{ and for each } t\in[0, T) \text{ with } y\in\Lambda_{0,t}^{0, k ;\xi}(\bV), 
\\
&\text{ess}
      \liminf_{(s, x)\to(t^+, y)}  [  - \mathfrak{d}_s\phi(s, x) - \mathcal{H}(s, x, \nabla\phi(s, x))  ] \ge 0, \text{ a.s.}  \},
\end{align*}


\begin{align*}
\underline{\mathcal{V}}_k(\xi) = \{   \phi\in\mathcal{C}_{\sF}^1 : \phi(T, x) &\le G(x) \text{  }\forall x\in\Lambda^0_T(\bH), \text{a.s.}, \text{ and for each } t\in[0, T) \text{ with } y\in\Lambda_{0,t}^{0, k ;\xi}(\bV),\\ 
&\text{ess}  \limsup_{(s, x)\to(t^+, y)}  [  - \mathfrak{d}_s\phi(s, x) - \mathcal{H}(s, x, \nabla\phi(s, x))  ] \le 0, \text{ a.s.} \}.
\end{align*}

Set
\begin{equation*}
\overline{u}_k(t,y) = \essinf_{\phi_k \in \overline{\mathcal{V}}_k(\xi), \xi\in\bV} \phi_k(t,y),\text{ } \underline{u}_k(t,y) = \esssup_{\phi_k \in \underline{\mathcal{V}}_k(\xi), \xi\in\bV} \phi_k(t,y),
\end{equation*}
and
\begin{equation*}
\overline{u} = \lim_{k\to +\infty} \overline{u}_k, \text{ }\underline{u} = \lim_{k\to +\infty} \underline{u}_k.
\end{equation*}
By Proposition \ref{prop-cor-cmp} and Theorem \ref{exist}, the viscosity solution $V$ satisfies $\underline{u}\le V\le\overline{u}$ on $\cup_{k=1}^{\infty} \cup_{\xi\in \bV}\Lambda_{0,T}^{0,k;\xi}(\bV)$, which is dense in $\Lambda_T^0(\bH)$. For the purpose of proving the uniqueness of the viscosity solution, we only need to verify $\underline{u} = V = \overline{u}$ on $\cup_{k=1}^{\infty} \cup_{\xi\in \bV}\Lambda_{0,T}^{0,k;\xi}(\bV)$.


\textbf{Step 1.} Like what we do with viscosity subsolutions and supersolutions, we construct functions from $\overline{\mathcal{V}}_k(\xi)$ and $\underline{\mathcal{V}}_k(\xi)$ to dominate the value function $V$ from above and below, respectively. Let $(\Omega', \sF', \{  \sF'_t  \}_{t\ge 0}, \mathbb{P}')$ be another completely filtered probability space on which an $d$ dimensional standard Brownian motion $B = \{B(t) : t \ge 0\}$ is well defined. The filtration $\{  \sF'_t  \}_{t\ge 0}$ is generated by $B$ and augmented by all $\mathbb{P}'$-null sets in $\sF'$.

For simplicity we put
\begin{equation*}
(\tilde{\Omega}, \tilde{\sF}, \{  \tilde{\sF}_t  \}_{t\ge 0}, \tilde{\mathbb{P}}) = (\Omega\times\Omega', \sF\times\sF', \{  \sF_t\times\sF'_t  \}_{t\ge 0}, \mathbb{P}\otimes\mathbb{P}'),
\end{equation*}
and denote by $\overline{\mathcal{\cU}}$ the set of all $U$-valued and $\tilde{\sF}_t$-adapted processes. Then we have two independent Brownian motions $B(t)$ and $W(t)$ and all the previous results still hold on the enlarged probability space.

For each $\xi\in \bV$ and any $\epsilon\in(0, 1)$ and $k\in\mathbb{N}^+$, choose $(G^{\epsilon}_k, f^{\epsilon}_k, \beta^{\epsilon}_k)$ and $(G^N, f^N, ({^d{P}}){\beta}^N)$ as defined in Lemma \ref{lem-approx}. By the theory of BSDEs (\cite{briand2003lp}), we have the pairs $(Y^{\epsilon}_k, Z^{\epsilon}_k)$ and $(y, z)$ be the unique adapted solutions to the BSDEs
\begin{equation*}
Y^{\epsilon}_k(s) = G^{\epsilon}_k + \int_s^T \! (  f^{\epsilon}_k(t) + C_1 \beta^{\epsilon}_k(t)  )dt - \int_s^T \! Z^{\epsilon}_k(t) dW(t),
\end{equation*}
and 
\begin{equation*}
y(s) = \|  B_T  \|_{0,\bH} + \int_s^T \! \|  B_r  \|_{0,\bH} dr - \int_s^T \! z(r) dB(r),
\end{equation*}
respectively, with $C_1\ge 0$, which will be determined later. 

For each $s\in[0, T)$ and $x_s\in \Lambda_{0,s}^{0,k}(\bV)$, define $x_s^{d} := (^d{P})(x_s)\in (^d{P})\left(\Lambda_{0,s}^{0,k}(\bV)\right)$ as the $d$-dimensional projection of the corresponding path $x_s$. Let
\begin{align*}
V^{\epsilon}(s,x_s^d) = \mbox{ess}\inf_{\theta\in\overline{\mathcal{U}}} E_{\sF_s} \Bigg[  \int_s^T \! &f^N\bigg(  W((t_1 \land t)),\cdots, W(t_N \land t), t, X^{s,x_s^d;\theta,N}(0), X^{s,x_s^d;\theta,N}(t_1 \land t), \cdots,
\\ 
&\text{ }\text{ }\text{ }\text{ }\text{ }X^{s,x_s^d;\theta,N}(t_N \land t), \theta(t)  \bigg)dt
\\
+G^N&\left(  W(t_1), \cdots, W(t_N), X^{s,x_s^d;\theta,N}(0), X^{s,x_s^d;\theta,N}(t_1),\cdots, X^{s,x_s^d;\theta,N}(t_N)  \right)  \Bigg],
\end{align*}
where $X^{s,x_s^d;\theta,N}(t) \in C([0,T];\hat{\bH}^d)$ satisfies the following finite dimensional stochastic differential equation 
\begin{equation*}
\left\{
\begin{split}
dX(t) = &{^d{P}}(AX(t)) dt
\\
&{+^d{P}}\beta^N\left(  W(t_1 \land t),\cdots, W(t_N \land t), t, X(0), X(t_1 \land t), \cdots, X(t_N \land t), \theta(t)  \right)dt
\\
&+ \delta \sum_{i=1}^d e_idB^i(t), \text{ } t\in[s, T]; \label{fin_x}
\\
X(t) = &x_s^d(t), \text{ }t\in[0, s]
\end{split}
\right.
\end{equation*}
with $\delta\in(0, 1)$ as an arbitrary constant. 
For each $s\in [t_{N-1}, T]$, $x_s\in \Lambda^{0,k}_{0,s}(\bV)$, let
\begin{align*}
V^{\epsilon}(s, x_s^d) &= \tilde{V}^{\epsilon}(  W(t_1), \cdots, W(t_{N-1}), W(s), s, x^d(0), \cdots, x^d(t_{N-1}), x^d(s)  )
\end{align*}
with
\begin{align*}
      &\tilde{V}^{\epsilon}(  W(t_1), \cdots, W(t_{N-1}), \tilde{y}, s, x^d(0), \cdots, x^d(t_{N-1}), \tilde{x}  )
\\
      =& \essinf_{\theta\in\mathcal{U}} E_{\sF_s, W(s) = \tilde{y}, x_s^d(s) = \tilde{x}} \Bigg[  \int_s^T \! f^N(  W((t_1)),\cdots, W(t_{N-1}), W(t), t, \cdots, x^d(t_{N-1}),
\\ 
&\text{ }\text{ }\text{ }\text{ }  X^{s,x_s^d;\theta,N}(t), \theta(t)  )dt + G^N(  W(t_1), \cdots, W(t_N), x^d(0),\cdots, X^{s,x_s^d;\theta,N}(T)  ) \Bigg].
\end{align*}
For simplicity, we will write $x^d(t_j) = x_s^d(t_j)$ for $j = 0, \cdots, N-1$ because for $s\in(t_{N-1}, T]$, they are fixed with the terminal condition. By the viscosity solution theory of fully nonlinear parabolic PDEs(\cite{lions1983optimal}), the function $\tilde{V}^{\epsilon}(  W(t_1), \cdots, W(t_{N-1}), \tilde{y}, s, x^d(0), \cdots, x^d(t_{N-1}), \tilde{x}  )$ satisfies the following HJB equation:
\begin{align*}
- D_t &u(\tilde{y}, t, \tilde{x}) = \frac{1}{2}tr(  D_{\tilde{y}\tilde{y}} u(\tilde{y}, t, \tilde{x}) ) + \frac{\delta^2}{2} tr(  D_{\tilde{x}\tilde{x}} u(\tilde{y}, t, \tilde{x})  )  
\\
&\text{ }\text{ }\text{ }\text{ }\text{ }\text{ }\text{ }\text{ }\text{ }\text{ }\text{ }\text{ }\text{ }\text{ }\text{ } + \mbox{ess}\inf_{v\in U} \{  [  (^d{P}Ax^d(t))'
\\
&\text{ }\text{ }\text{ }\text{ }\text{ }\text{ }\text{ }\text{ }\text{ }\text{ }\text{ }\text{ }\text{ }\text{ }\text{ } + ((^d{P})\beta^N)'(  W(t_1), \cdots, W(t_{N-1}), \tilde{y}, t, x^d(0), \cdots, x^d(t_{N-1}), \tilde{x}, v  ) ] D_{\tilde{x}} u(\tilde{y}, t, \tilde{x})  
\\
&\text{ }\text{ }\text{ }\text{ }\text{ }\text{ }\text{ }\text{ }\text{ }\text{ }\text{ }\text{ }\text{ }\text{ }\text{ } + f^N (  W(t_1), \cdots, W(t_{N-1}), \tilde{y}, t, x^d(0), \cdots, x^d(t_{N-1}), \tilde{x}, v  ) \};
\\
&u(\tilde{y}, T, \tilde{x}) = G^N (  W(t_1), \cdots, W(t_{N-1}), \tilde{y}, t, x^d(0), \cdots, x^d(t_{N-1}), \tilde{x}  ).
\end{align*}
Thus the regularity of viscosity solutions (\cite[Theorem 6.4.3]{krylov1987nonlinear}) gives for each $x(t_i) \in \Lambda^{0,k}_{0,T}(\bV)$, $i = 0, \cdots, N-1$,
\begin{align*}
\tilde{V}^{\epsilon}&(  W(t_1), \cdots, W(t_{N-1}), \cdot , \cdot, x^d(0), \cdots, x^d(t_{N-1}), \cdot  )
\\
      &\in\cap_{\overline{t}\in[t_{N-1}, T)} L^{\infty}(  \Omega, \mathcal{F}_{t_{N-1}}; C^{1+\frac{\overline{\alpha}}{2}, 2+\overline{\alpha}}([  t_{N-1}, \overline{t}  ]\times\mathbb{R}^{m}\times \hat{\bH}^d)  ),
\end{align*}
for some $\overline{\alpha}\in(0,1)$, where the $time-space$ H\"older space $C^{1+\frac{\overline{\alpha}}{2}, 2+\overline{\alpha}}([  t_{N-1}, \overline{t}  ]\times\mathbb{R}^{m}\times \hat{\bH}^d)$ is defined as usual. The above arguments can be extended similarly on time interval $[t_{N-2}, t_{N-1}]$ with the previously computed $V^{\epsilon}(t_{N-1}, x^d)$ as the terminal value, and it goes recursively on $[ t_{N-3}, t_{N-2} ], \cdots, [0, t_1]$.

On $t\in [t_{N-1}, T]$, by applying the It\^o-Kunita formula (\cite[Page 118-119]{kunita1981some}) to $V^{\epsilon}$, for any $x_t\in \Lambda^{0,k}_{0,t}(\bV)$ and $x_T\in \Lambda^{0,k}_{0,T}(\bV)$, we can have the following differential equation
\begin{align*}
&- dV^{\epsilon}\left(t, (x^d - \delta\sum_i^d e_iB^i)_t\right)
\\
=& \mbox{ess}\inf_{v\in U} \Bigg\{  {^d{P}} ( Ax^d(t) )' \nabla V^{\epsilon}\left( t, (x^d - \delta\sum_{i=1}^d e_iB^i)_t\right)
\\
&\text{ }\text{ }\text{ }\text{ }\text{ }\text{ }\text{ }\text{ }\text{ }\text{ }\text{ }\text{ } + \left( (^d{P}) \beta^N\right)'\Bigg(  W(t_1), \cdots, W(t), t, x^d(0), \cdots, x^d(t_{N-1}) - \delta\sum_{i=1}^d e_iB^i(t_{N-1}), 
\\
&\text{ }\text{ }\text{ }\text{ }\text{ }\text{ }\text{ }\text{ }\text{ }\text{ }\text{ }\text{ }\text{ }\text{ }\text{ }\text{ }\text{ }\text{ }\text{ }\text{ }\text{ }\text{ }\text{ }\text{ }\text{ }\text{ }\text{ }\text{ }\text{ }\text{ }\text{ }\text{ }\text{ }x^d(t) -\delta\sum_{i=1}^d e_iB^i(t), v  \Bigg) \cdot \nabla V^{\epsilon}\left( t, (x^d - \delta\sum_{i=1}^d e_iB^i)_t \right)
\\ 
&\text{ }\text{ }\text{ }\text{ }\text{ }\text{ }\text{ }\text{ }\text{ }\text{ }\text{ }\text{ } + f^N \Bigg(  W(t_1), \cdots, W(t), t, x^d(0), \cdots,  
\\
&\text{ }\text{ }\text{ }\text{ }\text{ }\text{ }\text{ }\text{ }\text{ }\text{ }\text{ }\text{ }\text{ }\text{ }\text{ }\text{ }\text{ }\text{ }\text{ }\text{ }\text{ }\text{ }x^d(t_{N-1}) - \delta\sum_{i=1}^d e_iB^i(t_{N-1}),x^d(t) - \delta\sum_{i=1}^d e_iB^i(t), v  \Bigg) \Bigg\}dt
\\
& - D_{\tilde{y}} \tilde{V}^{\epsilon} \Bigg(  W(t_1),\cdots, W(t), t, x^d(0), \cdots,  
\\
&\text{ }\text{ }\text{ }\text{ }\text{ }\text{ }\text{ }\text{ }\text{ }\text{ }\text{ }\text{ }\text{ }\text{ }x^d(t_{N-1}) - \delta\sum_{i=1}^{d} e_iB^i(t_{N-1}),x^d(t) - \delta\sum_{i=1}^d e_iB^i(t)  \Bigg) dW(t)
\\
& + \delta  \nabla V^{\epsilon}\left(t, (x^d -\delta\sum_{i=1}^d e_iB^i)_t\right)d B(t),
\\
&V^{\epsilon}(T, x^d_T) = G^N\Bigg(  W(t_1), \cdots, W(T), x^d(0), \cdots, 
\\
&\text{ }\text{ }\text{ }\text{ }\text{ }\text{ }\text{ }\text{ }\text{ }\text{ }\text{ }\text{ }\text{ }\text{ }\text{ }\text{ }\text{ }\text{ }\text{ }\text{ }\text{ }\text{ }\text{ }\text{ }x^d(t_{N-1}) - \delta\sum_{i=1}^d e_iB^i(t_{N-1}), x^d(T) - \delta\sum_{i=1}^d  e_iB^i(T)  \Bigg).
\end{align*}
Again it follows similarly on other intervals $[0, t_1), \cdots, [t_{N-2}, t_{N-1})$. 

Then by Lemma \ref{lem-approx}, Proposition \ref{prop3.2} (iv), (v) and the definition of the gradient, there exists $\tilde{L}>0$ such that for any $t\in[0, T]$ with $x_t\in\Lambda^{0,k}_{0,t}(\bV)$,
\begin{equation*}
\|  \nabla V^{\epsilon}(t, x_t^d)  \|_{\bV} \le \tilde{L}, \text{ a.s.}
\end{equation*}
 
Here $\tilde{L}$ is not dependent of $\epsilon$, $d$, and $N$. Hence by Definition \ref{cf1} we have $V^{\epsilon}(\cdot, \cdot - \delta B(\cdot))\in \mathcal{C}^1_{\sF}$.

Let $C_1 = \tilde{L}$ and $C_2 = 4L_c ( \tilde{L} + 1 )$, where $L_c$ is the Lipschitz constant defined in Lemma 4.2. And let
\begin{equation*}
\overline{V}^{\epsilon}_k (s, x) = \overline{V}^{\epsilon}_k (s, x^d) = V^{\epsilon} \left(s, (x^d - \delta\sum_{i=1}^d e_iB^i)_s\right) + Y^{\epsilon}_k (s) + \delta C_2 y(s),
\end{equation*}
\begin{equation*}
\underline{V}^{\epsilon}_k (s, x) = \underline{V}^{\epsilon}_k (s, x^d) = V^{\epsilon} \left(s, (x^d - \delta\sum_{i=1}^d e_iB^i)_s\right) - Y^{\epsilon}_k (s) - \delta C_2 y(s).
\end{equation*}
Clearly for any $s\in [0,T]$, $x\in\Lambda^{0,k}_{0,s}(\bV)$, and any $k\in\bN^+$, both $\overline{V}^{\epsilon}_k (s, x^d)$ and $\underline{V}^{\epsilon}_k (s, x^d)$ take value in $\bR$. The uniform Lipschitz continuity assumption in $(\cA 1)$ will grant us
\begin{align*}
&\left\|  ^d{P}\beta^N(t, x_t^d, v) - {^d{P}}\beta^N\left(t, (x^d - \delta\sum_{i=1}^d e_iB^i)_t, v\right)  \right\|_{\bV^*} \le \delta L_c\| B_t \|_{0,\bH},
\\
&\left|  f^N(t, x_t^d, v) - f^N\left(t, (x^d - \delta\sum_{i=1}^d e_iB^i)_t, v\right)  \right| \le \delta L_c\| B_t \|_{0,\bH},
\end{align*}
for some constant $L_c>0$.


It holds that for all $(t, x_t)$ with $t\in(t_{N-1}, T)$ and $x_t\in \Lambda_{0,t}^{0,k;\xi}(\bV)$ with the corresponding $x_t^d$ satisfying the finite dimensional SDE,
\begin{align*}
&- \mathfrak{d}_t \overline{V}^{\epsilon}_k - \mathcal{H}(\nabla\overline{V}^{\epsilon}_k)
\\
= &-\mathfrak{d}_t \overline{V}^{\epsilon}_k - \essinf_{v\in U} \left\{ (Ax)'\nabla\overline{V}_k^{\epsilon} + \beta'\nabla\overline{V}_k^{\epsilon} + f \right\}
\\
= &- \mathfrak{d}_t \overline{V}^{\epsilon}_k - \essinf_{v\in U} \{ ( ^d{P}(Ax^d)  )' \nabla\overline{V}_k^{\epsilon} + ((^d{P})\beta^N)' \nabla\overline{V}^{\epsilon}_k + f^N + f_k^{\epsilon} +\tilde{L}\beta^{\epsilon}_k + \delta C_2 \| B_t \|_{0,\bH}
\\
& +  (Ax)'\nabla\overline{V}_k^{\epsilon} - (^d{P}(Ax^d) )' \nabla\overline{V}_k^{\epsilon} + \beta'\nabla\overline{V}_k^{\epsilon} - ((^d{P})\beta^N)' \nabla\overline{V}^{\epsilon}_k - \beta^{\epsilon}_k \tilde{L} + f - f^N 
\\
&- f^{\epsilon}_k - \delta C_2 \|  B_t \|_{0,\bH} \}
\\
\ge &- \mathfrak{d}_t \overline{V}^{\epsilon}_k - \essinf_{v\in U} \{ ( ^d{P}(Ax^d) ))' \nabla\overline{V}_k^{\epsilon} + ((^d{P})\beta^N)' \nabla\overline{V}^{\epsilon}_k + f^N + f_k^{\epsilon} +\tilde{L}\beta^{\epsilon}_k + \delta C_2 \| B_t \|_{0,\bH}  \}
\\
= &- \mathfrak{d}_t \left[ V^{\epsilon}\left(t, (x^d -\delta\sum_{i=1}^d e_iB^i)_t\right) + Y_k^{\epsilon}(t) + \delta C_2y(t) \right]
\\
& - \essinf_{v\in U} \{ ( ^d{P}(Ax^d)  )' \nabla\overline{V}_k^{\epsilon} + ((^d{P})\beta^N)' \nabla\overline{V}^{\epsilon}_k + f^N + f_k^{\epsilon} +\tilde{L}\beta^{\epsilon}_k + \delta C_2 \| B_t \|_{0,\bH}  \} 
\\
= &- \mathfrak{d}_t V^{\epsilon}\left(t, (x^d - \delta\sum_{i=1}^d e_iB^i)_t\right) - \mathfrak{d}_t \left[ G_k^{\epsilon} + \int_t^T \! ( f_k^{\epsilon}(s) + C_1 \beta_k^{\epsilon}(s) ) ds - \int_t^T \! Z_k^{\epsilon}(s)dW(s) \right] 
\\
&- \delta C_2 y'(t) - \essinf_{v\in U} \left\{ ( ^d{P}(Ax^d)  )' \nabla\overline{V}_k^{\epsilon} + ((^d{P})\beta^N)' \nabla\overline{V}^{\epsilon}_k + f^N + f_k^{\epsilon} +\tilde{L}\beta^{\epsilon}_k + \delta C_2 \| B_t \|_{0,\bH}  \right\} 
\\
\ge &-\mathfrak{d}_t V^{\epsilon}\left(t, (x^d-\delta\sum_{i=1}^d e_iB^i)_t\right) - \essinf_{v\in U} \left\{ \left( ^d{P}(Ax^d) \right)' \nabla \overline{V}_k^{\epsilon} + \left((^d{P})\beta^N\right)' \nabla \overline{V}_k^{\epsilon} + f^N  \right\}
\\
\ge &-\mathfrak{d}_t V^{\epsilon}\left(t, (x^d-\delta\sum_{i=1}^d e_iB^i)_t\right) - \essinf_{v\in U} \left\{ \left( ^d{P}(Ax^d) \right)' \nabla V_k^{\epsilon} + \left((^d{P})\beta^N\right)' \nabla V_k^{\epsilon} + f^N  \right\}
\\
 = &-\mathfrak{d}_t V^{\epsilon}\left(t, (x^d-\delta\sum_{i=1}^d e_iB^i)_t\right) + \mathfrak{d}_t V^{\epsilon}\left(t, (x^d-\delta\sum_{i=1}^d e_iB^i)_t\right)
\\
= &0. 
\end{align*}
Here the first equality comes straight from the definition of the Hamilton function, and the first inequality is the direct result of Lemma \ref{lem-approx} and the above result
\begin{equation*}
\left\|  \nabla V^{\epsilon}(t, x_t^d)  \right\|_{\bV} \le \tilde{L}, \text{ a.s.,}
\end{equation*}
since
\begin{align*}
& \left( Ax - {^d{P}}(Ax^d) \right)'\nabla\overline{V}_k^{\epsilon} + \left(\beta - (^d{P})\beta^N\right)' \nabla\overline{V}^{\epsilon}_k - \beta^{\epsilon}_k \tilde{L} + f - f^N - f^{\epsilon}_k - \delta C_2 \|  B_t \|_{0,\bH}
\\
=& \left( Ax^d \right)'\nabla\overline{V}_k^{\epsilon} - \left( ^d{P}(Ax^d) \right)'\nabla\overline{V}_k^{\epsilon} + \beta' \nabla\overline{V}^{\epsilon}_k - \left(^d{P}\beta^N\right)' \nabla\overline{V}^{\epsilon}_k - \beta^{\epsilon}_k \tilde{L} + f - f^N - f^{\epsilon}_k 
\\
&- \delta C_2 \|  B_t \|_{0,\bH}
\\
=& \left( Ax^d \right)'\nabla\overline{V}_k^{\epsilon} - \left( Ax^d \right)'(^d{P})(\nabla\overline{V}_k^{\epsilon}) + \beta' \nabla\overline{V}^{\epsilon}_k - \left((^d{P})\beta^N \right)'  (\nabla\overline{V}_k^{\epsilon})  - \beta^{\epsilon}_k \tilde{L} + f - f^N - f^{\epsilon}_k
\\
& - \delta C_2 \|  B_t \|_{0,\bH}
\\
=& \left( Ax^d \right)'\nabla\overline{V}_k^{\epsilon} - \left( Ax^d \right)' \nabla\overline{V}_k^{\epsilon} + \beta' \nabla\overline{V}^{\epsilon}_k - \left((^d{P})\beta^N \right)'  (\nabla\overline{V}_k^{\epsilon})  - \beta^{\epsilon}_k \tilde{L} + f - f^N - f^{\epsilon}_k
\\
& - \delta C_2 \|  B_t \|_{0,\bH}
\\
\le&  \left|f - f^N\right| - f^{\epsilon}_k - \delta C_2 \|  B_t \|_{0,\bH}
\\
\le&  - \delta C_2 \|  B_t \|_{0,\bH}
\\
\le& 0
\end{align*}



Likewise, it follows similarly on intervals $(t_{N-2}, t_{N-1}), \cdots, [0, t_1)$ that
\begin{equation*}
- \mathfrak{d}_t \overline{V}^{\epsilon}_k - \mathcal{H}(\nabla\overline{V}^{\epsilon}_k) \ge 0,
\end{equation*}
which together with the terminal condition relation $\overline{V}^{\epsilon}_k(T) = G^{\epsilon}_k + G^N + \delta C_2 \|  B_T  \|_{0,\bH} \ge G$, we have $\overline{V}^{\epsilon}_k \in \overline{\mathcal{V}}_k(\xi)$. Similarly we obtain $\underline{V}^{\epsilon}_k \in \underline{\mathcal{V}}_k(\xi)$.

$\text{ }$

\textbf{Step 2.} Let $B_t^{s, 0}(r) = B(r) - B(r\land s)$ for $0\le s\le t\le T$ for any $r\in [0,t]$. And for any $t\in [0,T]$, define $\delta^N(t) = X^{s,x_s^d;\theta,N}(t) - X^{s,x_s;\theta}(t)$, where $x_s\in\Lambda_{0,s}^{0,k;\xi}(\bV)$. Then by Lemma \ref{lem-approx} and It\^o's formula, we have for $\forall \text{ }  \theta \in \mathcal{U}$,
\begin{align*}
&\left\|  X^{s, x_s^d; \theta, N}(t) - X^{s, x_s; \theta}(t)  \right\|_{\bH}^2
\\
=&\left\| x^d(s) - x(s) \right\|_{\bH}^2 + 2\int_s^t \!_{\bV^*}{\langle} AX^{s,x_s^d;\theta,N}(r) - AX^{s,x_s;\theta}(r), \delta^N(r)  \rangle_{\bV}dr 
\\
&+ 2\int_s^t \! _{\bV^*}{\langle} (^d{P})\beta^N(r,X_r^{s,x_s^d;\theta,N},\theta(r)) -\beta(r,X_r^{s,x_s;\theta},\theta(r)), \delta^N(r)   \rangle_{\bV}  dr
\\
&+ \int_s^t \! \delta^2 dr + 2\int_s^t \! \langle \delta\sum_{i=1}^d e_i dB^i(r),\delta^N(r)  \rangle_{\bH}.
\end{align*}
Let
\begin{align*} 
\tilde{B}_1(t) &= \int_s^t \! \delta^2 dr + 2\int_s^t \! \langle \delta\sum_{i=1}^d e_i dB^i(r),\delta^N(r)  \rangle_{\bH}.
\end{align*}
Then by Lemma 4.2, Assumption $(\cA 2)$, we have
\begin{align*}
&\left\|  X^{s, x_s^d; \theta, N}(t) - X^{s, x_s; \theta}(t)  \right\|_{\bH}^2
\\
\le &\left\| x^d(s) - x(s) \right\|_{\bH}^2 + 2\int_s^t \!_{\bV^*}{\langle} AX^{s,x_s^d;\theta,N}(r) - AX^{s,x_s;\theta}(r), \delta^N(r)  \rangle_{\bV}dr 
\\
& +2\int_s^t \! \|\delta^N(r)\|_{\bV} \cdot \left\| (^d{P})\beta^N( r,X_r^{s,x_s;\theta,N},\theta(r) ) - (^d{P})\beta^N( r,X_r^{s,x_s;\theta},\theta(r) ) \right\|_{\bV^*} dr 
\\
& +2\int_s^t \! \|\delta^N(r)\|_{\bV} \cdot \left\| (^d{P})\beta^N( r,X_r^{s,x_s;\theta},\theta(r) ) - \beta( r,X_r^{s,x_s;\theta},\theta(r) ) \right\|_{\bV^*} dr + \tilde{B}_1(t)
\\
\le &\left\| x^d(s) - x(s) \right\|_{\bH}^2 + c_1\int_s^t \! \|\delta_N(r)\|_{\bH}^2  dr - c_2\int_s^t \! \|\delta_N(r)\|_{\bV}^2  dr 
\\
&+ 2 \int_s^t \! \|\delta^N(r)\|_{\bV} \left( L_c \max_{\tau\in [0,r]}\| \delta^N(\tau) \|_{\bH} + \beta_k^{\eps}(r) \right)dr + \tilde{B}_1(t)
\\
\le &\left\| x^d(s) - x(s) \right\|_{\bH}^2 + c_1\int_s^t \! \|\delta_N(r)\|_{\bH}^2  dr - c_2\int_s^t \! \|\delta_N(r)\|_{\bV}^2  dr
\\
& + (\frac{c_2}{4}+\frac{c_2}{4})\int_s^t \! \| \delta^N(r) \|_{\bV}^2 dr + \frac{4L_c^2}{c_2}\int_s^t \! \max_{\tau\in [0,r]}\| \delta^N(\tau) \|_{\bH}^2 dr + \frac{4}{c_2} \int_s^t \! \left(\beta_k^{\eps}(r)\right)^2dr + \tilde{B}_1(t)
\\
\le &\left\| x^d(s) - x(s) \right\|_{\bH}^2 + (c_1^+ + \frac{4L_c^2}{c_2})\int_s^t \! \max_{\tau\in [0,r]} \|\delta_N(\tau)\|_{\bH}^2  dr - \frac{c_2}{2} \int_s^t \! \| \delta^N(r) \|_{\bV}^2 dr 
\\
&+ \tilde{B}_1(t) + \frac{4}{c_2}\|\beta_k^{\eps}\|_{L^2(\Omega\times [0,T];\bR)}^2
\end{align*}
Thus by Gr\"onwall's inequality, we have

\begin{equation*}
E_{\sF_s} \left[  \sup_{s\le t\le T}\left\|  X^{s, x_s^d; \theta, N}(t) - X^{s, x_s; \theta}(t)  \right\|_{\bH}^2  \right] \le C_5\left(  \delta^2 + \|\beta_k^{\eps}\|_{L^2(\Omega,[0,T];\bR)}^2 + \left\| x^d(s) - x(s) \right\|_{\bH}^2\right),
\end{equation*}
and likewise, here $C_5$ is independent of the choice of $s, x_s, \xi, \delta, N, k, \epsilon, d$, and $\theta$. Combined with $(\cA 2)$, \text{H\"older's inequality}, and Lemma \ref{lem-approx}, we have
\begin{align*}
&\text{ }\text{ }\text{ }\text{ } E\left|  V^{\epsilon}(s, x_s^d) - V(s, x_s)  \right|
\\
&\le E \text{ }\mbox{ess}\sup_{\theta\in\overline{\mathcal{U}}} E_{\sF_s} \Bigg[  \int_s^T \! (  |  f^N (t, X_t^{s, x_s^d; \theta, N}, \theta(t)  ) - f(t, X_t^{s, x_s; \theta}, \theta(t)) | )dt
\\
&\text{ }\text{ }\text{ }\text{ } +|  G^N (X_T^{s, x_s^d; \theta, N}) - G(X_T^{s, x_s; \theta})  |  \Bigg]
\\
&\le C\left((\delta^2 +  \|  \beta^{\epsilon}_k  \|^2_{L^2 (\Omega\times[0, T]; \bR)} +\left\| x^d(s) - x(s) \right\|_{\bH}^2)^{\frac{1}{2}} + \|  f^{\epsilon}_k  \|_{L^2 (\Omega\times[0, T]; \mathbb{R})} + \|  G^{\epsilon}_k  \|_{L^2 (\Omega; \mathbb{R})}  \right)
\\
&\le C_6 \left(\epsilon(1+k)(1+\|\xi\|_{\bH}) + \left\| (^d{P} - I) x(s) \right\|_{\bH} + \delta \right).
\end{align*}
Likewise, the constant $C_6$ does not depend on the choice of $N, \theta, \epsilon, d, k, \delta, \xi$, and $(s, x_s)$.


And since $\bV$ is compactly embedded into the Hilbert space $\bH$, the proof of the uniform convergence
\begin{equation*}
\lim_{d\to\infty} \left\|(^d{P} - I) x(s)\right\|_{\bH} = 0
\end{equation*}
is standard for any $s\in [0,T]$ and $x(s)\in\bV$. Then by definition of $\overline{V}^{\epsilon}_k$ and $\underline{V}^{\epsilon}_k$, for $\forall s\in[0, T]$ and $x_s\in\Lambda^{0, k;\xi}_{0, s}(\bV)$, we have the following convergence
\begin{equation*}
\lim_{\delta\to 0,d\to\infty}E|  \overline{V}^{\epsilon}(s, x_s^d) - V(s, x_s)  | + \lim_{\delta\to 0,d\to\infty}E|  \underline{V}^{\epsilon}(s, x_s^d) - V(s, x_s)  |  = 0.
\end{equation*}
Since the value function $V$ is a viscosity solution, there exist two infinite series of increasing positive integers $\{\overline{k}_n\}_{n\in\bN^+}$ and $\{\underline{k}_n\}_{n\in\bN^+}$ satisfying
\begin{equation*}
\lim_{n\to\infty} \overline{k}_n = \lim_{n\to\infty} \underline{k}_n = +\infty,
\end{equation*}
such that for each $n\in\bN^+$, $t\in [0,T]$ and $x_t\in\Lambda_{0,t}^{0,\overline{k}_n;\xi}(\bV)$ it holds
\begin{equation*}
V(t,x_t) \le \overline{V}^{\epsilon}_{\overline{k}_n}(t, x_t^d), \text{ a.s.}, 
\end{equation*}
and respectively for all $n\in\bN^+$, $t\in [0,T]$ and $x_t\in\Lambda_{0,t}^{0,\underline{k}_n;\xi}(\bV)$ it holds
\begin{equation*}
\underline{V}^{\epsilon}_{\underline{k}_n}(t, x_t^d) \le V(t,x_t), \text{ a.s.} 
\end{equation*}
Note the above equalities hold by the arbitrariness of $\epsilon, \delta, \xi$, and $\overline{k}_n$ (respectively $\underline{k}_n$). Further combined with the denseness of $\cup_{n=1}^{\infty} \cup_{\xi\in \bV} \Lambda_{0, T}^{0, \overline{k}_n;\xi}(\bV)$ in $\Lambda_T^0(\bH)$ and respectively $\cup_{n=1}^{\infty} \cup_{\xi\in \bV} \Lambda_{0, T}^{0, \underline{k}_n;\xi}(\bV)$ in $\Lambda_T^0(\bH)$, we have
\begin{equation*}
\lim_{\delta\to 0,d\to\infty}\underline{u}(t,x_t^d) = V(t, x_t) = \lim_{\delta\to 0,d\to\infty}\overline{u}(t, x_t^d) \text{ a.s.,}
\end{equation*}
for all choice of $t\in[0, T]$ and $x_t\in\Lambda_{t}^0(\bH)$.

\end{proof}

\begin{rmk}\label{rmk-superparab}
The above proof is inspired by but different from the conventional Perron's method, for instance, in \cite{buckdahn2015pathwise, ekren2016viscosity1, qiu2018viscosity}, and extending the space from $\mathbb{R}^d$ into infinite dimensional Hilbert space does bring up tremendous challenges. The main difference is two-folded. For one thing, in an infinite dimensional set up, we need to carefully ensure the compactness of the state space because of the possibly unbounded operator $A: \bV\to\bV$ or $\bV\to\bH$. The operator $A$ is only bounded $\bV\to\bV^*$; for the other, finite dimensional approximation would be applied to prove the uniqueness of the viscosity solution. On the other hand,  by enlarging the original filtered probability space with an independent Brownian motion $B$,  we have actually constructed the regular approximations of the value function $V$ with a regular perturbation induced by $\delta  B$, which corresponds to approximation to the optimization \eqref{Control-probm}-\eqref{state-proces-contrl} with piecewise Markovian stochastic controls. Such approximation seems interesting even for the case where all the coefficients $\beta,\,f,$ and $G$ are just deterministic and path-dependent.
\end{rmk}

\section{Appendix}
\subsection{Proof of Lemma \ref{lemma3.1}}
\begin{proof}
(i) comes naturally with the uniqueness of the solution of (\ref{state-proces-contrl}). As of (ii), for each $\tau,l\in [r,T]$, It\^o's formula of square norms yields that
\begin{align*}
&\| X^{r,\xi;\theta}(\tau) \|_{\bH}^2 \\
&= \| X^{r,\xi;\theta}(r) \|_{\bH}^2 + \int_r^{\tau} \! 2{_{\bV^*}\langle} AX^{r,\xi;\theta}(s), X^{r,\xi;\theta}(s) \rangle_{\bV} ds + \int_r^{\tau} \! 2{_{\bV^*}\langle} \beta(s, X_s^{r,\xi;\theta},\theta(s)), X^{r,\xi;\theta}(s) \rangle_{\bV} ds
\\
&\le \|\xi\|_{0,\bH}^2 + \int_r^{\tau} \! c_1 \|X^{r,\xi;\theta}(s)\|_{\bH}^2 - c_2 \|X^{r,\xi;\theta}(s)\|_{\bV}^2 ds + 2 \int_r^{\tau} \! \|\beta(s,X_s^{r,\xi;\theta},\theta(s))\|_{\bH} \|X^{r,\xi;\theta}(s)\|_{\bH} ds.
\end{align*}
It follows that
\begin{align*}
&\| X_T^{r,\xi;\theta}(\tau) \|_{\bH}^2 + \int_r^{\tau} \! c_2 \cdot \|X^{r,\xi;\theta}(s)\|_{\bV}^2 ds
\\
\le &\|\xi\|_{0,\bH}^2 + \int_r^{\tau} \! c_1^+ \cdot \|X^{r,\xi;\theta}(s)\|_{\bH}^2 ds + L \int_r^{\tau} \! 2\|X^{r,\xi;\theta}(s)\|_{\bH} ds
\\
\le &\|\xi\|_{0,\bH}^2 + \int_r^{\tau} \! c_1^+ \cdot \|X^{r,\xi;\theta}(s)\|_{\bH}^2 ds + L \int_r^{\tau} \! \left(1 + \|X^{r,\xi;\theta}(s)\|_{\bH}^2\right) ds
\\
\le &\|\xi\|_{0,\bH}^2 + LT + (L + c_1^+)\int_r^{\tau} \! \max_{q\in [r,s]} \|X^{r,\xi;\theta}(q)\|_{\bH}^2 + \left(\int_r^s \! c_2 \left\| X^{r,\xi;\theta}(q) \right\|_{\bV}^2 dq\right) ds.
\end{align*}
Then by Gronwall's inequality, for some $K_1 = \max\{2, 2LT\}$, $K_1, K>0$, $K^2 = K_1 e^{2(L+c_1^+)T}$ we have
\begin{align*}
&\max_{l\in [r,T]} \| X^{r,\xi;\theta}(l) \|_{\bH}^2 + \int_r^{T} \! c_2 \cdot \|X^{r,\xi;\theta}(s)\|_{\bV}^2 ds
\\
\le &2\left[ \|\xi\|_{0,\bH}^2 + LT \right]e^{\int_r^{T} \! 2(L+c_1^+) ds}
\\
\le &K_1 e^{2(L+c_1^+)(T-r)} (1+\|\xi\|_{0,\bH}^2)
\\
\le &K_1 e^{2(L+c_1^+)T} (1+\|\xi\|_{0,\bH}^2)
\\
= &K^2(1 + \|\xi\|_{0,\bH}^2).
\end{align*}

For (iii), we use the similar approach as in (ii). By definition
\begin{equation*}
d_{0,\bV^*}(X_s^{r,\xi;\theta}, X_t^{r,\xi;\theta}) = |s-t|^{\frac{1}{2}} + \sup_{\tau\in [t,s]} \left\| X_s^{r,\xi;\theta}(\tau) - X_{t,s-t}^{r,\xi;\theta}(t) \right\|_{\bV^*}.
\end{equation*}
And we know that for $0\le r\le t\le \tau \le s\le T$,
\begin{align*}
X_t^{r,\xi;\theta}(t) = &X_t^{r,\xi;\theta}(r) + \int_r^t \! AX^{r,\xi;\theta}(p) + \beta(p,X_p^{r,\xi;\theta},\theta(p)) dp
\\
= &\xi(r) + \int_r^t \! AX^{r,\xi;\theta}(p) + \beta(p,X_p^{r,\xi;\theta},\theta(p)) dp,
\end{align*}
\begin{align*}
X_{\tau}^{r,\xi;\theta}(\tau) = &X_{\tau}^{r,\xi;\theta}(r) + \int_r^{\tau} \! AX^{r,\xi;\theta}(p) + \beta(p,X_p^{r,\xi;\theta},\theta(p)) dp
\\
= &\xi(r) + \int_r^{\tau} \! AX^{r,\xi;\theta}(p) + \beta(p,X_p^{r,\xi;\theta},\theta(p)) dp.
\end{align*}
It follows that
\begin{equation*}
X_s^{r,\xi;\theta}(\tau) - X_{t,s-t}^{r,\xi;\theta}(t) = X^{r,\xi;\theta}(\tau) - X^{r,\xi;\theta}(t) = \int_t^{\tau} \! AX^{r,\xi;\theta}(p) + \beta(p,X_p^{r,\xi;\theta},\theta(p)) dp.
\end{equation*}

Let $\tilde{K} = \left(2Tc^2L^2 + \frac{2c_3^2}{c_2}K^2\right)^{\frac{1}{2}}$. Then by H\"older's inequality and (ii), it holds that 
\begin{align*}
&\left\| X_s^{r,\xi;\theta}(\tau) - X_{t,s-t}^{r,\xi;\theta}(t) \right\|_{\bV^*}
\\
\le &\int_t^{\tau} \! \left\| AX^{r,\xi;\theta}(p) + \beta(p,X_p^{r,\xi;\theta},\theta(p)) \right\|_{\bV^*} dp
\\
\le &\left(\int_t^{\tau} \! \left\| AX^{r,\xi;\theta}(p) + \beta(p,X_p^{r,\xi;\theta},\theta(p)) \right\|_{\bV^*}^2 dp\right)^{\frac{1}{2}}\cdot |\tau - t|^{\frac{1}{2}}
\\
\le &\text{ }|s - t|^{\frac{1}{2}} \left(\int_t^{\tau} \! 2\left\| AX^{r,\xi;\theta}(p) \right\|_{\bV^*}^2 + 2\left\| \beta(p,X_p^{r,\xi;\theta},\theta(p)) \right\|_{\bV^*}^2 dp\right)^{\frac{1}{2}}
\\
\le &\text{ }|s - t|^{\frac{1}{2}} \left(\int_t^{\tau} \! 2c_3^2 \left\| X^{r,\xi;\theta}(p) \right\|_{\bV}^2 + 2c^2 \cdot L^2 \text{ }dp\right)^{\frac{1}{2}}
\\
\le &\text{ }|s - t|^{\frac{1}{2}} \left( 2c^2 L^2 |s-t| + \frac{2c_3^2}{c_2}K^2\left(1 + \|\xi\|_{0,\bH}^2\right) \right)^{\frac{1}{2}}
\\
\le &\text{ }|s - t|^{\frac{1}{2}} \left( 2Tc^2L^2 + \frac{2c_3^2}{c_2}K^2\right)^{\frac{1}{2}} \left(1 + \|\xi\|_{0,\bH}\right) 
\\
= &\text{ }\tilde{K}\left(1 + \|\xi\|_{0,\bH}\right) |s-t|^{\frac{1}{2}}.
\end{align*}
Letting $\overline{K} = 1+ \tilde{K}$, we have
\begin{align*}
d_{0,\bV^*} (X_s^{r,\xi;\theta}, X_t^{r,\xi;\theta}) \le &\tilde{K}\left(1 + \|\xi\|_{0,\bH}\right) |s-t|^{\frac{1}{2}} + |s-t|^{\frac{1}{2}}
\\
\le &\text{ }\overline{K}\left(1 + \|\xi\|_{0,\bH}\right) |s-t|^{\frac{1}{2}}.
\end{align*}
Here $\overline{K}$ depends on $c, c_2, c_3, T, L$, and is independent of the control process $\theta$.

For (iv), given another $\hat{\xi}\in L^0(\Omega,\sF_r; \Lambda_r(\bH))$, by similar approach, the It\^o's formula of square norms implies that
\begin{align*}
&\| X^{r,\xi;\theta}(l) - X^{r,\hat{\xi};\theta}(l) \|_{\bH}^2
\\
= &\|\xi(r) - \hat{\xi}(r)\|_{\bH}^2 + \int_r^l \! 2\langle A( X^{r,\xi;\theta}(s) - X^{r,\hat{\xi};\theta}(s) ), X^{r,\xi;\theta}(s) - X^{r,\hat{\xi};\theta}(s) \rangle ds
\\
&+ \int_r^l \! 2\langle \beta(s, X_s^{r,\xi;\theta},\theta(s)) - \beta(s, X_s^{r,\hat{\xi};\theta},\theta(s)), X^{r,\xi;\theta}(s) - X^{r,\hat{\xi};\theta}(s) \rangle ds
\\
\le &\|\xi - \hat{\xi}\|_{0,\bH}^2 + \int_r^l \! c_1\| X^{r,\xi;\theta}(s) - X^{r,\hat{\xi};\theta}(s) \|_{\bH}^2 - c_2\| X^{r,\xi;\theta}(s) - X^{r,\hat{\xi};\theta}(s) \|_{\bV}^2 ds
\\
&+ 2\int_r^l \! \|\beta(s, X_s^{r,\xi;\theta},\theta(s)) - \beta(s, X_s^{r,\hat{\xi};\theta},\theta(s))\|_{\bV^*} \cdot \|X^{r,\xi;\theta}(s) - X^{r,\hat{\xi};\theta}(s)\|_{\bV} ds
\\
\le &\|\xi - \hat{\xi}\|_{0,\bH}^2 + \int_r^l \! c_1\| X^{r,\xi;\theta}(s) - X^{r,\hat{\xi};\theta}(s) \|_{\bH}^2 - c_2\| X^{r,\xi;\theta}(s) - X^{r,\hat{\xi};\theta}(s) \|_{\bV}^2 ds
\\
&+ \frac{2}{c_2}\int_r^l \! \|\beta(s, X_s^{r,\xi;\theta},\theta(s)) - \beta(s, X_s^{r,\hat{\xi};\theta},\theta(s))\|_{\bV^*}^2 ds + \frac{c_2}{2} \int_r^l \!\|X^{r,\xi;\theta}(s) - X^{r,\hat{\xi};\theta}(s)\|_{\bV}^2 ds
\\
\le &\|\xi - \hat{\xi}\|_{0,\bH}^2 + \int_r^l \! c_1\| X^{r,\xi;\theta}(s) - X^{r,\hat{\xi};\theta}(s) \|_{\bH}^2 - \frac{c_2}{2}\| X^{r,\xi;\theta}(s) - X^{r,\hat{\xi};\theta}(s) \|_{\bV}^2 ds
\\
&+ \frac{2L^2}{c_2}\int_r^l \! \|X_s^{r,\xi;\theta} - X_s^{r,\hat{\xi};\theta}\|_{0,\bH}^2  ds.
\end{align*}
Then it holds that
\begin{align*}
&\| X^{r,\xi;\theta}(l) - X^{r,\hat{\xi};\theta}(l) \|_{\bH}^2 + \frac{c_2}{2} \int_r^l \! \| X^{r,\xi;\theta}(s) - X^{r,\hat{\xi};\theta}(s) \|_{\bV}^2 ds
\\
\le &\|\xi - \hat{\xi}\|_{0,\bH}^2 + \int_r^l \! \left(c_1+\frac{2L^2}{c_2}\right)\max_{\tau\in [0,s]} \| X^{r,\xi;\theta}(\tau) - X^{r,\hat{\xi};\theta}(\tau) \|_{\bH}^2 ds
\\
\le &\|\xi - \hat{\xi}\|_{0,\bH}^2 + \left(\frac{2L^2}{c_2} + c_1^+\right) \int_r^l \! \max_{\tau\in [0,s]} \| X^{r,\xi;\theta}(s) - X^{r,\hat{\xi};\theta}(s) \|_{\bH}^2 ds
\end{align*}
Setting $K = \sqrt{3}e^{\frac{3}{2}T(\frac{2L^2}{c_2} + c_1^+)}$, by Gr\"onwall's inequality we have
\begin{align*}
\max_{l\in [r,T]} \| X^{r,\xi;\theta}(l) - X^{r,\hat{\xi};\theta}(l) \|_{\bH}^2 + c_2 \int_r^T \! \| X^{r,\xi;\theta}(s) - X^{r,\hat{\xi};\theta}(s) \|_{\bV}^2 ds\le &K^2 \|\xi - \hat{\xi}\|_{0,\bH}^2.
\end{align*}
We obtain (iv). 

For (v), letting $0\le r\le l\le s\le T$, we have
\begin{equation*}
X^{r,\xi_r;\theta}(l) - \xi(r) = \int_r^l \! \left[ AX^{r,\xi_r;\theta}(\tau) + \beta(\tau,X_{\tau}^{r,\xi_r;\theta},\theta(\tau)) \right] d\tau.
\end{equation*}
Applying It\^o's formula gives
\begin{align*}
&\left\| X^{r,\xi_r;\theta}(l) - \xi(r) \right\|_{\bH}^2 
\\
= &\int_r^l \! 2{_{\bV^*}\langle} AX^{r,\xi_r;\theta}(\tau) + \beta(\tau,X_{\tau}^{r,\xi_e;\theta},\theta(\tau)), X^{r,\xi_r;\theta}(\tau) - \xi(r) \rangle_{\bV} d\tau
\\
\le &\int_r^l \! 2{_{\bV^*}\langle} A\left(X^{r,\xi_r;\theta}(\tau)-\xi(r)\right), X^{r,\xi_r;\theta}(\tau) - \xi(r) \rangle_{\bV} d\tau + \int_r^l \! 2{_{\bV^*}\langle} A\xi(r), X^{r,\xi_r;\theta}(\tau) - \xi(r) \rangle_{\bV} d\tau
\\
&+ 2L\int_r^l \! \| X^{r,\xi_r;\theta}(\tau) - \xi(r) \|_{\bH} d\tau
\\
\le &\int_r^l \! c_1\| X^{r,\xi_r;\theta}(\tau) - \xi(r) \|_{\bH}^2 d\tau - \int_r^l \! c_2\| X^{r,\xi_r;\theta}(\tau) - \xi(r) \|_{\bV}^2 d\tau 
\\
&+ 2\int_r^l \! \|A\xi(r)\|_{\bH} \cdot \|X^{r,\xi_r;\theta}(\tau) - \xi(r)\|_{\bH} d\tau + 2L\max_{\tau\in [r,l]} \left\| X^{r,\xi_r;\theta}(\tau) - \xi(r) \right\|_{\bH} \cdot |l-r|
\\
\le &\int_r^l \! c_1^+\| X^{r,\xi_r;\theta}(\tau) - \xi(r) \|_{\bH}^2 d\tau - \int_r^l \! c_2\| X^{r,\xi_r;\theta}(\tau) - \xi(r) \|_{\bV}^2 d\tau 
\\
&+ 2 \|A\xi(r)\|_{\bH}\cdot|l-r| \max_{\tau\in [r,l]} \|X^{r,\xi_r;\theta}(\tau) - \xi(r)\|_{\bH} + \frac{1}{4}\max_{\tau\in [r,l]} \|X^{r,\xi_r;\theta}(\tau) - \xi(r)\|_{\bH}^2 + 4L^2|l-r|^2
\\
\le &\int_r^l \! c_1^+\| X^{r,\xi_r;\theta}(\tau) - \xi(r) \|_{\bH}^2 d\tau - \int_r^l \! c_2\| X^{r,\xi_r;\theta}(\tau) - \xi(r) \|_{\bV}^2 d\tau + \frac{1}{2} \max_{\tau\in [r,l]} \left\| X^{r,\xi_r;\theta}(\tau) - \xi(r) \right\|_{\bH}^2 
\\
&+ |l-r|^2 \left( 4L^2 + 4\|A\xi(r)\|_{\bH}^2 \right) .
\end{align*}
Putting $\tilde{K}:= \max\left\{ 8L^2, 8 \right\}$, we have
\begin{align*}
&\frac{1}{2}\max_{\tau\in [r,l]}\left\| X^{r,\xi_r;\theta}(\tau) - \xi(r) \right\|_{\bH}^2 + c_2 \int_r^l \! \| X^{r,\xi_r;\theta}(\tau) - \xi(r) \|_{\bV}^2 d\tau
\\
\le &\int_r^l \! 2c_1^+\| X^{r,\xi_r;\theta}(\tau) - \xi(r) \|_{\bH}^2 d\tau + \tilde{K}(1+\|A\xi(r)\|_{\bH}^2) \cdot |l-r|^2.
\end{align*}
By Gr\"onwall's inequality, it holds that
\begin{align*}
\frac{1}{2}\max_{l\in [r,s]} \left\| X^{r,\xi_r;\theta}(l) - \xi(r) \right\|_{\bH}^2 + c_2 \int_r^s \! \| X^{r,\xi_r;\theta}(l) - \xi(r) \|_{\bV}^2 dl \le \tilde{K}(1+\|A\xi(r)\|_{\bH}^2) \cdot |s-r|^2 \cdot e^{2c_1^+T},
\end{align*}
from which we obtain (v).

Clearly the constants $K$ above only depends on $c_1^+$, $c_2$, $c_3$, $L$ and $T$. It is not dependent on the control process $\theta$.
\end{proof}

\subsection{Proof of Proposition \ref{prop3.2}}
\begin{proof}
We follow a similar approach as in \cite{qiu2017weak}. (v) follows directly from Assumption $(\cA 1)$ and (iv) of Lemma \ref{lemma3.1} since the coefficients $f(t, X_t, \theta)$ and $G(X_T)$ are uniformly Lipschitz continuous in $\Lambda_t(\bH)$ and in $\Lambda_T(\bH)$, respectively.

For each $(t,\xi,\overline{\theta})\in [0,T]\times L^0(\Omega,\sF_t;\Lambda_t(H)) \times \cU$, we denote
\begin{equation*}
\mathbb{J}(t,\xi,\overline{\theta}) := \left\{ J(t,\xi,\theta): \theta\in\cU, J(t,\xi,\theta)\le J(t,\xi,\overline{\theta}) \right\}.
\end{equation*}
Clearly this set is non-empty since it contains at least one element $J(t,\xi,\overline{\theta})$. And $J(t,\xi,\theta)$ is uniformly bounded since the coefficients $f$, $G$ are uniformly bounded under Assumption $(\cA 1)$. Then by \cite[Theorem A.3]{karatzas1998methods}, $V(t,\xi) = \essinf_{\theta\in\cU} J(t,\xi,\theta)$ exists.

For any $J(t,\xi,\tilde{\theta}), J(t,\xi,\hat{\theta})\in \mathbb{J}(t,\xi,\overline{\theta})$, we construct a control process
\begin{equation*}
\underline{\theta}(s) = \left[ \tilde{\theta}\cdot1_{ J(t,\xi,\tilde{\theta}) \le J(t,\xi,\hat{\theta}) } + \hat{\theta}\cdot 1_{ J(t,\xi,\tilde{\theta}) > J(t,\xi,\hat{\theta})} \right] \cdot 1_{[t,T]}(s) + \overline{\theta}(s) \cdot 1_{[0,t)}(s).
\end{equation*}
Clearly $\underline{\theta}$ takes values in $U$ and is $(\sF_t)_{t\in [0,T]}$-adapted. This shows that $\mathbb{J}(t,\xi,\overline{\theta})$ is closed under pairwise minimization for each $t\in [0,T]$, $\xi\in L^0(\Omega,\sF_t;\Lambda_t(\bH))$, $\overline{\theta}\in\cU$. Thus there exists a sequence of controls $\{\theta_n\}_{n\in\mathbb{Z}^+}$ such that $J(t,\xi,\theta_n)$ converges decreasingly to $V(t,\xi)$ with probability 1. Then (i) follows naturally.

For each $(\theta,x_0)\in \cU\times \bH$, and any $0\le t\le \tilde{t}\le T$, by dominated convergence theorem, we have
\begin{align*}
&E_{\sF_t} V\left(\tilde{t},X_{\tilde{t}}^{0,x_0;\theta}\right) + E_{\sF_t} \int_t^{\tilde{t}} \! f\left(s,X_s^{0,x_0;\theta},\theta(s)\right) ds
\\
= &\text{ }E_{\sF_t} \lim_{n\to\infty} J\left(\tilde{t},X_{\tilde{t}}^{0,x_0;\theta},\theta_n\right) + E_{\sF_t} \int_t^{\tilde{t}} \! f\left(s,X_s^{0,x_0;\theta},\theta(s)\right) ds
\\
= &\lim_{n\to\infty} E_{\sF_t} J\left(\tilde{t},X_{\tilde{t}}^{0,x_0;\theta},\theta_n\right) + E_{\sF_t} \int_t^{\tilde{t}} \! f\left(s,X_s^{0,x_0;\theta},\theta(s)\right) ds
\\
= &\lim_{n\to\infty} E_{\sF_t} \left[ \int_{\tilde{t}}^T \! f\left(s, X_s^{\tilde{t}, X_{\tilde{t}}^{0,x_0;\theta};\theta_n}, \theta_n(s)\right) ds + G\left(X_T^{\tilde{t},X_{\tilde{t}}^{0,x_0;\theta};\theta_n}\right) + \int_t^{\tilde{t}} \! f\left(s,X_s^{0,x_0;\theta},\theta(s)\right) ds \right]
\\
\ge &\text{ ess}\inf_{\theta^*\in\cU} E_{\sF_t} \left[ \int_t^T \! f\left(s,X_s^{t,X_t^{0,x_0;\theta};\theta^*},\theta^*(s)\right) ds + G\left(X_T^{t,X_{t}^{0,x_0;\theta};\theta^*}\right) \right]
\\
= &V\left(t,X_t^{0,x_0;\theta}\right),
\end{align*}
where $\theta^*\in\cU$ is the optimal control process. Thus (ii) is proved.

For (iii), with any $0\le t\le\tilde{t}\le T$, $\theta,\tilde{\theta}\in\cU$, $X_t^{0,x_0;\theta}\in\Lambda_t(\bH), X_{\tilde{t}}^{0,x_0;\theta}\in\Lambda_{\tilde{t}}(\bH)$, we have
\begin{align*}
&E_{\sF_t} V\left(\tilde{t}, X_{\tilde{t}}^{0,x_0;\tilde{\theta}}\right)
\\
= &E_{\sF_t} \left[ \essinf_{\theta\in\cU} E_{\sF_{\tilde{t}}} \left[ \int_{\tilde{t}}^T \! f\left(s,X_s^{\tilde{t},X_{\tilde{t}}^{0,x_0;\tilde{\theta}};\theta}, \theta(s)\right)ds + G\left(X_T^{\tilde{t},X_{\tilde{t}}^{0,x_0;\tilde{\theta}};\theta}\right) \right] \right]
\\
\le &\essinf_{\theta\in\cU} E_{\sF_t}  \left[ \int_{\tilde{t}}^T \! f\left(s,X_s^{\tilde{t},X_{\tilde{t}}^{0,x_0;\tilde{\theta}};\theta}, \theta(s)\right)ds + G\left(X_T^{\tilde{t},X_{\tilde{t}}^{0,x_0;\tilde{\theta}};\theta}\right) \right].
\end{align*}
Then by (ii), assumption $(\cA 1)$, and Lemma \ref{lemma3.1} (i), (iv), we have
\begin{align*}
&L|\tilde{t} - t|
\\
= &E_{\sF_t} \int_t^{\tilde{t}} \! L ds
\\
\ge &E_{\sF_t} \int_t^{\tilde{t}} \! f\left(s,X_s^{0,x_0;\tilde{\theta}},\theta(s)\right) ds
\\
\ge &V(t, X_t^{0,x_0;\tilde{\theta}}) - E_{\sF_t} V\left(\tilde{t}, X_{\tilde{t}}^{0,x_0;\tilde{\theta}}\right)
\\
\ge &\essinf_{\theta\in\cU}E_{\sF_t} \left[ \int_t^T \! f\left(s,X_s^{t,X_{t}^{0,x_0;\tilde{\theta}},\theta}, \theta(s)\right)ds + G\left(X_T^{t,X_{t}^{0,x_0;\tilde{\theta}},\theta}\right) \right]
\\
&- \essinf_{\theta\in\cU}E_{\sF_t} \left[ \int_{\tilde{t}}^T \! f\left(s,X_s^{\tilde{t},X_{\tilde{t}}^{0,x_0;\tilde{\theta}};\theta}, \theta(s)\right)ds + G\left(X_T^{\tilde{t},X_{\tilde{t}}^{0,x_0;\tilde{\theta}};\theta}\right) \right]
\\
\ge &\essinf_{\theta\in\cU}E_{\sF_t} \Bigg[  \int_{\tilde{t}}^T \! f\left(s,X_s^{t,X_{t}^{0,x_0;\tilde{\theta}};\theta}, \theta(s)\right) - f\left(s,X_s^{\tilde{t},X_{\tilde{t}}^{0,x_0;\tilde{\theta}};\theta}, \theta(s)\right)ds
\\
&\text{ }\text{ }\text{ }\text{ }\text{ }\text{ }\text{ }\text{ }\text{ }\text{ }\text{ }\text{ }\text{ }\text{ }\text{ }+ \int_t^{\tilde{t}} \! f\left(s,X_s^{t,X_{t}^{0,x_0;\tilde{\theta}};\theta}, \theta(s)\right)ds + G\left(X_T^{t,X_{t}^{0,x_0;\tilde{\theta}};\theta}\right) - G\left(X_T^{\tilde{t},X_{\tilde{t}}^{0,x_0;\tilde{\theta}};\theta}\right) \Bigg]
\\
\ge &\essinf_{\theta\in\cU}E_{\sF_t} \Bigg[ \int_t^{\tilde{t}} \! -L ds + \int_{\tilde{t}}^T \! (-L) \left\| X_s^{t, X_t^{0,x_0;\tilde{\theta}};\theta} - X_s^{\tilde{t}, X_{\tilde{t}}^{0,x_0;\tilde{\theta}};\theta} \right\|_{0,\bH} ds
\\
&\text{ }\text{ }\text{ }\text{ }\text{ }\text{ }\text{ }\text{ }\text{ }\text{ }\text{ }\text{ }\text{ }\text{ }\text{ } - L \left\| X_T^{t, X_t^{0,x_0;\tilde{\theta}};\theta} - X_T^{\tilde{t}, X_{\tilde{t}}^{0,x_0;\tilde{\theta}};\theta} \right\|_{0,\bH} \Bigg]
\\
\ge &- \esssup_{\theta\in\cU} \left[ L|\tilde{t} - t| + (L + L |T - \tilde{t}|)\left\| X_T^{t, X_t^{0,x_0;\tilde{\theta}};\theta} - X_T^{\tilde{t}, X_{\tilde{t}}^{0,x_0;\tilde{\theta}};\theta} \right\|_{0,\bH}  \right]
\\
= &- \esssup_{\theta\in\cU} \left[ L|\tilde{t} - t| + (L + L |T - \tilde{t}|)\left\| X_T^{\tilde{t}, X_{\tilde{t}}^{t,X_t^{0,x_0;\tilde{\theta}};\theta}; \theta} - X_T^{\tilde{t}, X_{\tilde{t}}^{t,X_t^{0,x_0;\tilde{\theta}};\tilde{\theta}}; \theta} \right\|_{0,\bH}  \right]
\\
\ge &- \esssup_{\theta\in\cU} \left[ L|\tilde{t} - t| + K(L + L |T - \tilde{t}|)\left\| X_{\tilde{t}}^{t,X_t^{0,x_0;\tilde{\theta}};\theta} - X_{\tilde{t}}^{t,X_t^{0,x_0;\tilde{\theta}};\tilde{\theta}} \right\|_{0,\bH}  \right].
\\
\end{align*}
Let $\xi = X_t^{0,x_0;\tilde{\theta}}$. Then for any $0\le t\le s\le \tilde{t}$, by Ito's formula and H\"older's inequality,
\begin{align*}
&\left\| X_{\tilde{t}}^{t,\xi;\theta}(s) - X_{\tilde{t}}^{t,\xi;\tilde{\theta}}(s) \right\|_{\bH}^2
\\
= &2\int_t^s \! \left\langle A\left( X_{\tilde{t}}^{t,\xi;\theta}(r) - X_{\tilde{t}}^{t,\xi;\tilde{\theta}}(r) \right), X_{\tilde{t}}^{t,\xi;\theta}(r) - X_{\tilde{t}}^{t,\xi;\tilde{\theta}}(r) \right\rangle dr
\\
& + 2\int_t^s \! \left\langle \beta\left( r,X_r^{t,\xi;\theta}, \theta(r) \right) - \beta\left( r,X_r^{t,\xi;\tilde{\theta}}, \tilde{\theta}(r) \right), X_{\tilde{t}}^{t,\xi;\theta}(r) - X_{\tilde{t}}^{t,\xi;\tilde{\theta}}(r) \right\rangle dr
\\
\le &c_1 \int_t^s \! \left\| X_{\tilde{t}}^{t,\xi;\theta}(r) - X_{\tilde{t}}^{t,\xi;\tilde{\theta}}(r) \right\|_{\bH}^2 dr - c_2 \int_t^s \! \left\| X_{\tilde{t}}^{t,\xi;\theta}(r) - X_{\tilde{t}}^{t,\xi;\tilde{\theta}}(r) \right\|_{\bV}^2 dr
\\
& + 4L \int_t^s \! \left\| X_{\tilde{t}}^{t,\xi;\theta}(r) - X_{\tilde{t}}^{t,\xi;\tilde{\theta}}(r) \right\|_{\bH} dr
\\
\le &2L|s-t| + (c_1^+ + 2L) \int_t^s \! \left\| X_{\tilde{t}}^{t,\xi;\theta}(r) - X_{\tilde{t}}^{t,\xi;\tilde{\theta}}(r) \right\|_{\bH}^2 dr - c_2 \int_t^s \! \left\| X_{\tilde{t}}^{t,\xi;\theta}(r) - X_{\tilde{t}}^{t,\xi;\tilde{\theta}}(r) \right\|_{\bV}^2 dr
\end{align*}
Thus denoting $\overline{K} = 4L \cdot e^{2T(c_1^+ + 2L)}$, by Gr\"onwall's inequality, we have
\begin{align*}
&\max_{s\in [t,\tilde{t}]}\left\| X_{\tilde{t}}^{t,\xi;\theta}(s) - X_{\tilde{t}}^{t,\xi;\tilde{\theta}}(s) \right\|_{\bH}^2 + c_2 \int_t^{\tilde{t}} \! \left\| X_{\tilde{t}}^{t,\xi;\theta}(r) - X_{\tilde{t}}^{t,\xi;\tilde{\theta}}(r) \right\|_{\bV}^2 dr
\\
\le& 4L|\tilde{t} - t|\cdot e^{2\left(c_1^+ + 2L \right)T}
\\
\le&\overline{K} |\tilde{t}-t|.
\end{align*}
Letting $\tilde{K} = K\cdot \overline{K}^{\frac{1}{2}}$, we have
\begin{align*}
&L| \tilde{t} - t |
\\
\ge &V(t, X_t^{0,x_0;\tilde{\theta}}) - E_{\sF_t} V\left(\tilde{t}, X_{\tilde{t}}^{0,x_0;\tilde{\theta}}\right)
\\
\ge &- \esssup_{\theta\in\cU} \left[ L|\tilde{t} - t| + K(L + L |T - \tilde{t}|)\left\| X_{\tilde{t}}^{t,X_t^{0,x_0;\tilde{\theta}};\theta} - X_{\tilde{t}}^{t,X_t^{0,x_0;\tilde{\theta}};\tilde{\theta}} \right\|_{0,\bH}  \right]
\\
\ge &- \esssup_{\theta\in\cU} \left[ L|\tilde{t} - t| + \tilde{K}(L + L |T - \tilde{t}|)|\tilde{t} - t|^{\frac{1}{2}}  \right]
\\
\to &\text{ }0, \text{ }\text{ as } | \tilde{t} - t |\to 0.
\end{align*}

Thus by taking expectation on each side of the above inequality, we obtain the time-continuity of $E \left[V\left(t,X_t^{0,x_0;\theta}\right)\right]$. This result, along with the regularity of super-martingale, implies the right continuity of $V(t,X_t^{0,x_0;\theta})$. And by the backward stochastic differential equation theory, $E_{\sF_t} V\left(\tilde{t}, X_{\tilde{t}}^{0,x_0;\theta}\right)$ is continuous in $t\in [0,\tilde{t}]$. Combined with the obvious fact that $|\tilde{t} - t|$ is continuous in $t$, the left time-continuity of $V(t,X_t^{0,x_0;\theta})$ follows.

For (iv), the joint continuity comes naturally by (iii) and (v). For any $(t,X_t)\in [0,T]\times\Lambda_t(\bH)$,
\begin{align*}
|V(t,X_t)| = &\left| \essinf_{\theta\in\cU} E_{\sF_t} \left[ \int_t^T \! f(s,X_s^{t,X_t;\theta},\theta(s)) ds + G(X_T^{t,X_t;\theta}) \right] \right|
\\
\le &\left| \esssup_{\theta\in\cU} E_{\sF_t} \left[ \int_t^T \! f(s,X_s^{t,X_t;\theta},\theta(s)) ds + G(X_T^{t,X_t;\theta}) \right] \right|
\\
\le &\esssup_{\theta\in\cU} E_{\sF_t} \left[ \int_t^T \! \left|f(s,X_s^{t,X_t;\theta},\theta(s))\right| ds + \left|G(X_T^{t,X_t;\theta})\right| \right]
\\
\le &L(T-t) + L
\\
\le &L(T+1).
\end{align*}
Analogously, we can get $|J(t,X_t,\theta)| \le L(T+1)$ for any $\theta\in\cU$ as well.
\end{proof}

\subsection{Proof of Lemma \ref{lemma3.4}}
\begin{proof}
W.l.o.g., we only need to prove that \eqref{lemma3.4eq} holds for any $\tau\in (0, \underline{t}_1)$, $\rho=0$ and $x_0=x\in \bV$. Then for each $N\in\mathbb{N}^{+}$, $N>2$, let $t_i=\frac{i\tau}{N}$ with $i=0, ..., N$. In this way we obtain a partition of $[0, \tau]$ with $0=t_0<t_1<...<t_N=\tau$. And notice that here we have two partitions: partition of $[0, T]$ with $\underline{t}_j$, $j=0,1,\cdots,n$, and partition of $[0, \tau]$ with $t_i$, $i=0,1,\cdots,N$.
For each $\theta\in\mathcal{U}$, let
\begin{equation*}
^{N}X(t) = \sum_{i=0}^{N-1} X^{0, x; \theta}(t_i)1_{[t_i, t_{i+1})}(t)+X^{0, x; \theta}(\tau)1_{\{ \tau \}}(t), t\in [0, \tau]
\end{equation*}
\begin{equation*}
^{N}X_{t^{-}}(s)={^{N}X}(s)1_{[0, t)}(s)+\lim_{r\to t^{-}} {^{N}X}(r)1_{\{ t \}}(s), 0\le s\le t\le r .
\end{equation*}
Easily for any $t\in [0,\tau]$, we have the approximation hold as following:
\begin{equation*}
\lim_{N\to\infty} \left(\left\| X_t^{0, x; \theta} - {^{N}X}_t \right\|_{0,\bV^*} + \left\| X_t^{0, x; \theta} - {^{N}X}_{t^{-}} \right\|_{0,\bV^*}\right)=0.
\end{equation*}
 For each $i\in {0, ..., N-1}$, we have
 \begin{align*}
&\text{ }\text{ }\text{ }\text{ } u(t_{i+1}, {^{N}X}_{t_{i+1}})-u(t_i, {^{N}X}_{t_i})
\\
&=u\left(t_{i+1}, {^{N}X}_{t_{{i+1}^{-}}}\right)-u\left(t_i, {^{N}X}_{t_i}\right)+ u\left(t_{i+1}, {^{N}X}_{t_{i+1}}\right)-u\left(t_{i+1}, {^{N}X}_{t_{{i+1}^{-}}}\right) 
\\
:&= I_1^i + I_2^i,
 \end{align*}
 where by $u\in\mathcal{C}^1_{\sF}$, and Definition \ref{cf1} $(i)$,
 \begin{equation*}
 I_1^i = u\left(t_{i+1}, {^{N}X}_{t_{{i+1}^{-}}}\right)-u\left(t_i, {^{N}X}_{t_i}\right) =\int_{t_i}^{t_{i+1}} \! \mathfrak{d}_s u\left(s, {^{N}X}_{s^{-}}\right)ds + \int_{t_i}^{t_{i+1}} \! \mathfrak{d}_{\omega} u\left(r, {^{N}X}_{r^{-}}\right)dW(r),
 \end{equation*}
 \begin{equation*}
 I_2^i = u\left(t_{i+1}, {^{N}X}_{t_{i+1}}\right)-u\left(t_{i+1}, {^{N}X}_{t_{{i+1}^{-}}}\right).
 \end{equation*}
 For the first term of $I_2^i$, by the definition of vertical perturbation, we have
 \begin{equation*}
 u\left(t_{i+1}, {^{N}X}_{t_{i+1}}\right) = u\left(t_{i+1}, {^{N}X}_{t_{{i+1}^{-}}}^{ ^{N}X(t_{i+1}) - {^{N}X}(t_{{i+1}^{-}}) }\right)  = u\left(t_{i+1}, {^{N}X}_{t_{{i+1}^{-}}}^{ {X}(t_{i+1}) - {X}(t_i) }\right).
  \end{equation*}
Thus
\begin{equation*}
I_2^i = u\left(t_{i+1}, {X}_{t_{{i+1}^{-}}}^{ {X}(t_{i+1}) - {X}(t_i) }\right) - u\left(t_{i+1}, {^{N}X}_{t_{{i+1}^{-}}}\right).
\end{equation*}
Then by the definition of vertical derivative and integration by parts formula,
\begin{equation*}
I_2^i = \int_{t_i}^{t_{i+1}} \!  \nabla u\left(t_{i+1}, {^{N}X}_{t_{{i+1}^{-}}}^{ {X}(s) - {X}(t_i) }\right)\left[ AX^{0,x;\theta}(s) + \beta\left(s, X_s^{0, x; \theta}, \theta(s)\right) \right]ds.
\end{equation*}
Recall that
\begin{equation*}
u\left(\tau, {^{N}X}_{\tau}\right) - u\left(0, x\right) = \sum_{i=0}^{N-1} (I_1^i + I_2^i).
\end{equation*}
Letting $N\to\infty$, by the dominated convergence theorem we complete the proof.

\end{proof}

\bibliographystyle{siam}

\bibliography{ref_qjn}

\end{document}